\numberwithin{equation}{section}
\numberwithin{figure}{section}
\theoremstyle{plain}
 \theoremstyle{remark}
\newtheorem{theorem}{Theorem}[section]
\newtheorem{lemma}[theorem]{Lemma}
\newtheorem{example}[theorem]{Example}
\newtheorem{assumption}[theorem]{Assumption}
\newtheorem{remark}[theorem]{Remark}
\newcommand{\ri}{\mathrm{i}}
\newcommand{\eig}{\mathrm{eig}}
\newcommand{\spec}{\mathrm{spec}}
\newcommand{\diag}{\mathrm{diag}}
\def\A{{\mathcal A}}
\def\C{{\mathcal C}}
\def\O{{\mathcal O}}
\def\U{{\mathcal U}}
\def\eps{\varepsilon}
\def\th{\theta}
\def\hK{{\hat K}}
\def\nat{{\mathbb N}}
\def\real{{\mathbb R}}
\def\complex{{\mathbb C}}
\def\integer{{\mathbb Z}}
\def\circle{{\mathbb S}}
\def\Lip {{ \rm Lip}}
\def\Id {{ \rm Id}}
  \providecommand{\remarkname}{Remark}
\providecommand{\theoremname}{Theorem}
\begin{document}

\title[Perturbation of Lyapunov-Subcenter-Manifolds]{Global Persistence of Lyapunov-Subcenter-Manifolds as Spectral Submanifolds under Dissipative Perturbations}

\begin{abstract}

For a nondegenerate analytic system with a conserved quantity, a classic result by Lyapunov guarantees the existence of an analytic manifold of periodic orbits tangent to any two-dimensional, elliptic eigenspace of a fixed point satisfying nonresonance conditions. These two dimensional manifolds are referred to as Lyapunov Subcenter
Manifolds (LSM).\\
Numerical and experimental observations in the nonlinear vibrations literature suggest that LSM's often persist under autonomous, dissipative perturbations. These perturbed manifolds are useful since they provide information of the asymptotics of the convergence to equilibrium. \\
In this paper, we formulate and prove  precise mathematical results on the persistence of LSMs under dissipation. We show that, for Hamiltonian systems under mild non-degeneracy conditions on the perturbation, for small enough dissipation, there are analytic invariant manifolds of 
the perturbed system  that approximate (in the analytic sense) the LSM in a fixed neighborhood.  We provide examples 
that show that some non-degeneracy conditions on the perturbations are needed 
for the results to hold true. \\
We also study the dependence of the manifolds on the dissipation parameter. If $\eps$ is the dissipation parameter, we show that the manifolds  are real analytic in  $(-\eps_0, \eps_0) \setminus \{0\} $ and $C^\infty $  in  $(-\eps_0, \eps_0)$. We construct explicit asymptotic 
expansions in powers of $\eps$ (which presumably do not converge).\\
Finally, we present applications of our results  to a mechanical systems. 
\end{abstract}

\author[R.de la Llave]{Rafael de la Llave}
\thanks{R.L. Supported in part by NSF grant DMS-1800241} 
\address{School of Mathematics, Georgia Institute of Technology, Atlanta,
Georgia 30332-0160}
\email{rafael.delallave@math.gatech.edu}

\author[F. Kogelbauer]{Florian Kogelbauer}
\address{Institute for Mechanical Systems, ETH Z\"urich, Leonhardstrasse 21,
8092 Z\"{u}rich, Switzerland}
\email{floriank@ethz.ch}

\keywords{Lyapunov-Subcenter-Manifold, Spectral Submanifold, Perturbative Analysis, Singular pertubations, Slow manifolds}


\maketitle

\today


\section{Introduction}

As shown first by Lyapunov \cite[\S 42 p. 376]{liapounoff1907probleme}, given an analytic ODE with a non-degenerate conserved quantity (for example, a Hamiltonian system), near a fixed  point whose linearization contains a pair of complex conjugate imaginary eigenvalues which do not resonate with the other eigenvalues (see precise definition later), we can find a one dimensional family of periodic orbits (hence a two dimensional invariant manifold) tangent at the origin to the eigenspace corresponding to the the pair of imaginary
eigenvalues. \\
One can think of these families of periodic orbits as a nonlinear analogue of the periodic orbits predicted by the linearization, i.e., the harmonic oscillator. These two-dimensional manifolds are often referred to as\textit{ Lyapunov subcenter manifolds} (LSMs) because they are submanifolds of the full center manifold of the fixed point.\\
The proof of \cite{liapounoff1907probleme}, is based on constructing perturbative series starting at the origin and then showing that they converge. The notation of  \cite{liapounoff1907probleme} may  require some effort for modern readers and the formal statement is only for Hamiltonian systems.
See \cite{kelley1967liapounov, MoserZ, MeyerH} or later in this paper for modern proofs based on the elementary implicit function theorem.\\
More modern proofs for Hamiltonian systems based on estimating series can be found in \cite{Moulton20},  \cite[II \S16 p. 104]{SiegelM}
or on normal form theory \cite{kelley1969analytic}. 
The paper \cite{moser1958generalization} contains a generalization of the convergence of the normal forms in the saddle-center case. 
Results (based on variational methods) on  the existence of periodic orbits that do not require 
non-resonance assumptions but assume  positive definiteness on the conserved 
quantity appear in \cite{weinstein73, moser1976periodic}. The papers
\cite{Duistermaat1972, SchmidtS73} use averaging methods to prove generalizations of Lyapunov theorems for resonant systems.
A useful review of these results, along with further material, is in \cite{sijbrand1985properties}. Most of the results above on families of quasi-periodic solutions work for systems with finite 
differentiability.\\
None of the above  results, however, apply  under the addition of the slightestdissipative perturbation. Such small dissipative perturbations are present in applications to  mechanical vibrations. Perturbation arguments based on normal hyperbolicity  for the continuation of LSMs as invariant sets
are also inapplicable, given that LSMs are not normally hyperbolic. The addition of a small dissipation to a system is a very singular 
perturbation since even the smallest perturbation destroys 
all periodic orbits completely.\\
There is, however, a strong indication that remnants of LSMs continue to organize the dynamics of mechanical systems under the
addition of small damping and forcing to their conservative limits in a domain whose size is independent of the perturbation. Specifically, dissipative backbone curves (observed periodic response amplitudes near resonances plotted as a function of an external forcing
frequency) of such systems are virtually indistinguishable from their
conservative backbone curves (amplitudes of periodic orbits in LSMs plotted as a function of their frequency) for small enough damping.
The paper \cite{TOUZE2006958} illustrate this relationship numerically, but also shows that conservative and dissipative backbone
curves start deviating noticeably from each other for larger damping values.\\
An experimental technique, the \emph{force appropriation method} \cite{PEETERS2011486, PEETERS20111227} is directly based on the observation that a periodic orbit on an LSM survives unchanged when an external forcing is selected to cancel out damping exactly along the orbit. The paper \cite{PEETERS2011486, PEETERS20111227} demonstrate that close enough to the fixed point, such a forcing can be approximately constructed for small enough linear damping. This provides an intuitive explanation for the observed closeness of conservative and dissipative backbone curves under small linear damping and harmonic forcing, at least near the unforced equilibrium. \\
Another experimental technique, the \emph{resonance decay method}, cf. \cite{KERSCHEN2006505} and \cite{PEETERS2011486}, also applies periodic external forcing to the lightly damped conservative system, then tunes the forcing frequency to reach a locally maximal amplitude for the system. At this frequency, the forcing is subsequently turned off, and the instantaneous amplitude-frequency diagram of the resulting
decaying oscillations is taken to be as an approximation of the backbone curve of the conservative system. Again, this approach implicitly assumes that after the forcing is turned off, solutions evolve close
to an LSM of the conservative system. \\
Independent of these developments, the recent theory of \emph{spectral submanifolds} (SSMs) offers an extension of the LSM concept to dissipative systems, cf. \cite{Haller2016, Kogelbauer2018, Szalai20160759}. Inspired by the nonlinear normal mode concept of \cite{SHAW1994319} and based on the abstract invariant manifold results of \cite{de1997invariant, 
Cab2003pam,Cab2003,CABRE2005444}, the theory of SSMs guarantees the existence of a unique analytic, two-dimensional invariant manifold tangent to any two-dimensional, nonresonant eigenspace of a linearly asymptotically stable fixed point. Such eigenspaces arise from two-dimensional center subspaces of conservative systems under small dissipative perturbations. The question we address in this paper is whether indeed LSM continue into SSMs.\\
A mathematically trivial (and not very interesting 
physically) theory of persistence of LSMs is to observe that, if the eigenvalue pair corresponding to the LSM,  becomes dissipative, it has to remain 
non-resonant.  Then, we can obtain a SMM  
because of the theory of \cite{de1997invariant, Cab2003, CABRE2005444}. Since the coefficients of the expansion are obtained recursively by algebraic operations, we obtain easily that the Taylor coefficients of the SSM converge to that of the LSM as the dissipation goes to zero.\\
The physical shortcomings of the simple 
theory explained above come from the observation that, since the contraction along the manifold is becoming weaker as the dissipation becomes weaker, the theory of  \cite{de1997invariant, Cab2003,CABRE2005444} only guarantees the existence of a manifold whose size goes to zero as the dissipation vanishes. The physical usefulness of  
manifolds whose  size goes to zero with the dissipation is rather tenuous. See also Remark~\ref{rem:easy}.\\
Therefore, the question we address in this paper is the existence of SMMs whose size is independent of the value of the dissipation parameter and which converges to the LSM in a fixed domain in the sense of convergence of analytic manifolds. We present two results. Assuming that the system is Hamiltonian, the first results shows in great generality that there are asymptotic expansions in powers of the dissipation of invariant manifolds. In the second result,also  for Hamiltonian systems and under some further assumptions in the first order perturbation, we show that there are indeed invariant manifolds of the system which, when the dissipation goes to zero, approximate (in the sense of real analytic manifolds) the LSM in a neighborhood 
of fixed size independent of the dissipation.\\ We will also present examples that show that some version of the non-degeneracy assumptions we make are necessary. This seems to be in accordance to the physical experiments.\\
in the proof, we will see that -- as customary in singular perturbation theory -- one needs to make assumptions on the leading effects of 
the perturbation. In our case, the assumptions are rather mild, but we show examples  that show that without any assumptions, we could have that the domains of the analytic  SMM decreases to zero as the dissipation vanishes (similar results happen for manifolds of 
finite order  regularity). \\
In this paper we will concentrate specially in situations where the unperturbed manifold is not hyperbolic. The perturbed manifolds that arise in our construction will be slow manifolds  that violate the standard rate conditions of Normally Hyperbolic manifolds (NHIM) in \cite{Fenichel74, Mane78} and therefore, their persistence properties are not based on the theory of Normally Hyperbolic  Invariant Manifolds. They use essentially the  fact that the manifolds are attached to a fixed point. The mathematical theory of slow manifolds and, a fortiori, the theory of SSMs, is very subtle.  Before the mathematical theory was settled, several of the complications and puzzlements
\footnote{Many of the puzzles on the theory of the slow manifolds arise from the fact that  some uniqueness results are based on 
long term behavior and others are based  on regularity near the origin. The two conditions used to produce uniqueness lead to different 
manifolds. In particular, the slow  manifolds of \cite{Fraser98,MaasP92} are, generically different since the former is based in smooth expansions at the origin and the later in expansions on asymptotic behavior at infinity. Both of them are unique under the appropriate conditions, which are, however incompatible in many systems. See the discussion of this point in \cite{de1997invariant}.} the difficulties of the theory of slow manifolds  were mapped out lucidly in \cite{lorenz1986existence,lorenz1987nonexistence,lorenz1992slow}.\\
Since we will have to rely on the theory of 
subcenter manifolds, we also refer to the pioneering papers on slow manifolds  \cite{de1995irwin,  irwin1980new} 
and \cite{poschel1986invariant} for invariant submanifolds of center manifold under nonresonance conditions (which are not satisfied by Hamiltonian systems). 
We refer to \cite{CABRE2005444} for a more extensive review of the literature on slow invariant manifolds up to 15 years ago.\\
Other similar singular perturbation theories have 
been considered in the literature. Indeed, we will use a method similar to those used in \cite{calleja2017domains}, for small dissipations proportional to velocity  and \cite{calleja2012construction, calleja2017response}, for strong dissipation and forcing. The techniques used in small dissipation problems are closely related to the techniques used for parabolic manifolds, which can be considered heuristically as systems with an infinitesimal dissipation. See, for example, \cite{baldoma2004exponentially} or \cite{baldoma2007parameterization} which, as this paper, is based on the parametrization method. In those papers (as  in the present one) the technique is to first develop a formal perturbation theory that produces an approximate solution, then develop an a-posteriori theory that produces a true solution.  In our case, this technique is crucial to obtain results in neighborhoods of uniform size. In the theoretical results, the approximate solutions in the a-posteriori theorem are those produced by the asymptotic expansions, but one could take as well the solutions produced by a numerical method. 

\subsection{Organization of the paper} 
\label{sec:organization}
n section 2, we recall the classical results on the LSM, including their proves. We also present a special coordinate system for Hamiltonian systems, that will prove useful in the later calculations.\\
Sections 3 and 4 contain the new results of the paper. An informal statement of our main result, Theorem \ref{mainThm}, is included in Section~\ref{sec:statement}. The formal result is included as Theorem~\ref{mainprop}. The proof of Theorem~\ref{mainprop} is completed in two steps in Section~\ref{sec:proof}. \\
In a first step, taken up in Section~\ref{sec:approximate}, we obtain formal asymptotic expansions for the SSM. These expansions may be of practical interest since they give approximations of the manifolds and of the dynamics up to order $|\eps|^{N+1}$ ($\eps$ being a parameter
that measures the strength of the dissipation) domains of size $\O(1)$. These expansions give very detailed information on the convergence to equilibrium under weak dissipation.\\ 
The second step of the proof of Theorem~\ref{mainprop}, taken up Section~\ref{sec:fixedpoint}, shows that near prediction of the formal expansions there is a true SSM. This is obtained by reformulating the problem of invariance as a fixed-point problem for a functional acting an appropriate function space. Even if the contraction is weak, we can obtain a fixed point starting the iterative step from the approximate solution obtained in the first stage.\\
In Section~\ref{sec:examples}, we provide some examples that show that some of our assumptions on the survival of LSMs as SSMs cannot be completely omitted. We present examples where there is no analytic (or even differentiable of high order) convergence in domains of size $\O(1)$.\\
Finally, in Section~\ref{sec:applications}, we illustrate our results on a concrete mechanical example. 

\subsection{Notation}

\def\BB{{\tilde B}^{n}_\delta}

Let
\begin{equation}
B_{\delta}^{n}=\{x\in\real^{n}:\,|x|<\delta\},
\end{equation}
denote the $n$-dimensional ball of radius $\delta$ around the origin of $\real^n$. For a sufficiently small $\tau > 0$, we will denote \begin{equation}\label{deftau}
\BB = \{x \in \complex^{n}|  d(x, B_\delta^{n}) < \tau\},
\end{equation} 
where, here, $d$ is the standard Euclidean distance. In the following, we will not write explicitly the parameter $\tau$ to avoid cluttering the notation.\\
Since the proofs we present will be based on soft methods (the implicit function theorem and contraction mappings in function spaces), the arguments for real values carry over to complex values as well. Of course, we need to take care of making sure that the domains match.\footnote{Even if the complex values of variables of parameters may not have a direct physical interpretation, they are indispensable to discuss analyticity properties. Of course, the physically relevant real values are particular cases of the complex ones and the results stated for complex sets apply to the real sets of physical interest. We will assume that the functions, even though they are defined for complex values, give real results for real arguments.}\\
For any Lipschitz continuous function $f:U\to\real^n$, defined on some open subset $U\subseteq\real^m$, we will denote its Lipschitz constant on $U$ as $\Lip(f)$.\\
For a matrix $A$, denote the full spectrum of $A$ by $\sigma(A)$. For a collection of eigenvalues $\lambda_{1},...,\lambda_{n}$ of an $n\times n$ matrix $A$, we denote the (generalized) eigenspace associated with $\lambda_{1},..,\lambda_{k}$ as 
\begin{equation}
\eig(\lambda_{1},...,\lambda_{j}):= 
{\rm Span} \{v\in\complex^{n}:(A-\lambda_kI)^lv=0\text{ for }1\leq k\leq n\text{ and some } l\geq 1\},\label{specss}
\end{equation}
which we will call a \textit{spectral subspace}. We write $\|A\|$ for the operator norm of $A$.\\
As it is well known, when $A$ is a real matrix and the sets of eigenvalues contains  the complex conjugates of all the its members, i.e. $\lambda_j^* \in \{ \lambda_m\}_{m = 1}^n$, then 
$\eig(\lambda_1, \cdots, \lambda_k)$ restricts to a real subspace of $\real^n$.\\
For two functions $f,g:\real^{n}\to\real$, we write $f(x)=\mathcal{O}(g(x))$ if there exists a constant $C>0$ such that $|f(x)|\leq C|g(x)|$ in a neighborhood of $x = 0$.\\
Let
\begin{equation}\label{Cdomain} 
\C_{\th}=\{\eps\in\complex |\,  \Re(\eps)  > 0|, \,  |\Im(\eps)|<\th |\Re(\eps)|\},
\end{equation}
for some $\th>0$, be a cone of complex numbers with width $\th$\footnote{The domain $\C_{\th}$ will play an important role in some of our results. The formal expansions in the dissipation parameter will be valid in domains of the form $\C_{\th}$. Note that the domains $\C_{\th}$ do not contain any ball centered at the origin, so that we do not show	that the expansions converge.}.

\subsection{Spaces of functions} 
Our upcoming contraction mapping arguments will require a careful definition of function spaces and norms. Specifically, we define for analytic functions $K:\BB\to\complex^n$ with $D^jK(0)=0, \text{ for }j=0,1,...,d-1$,
\begin{equation}\label{analyticnorm} 
\| K \|_{\A_{\delta,d} } = \sup_{z \in \BB \setminus\{0\} }
|z|^{-d  } | K(z)|,
\end{equation}
Let
\begin{equation}\label{defA}
\mathcal{A}_{\delta,d}
:=
\{K:\BB\to\complex^n: K\text{ is analytic }, D^jK(0)=0, \text{ for } j=0,1,...,d-1, \| K\|_{\mathcal{A}_{\delta, d}} <\infty \},
\end{equation}
a space of bounded  analytic functions defined in $\BB$ with vanishing derivatives at the origin up to order $d$. We endow these spaces with the norm \eqref{analyticnorm}, which turns $\mathcal{A}_{\delta,d}$ into a complex Banach space. Equivalently, the norm $\| K\|_{\A_{\delta,d}}$ can be defined as the smallest constant $C\geq0$ for which $|K(z)| \leq C |z|^{d}$.\\
\begin{remark}
	The proof that  the space $\mathcal{A}_{\delta,d}$ is complete under \eqref{analyticnorm} is included for completeness. We argue that, given a Cauchy sequence  $\{K_n\}_{n\in\nat}$ in $\mathcal{A}_{\delta,d}$, it is also a Cauchy sequence in $C^0$ and, by the completeness in of $C^0$ it has a $C^0$-limit, which we denote as $K$. This limit will be analytic because the uniform limit of analytic functions is analytic.  Moreover, since $|K_n(x) | \le C |x|^{d}$, we conclude that $K$ is $\mathcal{A}_{\delta, d}$. Because $K_n$ is Cauchy, we know that given $\eps > 0 $ we can find $N_0(\eps)\in\mathbb{N}$ so that if $n, m >  N_0$, then $| K_n(x) - K_m(x) \le \eps |x|^d$. Taking limits in $m$ to $\infty$, we conclude  that for $n > N_0$, we have $| K_n(x) - K(x) | \le \eps |x|^d$.
\end{remark}
Since we are interested in real-valued parametrizations of the invariant manifolds, we define \begin{equation}\label{defAreal}
\mathcal{A}_{\delta,d}^{real}=\{K\in\mathcal{A}_{d,\delta}: K \text{ takes real values for real arguments} \},
\end{equation}
which defines a linear subspace of the space $\mathcal{A}_{\delta,d}$. Since $\mathcal{A}_{\delta,d}^{real}$ is a closed linear subspace of $\mathcal{A}_{\delta,d}$, it is a Banach space as well.

\begin{remark}[Contraction properties of composition]
Weighted norms such as \eqref{analyticnorm} have been found useful in proofs dealing with weak contractions, cf. \cite{CABRE2005444,Cab2003,de1997invariant}. The relevant property of these weighted norms is that if $s: \BB\rightarrow \BB$, $s(0) =0$ is a contraction, i.e., $\Lip(s) < 1$, the operator $K\rightarrow K\circ s$ is an even stronger contraction in such norms. Indeed, 
\begin{equation}
|K\circ s (z) |  \le |s(z)|^{d} \| K\|_{\A_{\delta,d}} \leq
 \Lip(s)^{d}|z|^{d}  \| K\|_{\A_{\delta,d}}.
\end{equation}
Hence, if $s$ is a contraction fixing the origin, we obtain 
\begin{equation} \label{goodcontraction}
\| K\circ s \|_{\A_{\delta,d}}  \leq  \Lip(s)^{d} \| K \|_{\A_{\delta,d}}.
\end{equation}
If $\{K^n_\eps(z)\}_{n\in\nat}$ is a sequence of functions analytic jointly in $(z,\eps)$ and in $z$ for fixed $\eps$, and the sequence $\{K^n_\eps(z)\}_{n\in\nat}$ converges in $\A_{\delta,d}$ to $K_\eps$, then $K_\eps$ is also jointly analytic in $(z,\eps)$, cf. \cite[Chapter III]{hille1996functional}.
\end{remark}

\section{Lyapunov Subcenter Manifolds}
In this section, we review some classical results on the theory of Lyapunov subcenter manifolds. For more details and other variants of the results, we refer the reader to \cite{liapounoff1907probleme, MoserZ, MeyerH, Duistermaat1972,kelley1967liapounov,kelley1969analytic,moser1976periodic}.\\
Specifically, we consider a differential equations of the form
\begin{equation}
\dot{X}=LX+N(X)+ \eps CX + \eps G_\eps(X) \equiv F_\eps(X) ,
\label{main}
\end{equation}
for an unknown $X:(0,\infty)\to\real^n, t\mapsto X(t)$ and $\eps\geq 0$.\\ The $n\times n$-matrix $L$ and the analytic nonlinearity $N(X)= O(|X|^2)$ constitute the unperturbed system, while the $n\times n$ matrix $C$ and the analytic nonlinearity $G_\eps(X)=\mathcal{O}(|X|^2)$ constitute the perturbation, i.e., we regard equation \eqref{main} as a perturbation of the system 
\begin{equation}
\dot{X}=LX+N(X),\label{unpert}
\end{equation}
on which we make the following assumptions.
\begin{assumption}\label{AssLSC}

\noindent
\begin{enumerate}
\item 
The matrix $L$ is semi-simple (i.e. diagonalizable)
\item 
The matrix $L$ has a pair of complex conjugate
eigenvalues with zero real-part, i.e.,
\begin{equation}
\{\pm\ri\omega_0\}\subset\sigma(L),
\end{equation}
for some $\omega_0>0$. 
\item The remaining $n-2$ eigenvalues of $L$, which we denote by $\{\mu_{k}\}_{1\leq k\leq n-2}$,
are \textit{non-resonant} with the eigenvalues $\pm\ri\omega_0$, i.e.,
\begin{equation}\label{nonresunpert}
\frac{\mu_{k}}{\ri\omega_0}\notin\integer,
\end{equation}
for all $1\leq k\leq n-2$. In particular, $0\notin\spec(L)$. 
\item There exists an analytic  first integral to
equation \eqref{unpert}, i.e., there exists an analytic 
 $I:\real^n\to\real$
such that for any solution $t\mapsto X(t)$, 
\begin{equation}
\frac{d}{dt}I(X(t))=0.\label{FirstInt}
\end{equation}

We will assume  that the function $I$ is normalized to $I(0)=0$ (without loss of generality), satisfies $\nabla I(0)=0$ and its second derivative at the origin is non-degenerate (without loss of generality, positive definite) on the eigenspace associated with $\pm\ri\omega_0$, i.e., 
\begin{equation}
D^{2}I(0)(Y,Y)>0,  \label{posdef}
\end{equation}
for all $Y\in\mathbb{R}^n, Y \in \eig(\pm \ri \omega_0)$.
\end{enumerate}
\end{assumption}

\begin{remark}
Because of assumption \eqref{posdef}, the energy $I$ is equivalent to $|x|^2$ near the origin in the LSM. Also, the orbits of the unperturbed system \eqref{unpert} stay on level set of $I$ by assumption \eqref{FirstInt}. Therefore, inside the LSM, we can define \textit{action-angle coordinates}, which are geometrically equivalent to $\Big(|x|^2, \text{Arg}(x)\Big)$, cf. \cite{arnol2013mathematical}. In these coordinates, the orbits of the unperturbed system are just circles.	
\end{remark}

\begin{remark} 
The assumption that $L$ is semi-simple will not play an important role. It is not used in the Lyapunov subcenter theorem nor on the existence of asymptotic expansions. For the proof of existence of spectral submanifolds, only two consequences are used, namely, the analytic dependence of the dissipation parameter $\varepsilon$ and the persistence of certain non-resonance conditions. If they can be verified by other means, we do not need semisimplicity of $L$.
\end{remark} 

\begin{remark} 
For the Lyapunov subcenter theorem, we do not need to impose any restriction on the eigenvalues $\mu_k$ except the nonresonance. They could  be imaginary or have non-zero real part. \footnote{Note, however, that the preservation of $I$ imposes some restrictions. If some eigenvalues have positive or negative real parts, the conserved quantity has to be degenerate along the eigenspaces related to these eigenvalues.}
As for our results, the (formal) expansions will not require any restrictions on $\mu_k$ beyond the non-resonance with $i\omega_0$. The results on convergence, i.e., on the existence of a true solution to the invariance equation, presented in this paper will require that the $\mu_k$ are imaginary as well as a further non-resonance condition and the assumption that the unperturbed system is Hamiltonian.\\ Note that, for a general system, we can always reduce to the case of imaginary eigenvalues by taking a restriction to the center manifold. Using the center manifold reduction, however, requires dealing with the problem of non-uniqueness of the center manifold and that it is only finitely differentiable. These problems will require different techniques. We hope to come back to them in a subsequent paper.
\end{remark}

\begin{remark}
In our convergence results we will also assume that the system is Hamiltonian. This is a natural assumption for the applications to mechanical systems. From the mathematical point of view, this leads to some uniform estimates. See Lemma~\ref{uniform}. In Example~\ref{non-uniform}, we show that in the case that the system is energy preserving but not Hamiltonian, the uniform estimates in Lemma~\ref{uniform} may be false. It seems that, in this case, the results of convergence may be false and that there are new phenomena that may appear. Again, dealing with these new phenomena will require new techniques. 
\end{remark}

We let 
\begin{equation}
\begin{split} & X_{1}:=\eig(\pm\omega_0\ri),\\
 & X_{2}:=\eig\Big(\{\mu_{k}\}_{1\leq k\leq n-2}\Big).
\end{split}
\end{equation}
Using the spectral projections, we can decompose the phase space as $X=X_{1}\oplus X_{2}$. The linear spaces $X_{1}$ and $X_{2}$ are invariant under $L$, i.e., $L(X_{1})\subseteq X_{1}$ and $L(X_{2})\subseteq X_{2}$. We let $\pi_{X_{1}}:X\to X_{1}$ and $\pi_{X_{2}}:X\to X_{2}$ be the corresponding projections (cf. Figure \ref{ImgLSC}) and define
\begin{equation}
L_{1}:=\pi_{X_{1}}L,\quad L_{2}:=\pi_{X_{2}}L.
\end{equation}
For later computations, we will denote the variables in the spaces $X_{1}$ and $X_{2}$ as 
\begin{equation}
(x,y)\in X_{1}\oplus X_{2}=\real^n,
\end{equation}
with $\dim(x)=2$ and $\dim(y)=n-2$.\\
The following theorem summarizes the classical existence results on a one-parameter family of periodic solutions close to the trivial solution $X=0$ of equation \eqref{unpert}.

\begin{theorem}\label{exLSC} 
Let system \eqref{unpert} satisfy parts (2), (3), (4) of Assumption \ref{AssLSC}. Then, there exists a two-dimensional, invariant manifold
$M_{0}$ tangent to the spectral subspace $X_{1}$. The manifold $M_{0}$ is analytic and filled with a one-parameter family of periodic orbits.
\end{theorem}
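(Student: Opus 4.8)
The plan is to prove the classical Lyapunov Subcenter Theorem via the implicit function theorem, following the approach of Kelley and Moser--Zehnder rather than the original series estimates. The key idea is to look for periodic orbits of a fixed period that is close to $2\pi/\omega_0$, parametrized by their amplitude. First I would rescale time by setting $\tau = \Omega t$ where $\Omega$ is an unknown frequency close to $\omega_0$; this turns the problem of finding periodic orbits of unknown period into finding $2\pi$-periodic solutions of the rescaled equation $\Omega \frac{dX}{d\tau} = LX + N(X)$, with $\Omega$ as an additional unknown. On the linear level, the two-dimensional eigenspace $X_1$ carries the family of periodic solutions of the harmonic oscillator $\dot{x} = L_1 x$, each of period exactly $2\pi/\omega_0$; the nonresonance condition \eqref{nonresunpert} guarantees that $L_2$ has no eigenvalue that is an integer multiple of $\mathrm{i}\omega_0$, so the linearized operator $\Omega_0 \frac{d}{d\tau} - L_2$ is invertible on $2\pi$-periodic functions valued in $X_2$.

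The second step is to set up the bifurcation equation. Write $X(\tau) = a\, \xi(\tau) + w(\tau)$ where $\xi$ spans the periodic solution on $X_1$ corresponding to $+\mathrm{i}\omega_0$ (in real form), $a \geq 0$ is the amplitude, and $w$ lies in a complement transverse to the kernel of the linearized period map. The equation $\Omega X' = LX + N(X)$ then splits, via the spectral projections $\pi_{X_1}, \pi_{X_2}$ and a further Fourier projection onto the resonant mode inside $X_1$, into a "range" part that determines $w = w(a,\Omega) = O(a^2)$ analytically by the implicit function theorem (using invertibility of the linearization away from the kernel), and a single scalar "bifurcation equation" $g(a,\Omega) = 0$ that must be solved for $\Omega = \Omega(a)$. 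The role of the first integral $I$ is exactly to force this scalar equation to be solvable: differentiating \eqref{FirstInt} shows $I$ is constant along orbits, and the nondegeneracy \eqref{posdef} of $D^2 I(0)$ on $X_1$ means $I$ restricted to the would-be manifold behaves like $|x|^2$, which provides the Lyapunov--Schmidt reduction with the needed transversality to conclude $\Omega(a)$ exists, is analytic in $a$ near $0$, and $\Omega(0) = \omega_0$. (Concretely: the conserved quantity prevents the single resonant harmonic from obstructing periodicity, playing the role that a symmetry or a variational structure plays in related arguments.)

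The third step is to assemble the manifold. Having found, for each small $a \geq 0$, a genuine $2\pi/\Omega(a)$-periodic orbit $X_a(\tau) = a\xi(\tau) + w(a,\Omega(a))(\tau)$ depending analytically on $a$, I would define $M_0$ as the union $\bigcup_{|a| < \delta} \{X_a(\tau) : \tau \in [0,2\pi)\}$. Analyticity of the orbits jointly in $(a,\tau)$, together with the fact that the orbits foliate $M_0$ by level sets of $I$ and the action--angle structure noted in the Remark after Assumption~\ref{AssLSC}, shows $M_0$ is an embedded analytic two-dimensional manifold near the origin; it is invariant because it is a union of full orbits; and it is tangent to $X_1$ at $0$ because $X_a = a\xi + O(a^2)$ and $\xi$ sweeps out $X_1$ as $\tau$ varies. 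An alternative packaging, closer to the parametrization method used later in the paper, is to directly seek an analytic embedding $K: B_\delta^2 \to \real^n$ with $K(0) = 0$, $DK(0)$ spanning $X_1$, conjugating the flow on $M_0$ to a planar flow $\dot{\theta} = R(\theta)$; the invariance equation $F_0(K(\theta)) = DK(\theta) R(\theta)$ is then solved order by order in $\theta$ (the nonresonance makes each order uniquely solvable up to the kernel, and $I \circ K$ being a function of $|\theta|^2$ alone fixes the remaining freedom), and convergence of the resulting series follows from a standard majorant/implicit-function argument.

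The main obstacle is the scalar bifurcation equation: a priori one only gets, from the implicit function theorem applied to the range, a manifold of "almost periodic" solutions, and one must show the single remaining Fourier coefficient in the resonant direction actually vanishes for a suitable choice of $\Omega(a)$. Without extra structure this coefficient need not vanish — generic perturbations do destroy the periodic orbits, which is precisely the phenomenon the rest of the paper is about. Here it is the existence of the analytic first integral $I$ with $D^2I(0)$ nondegenerate on $X_1$ that saves the day, so the crux of the proof is to exploit \eqref{FirstInt} and \eqref{posdef} to close the Lyapunov--Schmidt reduction. I would handle this by showing that conservation of $I$ implies the bifurcation function $g(a,\Omega)$ has a factor that can be divided out (the "reduced" equation is then solved trivially), a device going back to Lyapunov and cleanly exposited in \cite{MoserZ, MeyerH}; everything else — invertibility of the linearized operator on $X_2$-valued periodic functions, analyticity of $w(a,\Omega)$, smoothness of the assembled manifold — is routine once the nonresonance hypothesis is in force.
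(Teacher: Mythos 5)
Your argument is correct, but it follows a genuinely different route from the paper. You perform a Lyapunov--Schmidt reduction in the space of $2\pi$-periodic loops after rescaling time by an unknown frequency $\Omega$: nonresonance makes $\Omega_0\frac{d}{d\tau}-L$ invertible off its two-dimensional kernel, the range equation is solved by the implicit function theorem, and the conserved quantity $I$ (with \eqref{posdef}) closes the remaining scalar bifurcation equation. The paper instead rescales the \emph{space} variable ($x=\nu u$, so the vector field and the scaled integral $I_\nu$ have analytic limits at $\nu=0$), takes a Poincar\'e section in $X_1$, restricts the return map to a level set of $I$, and applies the finite-dimensional implicit function theorem: the two unit eigenvalues of $\exp(TL)$ are killed respectively by the section and by the energy restriction, and nonresonance keeps the remaining eigenvalues $\exp(2\pi\ri\mu_k/\omega_0)$ away from $1$. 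The hypotheses enter in parallel ways (your invertibility of the linearized loop operator is the paper's invertibility of $\partial_y R_{0,E}-\Id$; your bifurcation-equation fix via $I$ is the paper's restriction to energy surfaces), but the paper's version has the advantage of being a purely finite-dimensional IFT and of making analyticity at the origin transparent through the $\nu$-scaling, while yours produces the frequency--amplitude relation $\Omega(a)$ (the backbone curve) as an explicit output of the reduction. One point where your assembly is terse: the parametrization $(a,\tau)\mapsto X_a(\tau)$ is polar-coordinate-like and degenerates at $a=0$, so analyticity of $M_0$ \emph{at the origin} (as a graph $y=w_0(x)$ over $X_1$) requires checking that the $k$-th Fourier mode of $w(a,\cdot)$ vanishes to order $|k|$ in $a$; this is true and standard, and the paper's scaling argument is designed precisely to dispose of it, but in your setup it deserves a sentence.
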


We refer to the invariant manifold $M_{0}$ as
a \textit{Lyapunov subcenter manifold} (LSM). There exists a constant $\delta>0$, such that, locally around the origin, we can describe this LSM as the graph of an analytic function $w_{0}:B_{\delta}^{2}\to\real^{n-2}$, as illustrated in Figure \ref{ImgLSC}.\\
Note that the fact that an orbit is periodic is a topological property, so that the set is unique under topological properties. 

\begin{figure}[h]
\centering \includegraphics[scale=0.4]{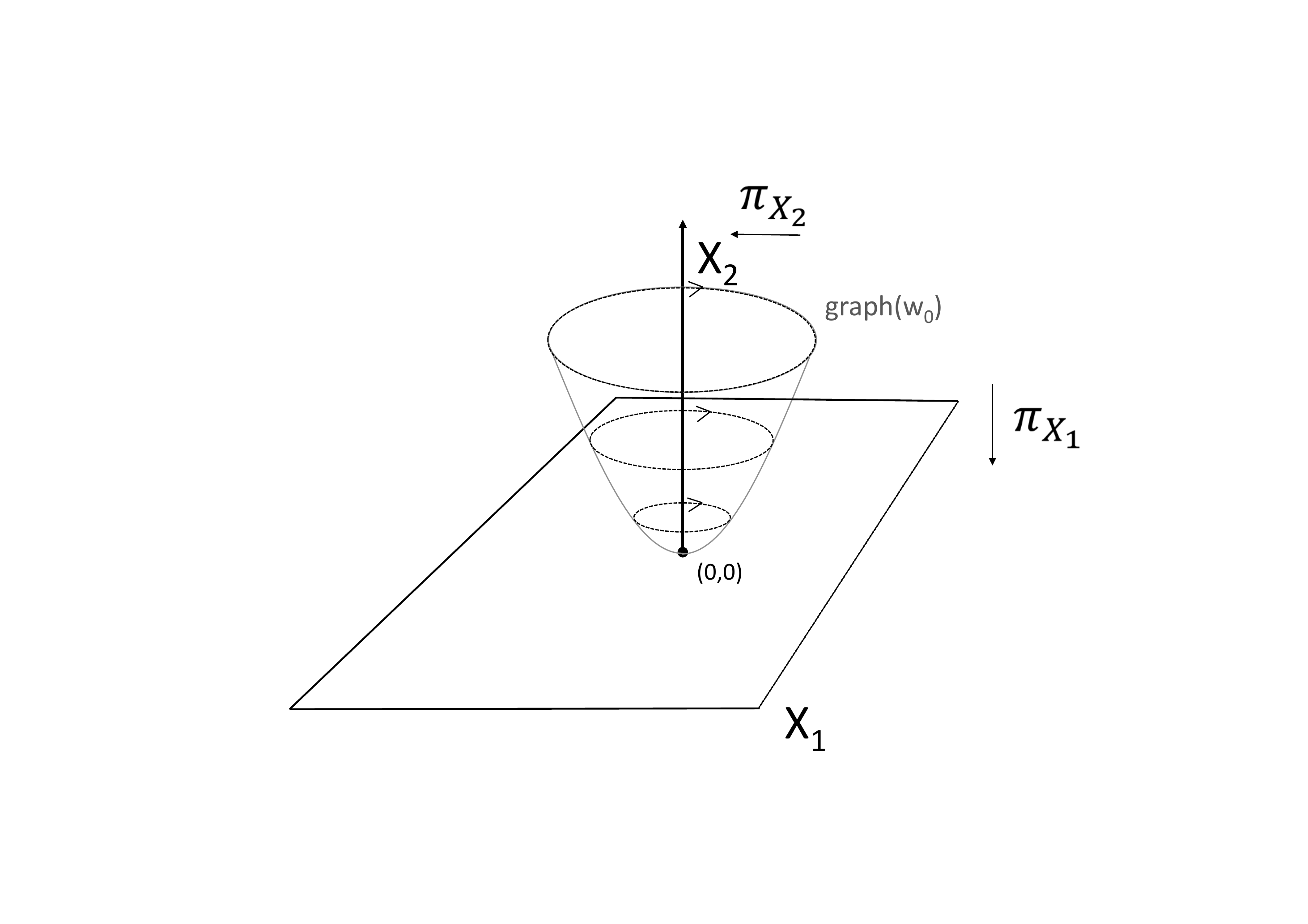}
\caption{LSM tangent to the two-dimensional spectral subspace $X_{1}$,
represented
as the graph of an analytic function $x\protect\mapsto w_{0}(x)$
with radius of convergence $\delta$. The manifold is unique and filled
with periodic orbits.}
\label{ImgLSC} 
\end{figure}

\begin{proof} 
This result was first proved in 
\cite[p. 352]{liapounoff1907probleme}. See also
\cite{kelley1967liapounov,MoserZ, MeyerH} for similar arguments as the argument presented here. This  proof does not assume that the 
system is Hamiltonian (only  that it  has a conserved quantity) and it allows that the system has repeated eigenvalues or Jordan blocks, some of which could be stable/unstable.\\
We introduce a small parameter $\nu$ and scale the variables in \eqref{unpert}. Writing $x = \nu u$ we, see that \eqref{unpert} is equivalent to 
\begin{equation}\label{scaled} 
u' = L u + \nu^{-1} N(\nu u).
\end{equation} 
Because $N$ vanishes up to second order, we have $\nu^{-1}N(\nu u) = \nu \tilde N_\nu(u) $, 
so that \eqref{scaled} has a well defined limit as $\nu$ tends to $0$. We also observe that if \eqref{unpert} preserves $I$, then \eqref{scaled} preserves $I_\nu(u) = \nu^{-2} I (\nu u)$. Note that $I_\nu$ has a well defined limit $I_0(u) = D^2I(0)(u,u)$.\\
We start by  studying the limit $\nu = 0 $ of \eqref{unpert}. In the two dimensional space $X^1$, the flow is just a rigid rotation with period $T \equiv 2 \pi/\omega_0$. We note that the spectrum of $\exp( TL)$ is the exponential of the spectrum of $T L$, that is 
\begin{equation} \label{eigenvalues}
\sigma( \exp{TL})  = 
\left\{ \{\exp( 2 \pi \ri \mu_k/\omega_0)\}_{k= 1}^{n-2}, 1, 1\right\}.
\end{equation}
We write the two dimensional real plane  $X_1$ corresponding to the eigenvalues $\pm \ri \omega_0$   as  the set of points $(x_1, x_2)$ and we will denote the points in the complementary spectral space as $y$.\\
If we consider the return map $R_{0,E}$ of
to the co-dimension one plane  $x_1 = 0$ restricted to a level surface of the conserved quantity\footnote{
We will refer to this conserved quantity as the energy since this is what happens in many problems. It also allows to use  names such as 
``energy surface'' for the level sets etc.} we see that the spectrum of the return map restricted to an energy surface, cf.  \eqref{eigenvalues}, is just 
$\{\exp( 2 \pi \ri \mu_k/\omega_0)\}_{k= 1}^{n-2}$ since the two eigenvalues $1$ of  $\exp{LT}$ correspond respectively to the translation along the flow (eliminated by the return map) and the translation along the energy (eliminated by taking the energy surface). The non-resonance assumption of the theorem tells that $\mu_k/\omega_0$ is not an integer, hence, the eigenvalues of the return map restricted to the energy surface  are not $1$. Now we observe that $R_{\nu,E}$ depends analytically on $\nu$ for $\nu$ small.\\
The previous observations amount to the fact that writing points in the axis $x_1 = 0$
say that $R_{0, E}(0, x_2(E), 0)  = (0, x_2(E), 0)$. Furthermore, $\partial_y R_{0, E}(0,x_2(E), y) |_{y = 0}  - \Id$ is an invertible matrix. Hence, applying the finite dimensional, implicit function theorem \cite{Lang1999, Dieudonne,Meyerimplicit} we obtain that, for small $\nu$, we can find analytic families of fixed-points of the return map $R_{\nu, E}$ indexed by $\nu$ and $E$. This argument also shows that the manifold is analytic everywhere, including at zero. Indeed, it is an interesting exercise to compute the coefficients of the expansion at zero of the manifold.\\
Using the scaling, it is not difficult to show that the family is tangent at zero to the eigenspace. One can also observe that the implicit function theorem allows to compute the derivatives of the manifold at the origin. We leave the details to the reader,  see also \cite{Moulton20}.
Also the local uniqueness statements are those of the standard implicit function theorem. The periodic orbits are locally unique in the energy surface.
\end{proof}

\begin{remark}
The method of proof of Lyapunov center theorem presented here has been generalized   to infinite dimensional systems \cite{Bambusi00} or systems with symmetry in \cite{BuonoLM05,CallejaDG18}. It would be interesting to study the effects of adding dissipation to these models. 
\end{remark} 

\begin{remark} 
Since the proof above is based only on the implicit function theorem, it applies also to finitely differentiable systems. If the vector field is $C^\ell$, $\ell \ge 1$,  we  get a $C^\ell$ manifold, see \cite{kelley1967liapounov}. 
\end{remark}


\begin{remark}
To set up the analyticity results, it is convenient to examine what happens for complex values of the variables and the parameters.\\
We observe that if we consider now $x \in \complex^2$, the scaling arguments still work and the flow is transversal to $x_1 = 0$ in the complex sense, entailing the return time to be a complex variable. The periodic orbit will consist of the orbit for complex times in a neighborhood of the path joining $0$ to the complex period. The union of all these periodic orbits covers $\BB$.\\
The singular nature of the dissipative perturbations, is also apparent in the complex interpretation. Once we add a dissipation, using Sternberg theorem \cite{sternberg1957local},
we know that the exponentially contracting orbit conjugate to an exponential, hence a complex periodic orbit. So, the family of 
periodic orbits in the conservative system bifurcate into a single complex periodic orbit. This clearly indicates the singular nature of the problem.
\end{remark}

For later calculations, we introduce a normalization of the vector field $F_0(X)$ such that the dynamics on the LSM are just given by constant-phase rotations. This will simplify subsequent arguments.\\
Let $T=T(I)$, only depending upon the energy, be the first return time of the periodic orbit with energy $I$ to a line of section and define $\Omega(I)=\frac{2\pi}{T(I)}$. Then, the dynamics on the LSM associated to the system
\begin{equation}\label{suspension}
\dot{X}=\frac{\omega_0}{\Omega(I)}F_0(X),
\end{equation}
is just given by rigid rotations with frequency $\omega_0$.\\
Note that, multiplying a vector field by an scalar, does not change the invariant manifolds, the periodic orbits or the conserved quantity. In the Hamiltonian case, if we multiply the vector field by a function of the Hamiltonian, we obtain a Hamiltonian vector field.\footnote{If $X = J \nabla H$, then,
for any function $\alpha: \real\rightarrow \real$, we have $\alpha(H) X = J \alpha(H)\nabla H  = J \nabla \beta(H)$ where $\beta' = \alpha$. Hence, the time-scaled vector field is also Hamiltonian}

\begin{remark}
The advantage of multiplying the vector field is that it is obvious that the derivative of the flow restricted to the Lyapunov manifold is just a rotation (hence modulus $1$).  This, of course, could be obtained also by defining a new system of coordinates. \\
The normalization \eqref{suspension} is an explicit application of the standard suspension construction  explained e.g. in \cite{katokh}, showing that, for a compact manifold, the special flow with respect to the first return time is equivalent to the suspension flow. The construction presented makes the constructions more explicit.
\end{remark}

We will need the following lemma in later calculations when perturbing from the LSM.   

\begin{lemma}\label{uniform}
Assume that system \eqref{unpert}, normalized according to \eqref{suspension}, satisfies Assumptions \eqref{AssLSC}. Choose coordinates $(x,y)$, $x\in\mathbb{R}^2$ and $y\in\mathbb{R}^{n-2}$, such that the LSM corresponds to the plane $\{y=0\}$ and let
$\phi=\phi^{T_0}$ be the time-$T_0$ map of system 
\eqref{suspension}, for $T_0=\frac{2\pi}{\omega_0}$.
Then we have
\begin{equation}\label{time1y}
\phi(x,y)=(x+B(x)y,A(x)y)+\mathcal{O}(|y|^2),
\end{equation}
for some $n-2\times n-2$-dimensional matrix function $A$ and some $2\times n-2$-dimensional matrix function $B$ depending only on the energy.\\
In case that the flow is Hamiltonian and that $A(0)$ has only simple eigenvalues with modulus one, we have that the eigenvalues of the matrix $A(x)$ are of modulus one for all $|x| \in \mathbb{R}^2$ with $|x| \le  \delta$, for some $\delta>0$\footnote{This $\delta$, for which the coordinates \eqref{time1y} with all eigenvalues of $A$ having modulus one, will be the fundamental domain of existence for the later perturbation argument.}.
\end{lemma}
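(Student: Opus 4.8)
The plan is to analyze the structure imposed by the Hamiltonian assumption on the linearized return dynamics transverse to the LSM, and then track how the spectrum of $A(x)$ varies with the energy. First I would write down the variational equation of the normalized flow \eqref{suspension} along a periodic orbit $\gamma_I$ lying in the LSM at energy level $I$. Since the flow is Hamiltonian and the LSM is a two-dimensional symplectic submanifold foliated by periodic orbits, the linearized time-$T_0$ map $D\phi$ along $\gamma_I$ preserves the symplectic form; restricting to the transverse directions and quotienting by the tangent/energy directions (exactly as in the proof of Theorem~\ref{exLSC}, where the two unit eigenvalues of $\exp(T_0 L)$ are removed by the return map and the energy surface), one obtains that $A(x)$ is conjugate to the transverse monodromy matrix, which is a \emph{symplectic} $(n-2)\times(n-2)$ matrix depending analytically (hence continuously) on the energy $I = |x|^2$ up to the normalization constant.

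The key step is then: for $I = 0$ the transverse monodromy is $\exp(T_0 L_2)$ restricted appropriately, with eigenvalues $\{\exp(2\pi \ri \mu_k/\omega_0)\}$, which by hypothesis are simple and of modulus one (they lie on the unit circle because the $\mu_k$ are imaginary in the convergence setting, and they are not equal to $1$ by the nonresonance condition \eqref{nonresunpert}). For a symplectic matrix, eigenvalues come in quadruples $\{\lambda, \bar\lambda, 1/\lambda, 1/\bar\lambda\}$, and an eigenvalue can only leave the unit circle by first colliding with another eigenvalue (a Krein collision). Since at $I=0$ all eigenvalues are simple and on the unit circle, by continuity of the spectrum they remain simple and on the unit circle for $|I|$ small; more precisely, a simple eigenvalue $\lambda(I)$ of a symplectic matrix that starts on $\circle^1$ must satisfy $\lambda(I) = 1/\overline{\lambda(I)}$ as long as it stays simple (it is forced to be its own ``symplectic partner'' up to complex conjugation), which pins it to the unit circle. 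A compactness/openness argument then yields a uniform $\delta>0$ such that for all $|x|\le\delta$ the eigenvalues of $A(x)$ have modulus one.

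Concretely, the steps in order: (i) identify $A(x)$ with the transverse monodromy of $\gamma_{|x|^2}$ via the coordinates in which the LSM is $\{y=0\}$ and the dynamics on it are rigid rotations; (ii) use the Hamiltonian structure plus the fact that multiplying the vector field by a function of $H$ keeps it Hamiltonian (the footnote after \eqref{suspension}) to conclude $A(x)$ is symplectically conjugate to a symplectic matrix, depending continuously on $I=|x|^2$; (iii) invoke simplicity and unit modulus of $\spec(A(0)) = \{\exp(2\pi\ri\mu_k/\omega_0)\}$; (iv) apply the Krein-type rigidity: a simple unit-modulus eigenvalue of a continuously varying family of symplectic matrices stays on the unit circle until it collides with another, and collisions are precluded near $I=0$ by simplicity; (v) extract a uniform $\delta$ by a standard open-cover argument on the compact interval $[0,\delta_0]$ of energies.

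The main obstacle I anticipate is step (ii): making precise that the transverse linearized return map inherits a symplectic structure. This requires care because the LSM itself is only a submanifold (not a symplectic quotient in any canonical coordinates), so one must either pass to a local Darboux-type chart in which the LSM is a coordinate symplectic plane and its symplectic complement carries the transverse variables $y$, or argue via the reduced dynamics on the energy surface (which is what the special coordinate system for Hamiltonian systems promised in Section~2 is presumably designed to provide). Once the symplectic normal form of the monodromy is in hand, the eigenvalue-confinement argument is the classical stability theory of periodic orbits in Hamiltonian systems and is essentially routine; the subtlety is purely in correctly setting up the transverse symplectic structure and verifying that the energy-dependent time rescaling does not spoil it.
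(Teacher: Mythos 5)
Your proposal is correct and follows essentially the same route as the paper's proof: both rest on the symplecticity of the time-$T_0$ monodromy along the Lyapunov orbits, the simplicity of the unit-modulus eigenvalues at $x=0$, and the Krein-type rigidity that simple unit-circle eigenvalues of a continuously varying family of symplectic matrices cannot leave the circle without first colliding. The only cosmetic difference is that you pass to the reduced $(n-2)\times(n-2)$ transverse monodromy before applying this argument, whereas the paper works directly with the full Jacobian $D\phi(x,0)$ and disposes of the persistent double eigenvalue $1$ separately, using the invariance of the LSM and the conservation of energy inside it.
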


\begin{proof}
We can take coordinates in which the LSM corresponds to $\{ y= 0\}$. After the normalization \eqref{suspension}, all the periodic orbits have period $T_0$.
By assumption, the Jacobian $D\phi(x,0)$ is a symplectic matrix. It is well known that for symplectic matrices, the inverse of the eigenvalues are also eigenvalues, since, if a symplectic matrix has simple eigenvalues in the unit circle, all the nearby matrices have an eigenvalue in the unit circle too.\\
Our assumption  that $A(0)$ has simple eigenvalues implies that $D\phi(0,0)$ has a double eigenvalue $1$ and all the other eigenvalues are simple in the unit circle. Hence, for small enough $x$, we see that $D\phi(x,0)$ has to have $n-2$ eigenvalues on the unit circle. The double eigenvalue $1$ could, in principle bifurcate, but it does not because of the invariance of the LSM and the conservation of energy inside of the LSM. 
\end{proof}

\section{The main theorem} 
\label{sec:statement}

\subsection{An analytical formulation of the problem}

In this section, we  translate the geometric problem of invariant manifolds into a functional analysis problem by following the idea of the parameterization method \cite{Cab2003,CABRE2005444,haro2016parameterization}. Given a vector field $F_\eps$ (satisfying the hypothesis of the subsequent theorem) we will seek an embedding $K_\eps: B_\delta^2 \rightarrow \real^n$ (which extends to an embedding defined on $\tilde B_\delta^2$)  and another vector field $R_\eps:B_\delta^2 \rightarrow \real^2$ (which also extends to $\tilde B^2_\delta$) in such a way that
\begin{equation}\label{CH}
F_\eps(K_\eps(x))  = DK_\eps(x)R_\eps(x),
\end{equation} 
with $K_\eps(0) = 0$ and $R_\eps(0) = 0$.\\
The equation \eqref{CH} will be the centerpiece of our analysis. 
Note that the geometric meaning is that the range of $K_\eps$ is invariant under the flow of $X_\varepsilon$, i.e., the vector field $F_\eps$ at one point in the range is tangent to the range. The vector field $R_\varepsilon$ is then a representation of the dynamics on the manifold.  

\begin{remark} 
Since $K_\eps$ is an embedding, it can be used to follow several turns of the manifold which are very different from being a graph. There are numerical examples \cite{haro2016parameterization,kalies2018analytic,van2016parameterization} in which the same parameterization can be used to follow a large area containing turns and folds of the manifold in some model examples such as the Lorenz equations. The fact that the proofs are based on a contraction mapping argument allows to justify rigorously any method that produces approximate solutions, e.g., numerical computations. Using the contraction mapping theorem, we can show that if some function moves a very small amount by the application of the operator, then there is a fixed point at a distance from the approximate solution comparable to the distance of the approximate solution to its iterate. This allows to validate numerical calculations rigorously.
\end{remark}

\begin{remark} \label{underdetermined} 
Equations \eqref{CH} are highly under-determined. We have already remarked in \eqref{suspension} that we can change the time multiplying the vector field by a scalar function without affecting the invariant manifolds or the conserved quantities, but there are other sources of undeterminacy as well. In fact, any change  of variables in the reference disk leads to a parameterization of the same manifold. Indeed, if $K_\eps, R_\eps$ are a solution of \eqref{CH} and $h_\eps$ is a local diffeomorphism $h_\eps(0) = 0$, we have 
\begin{equation}
\begin{split} 
F_\eps \circ K_\eps \circ h_\eps &=  (DK_\eps  R_\eps) \circ h_\eps  = 
D(K_\eps)  \circ h_\eps \,  R_\eps \circ h_\eps  \\ 
&= D( K_\eps  \circ h_\eps) (Dh_\eps)^{-1}   R_\eps \circ h_\eps
\end{split} 
\end{equation}
In other words, $\tilde K_\eps =K_\eps \circ h_\eps$, 
$\tilde R_\eps = (Dh_\eps)^{-1}   R_\eps \circ h_\eps$ is also a solution of \eqref{CH}.
One can show, however, that, up to this family of transformations, the manifold is unique among the $d$-times differentiable ones, where $d$ is a number that depends on the spectral properties of $DF_\varepsilon(0)$,  cf. \cite{de1997invariant,Cab2003,CABRE2005444}.\\
We will take advantage of this underdeterminacy to impose some normalization conditions on the parametrization $K_\eps$ and the vector field on the manifold $R_\eps$. From the computational point of view, the underdeterminacy  of equation \eqref{CH} can be used to construct more efficient algorithms for the computation of invariant manifolds as well, cf. \cite{haro2016parameterization}. Furthermore, \cite{sternberg1957local} shows that, in the absence of resonances, there is a system of coordinates in which $R_\eps$ is linear. In our situation, however, this cannot be achieved due to the singular nature of the problem. A linear vector field on the invariant manifold, as described in \cite{sternberg1957local}, would lead to singularities in the expansions as $\eps\to 0$.  By allowing higher order $z$-terms in $R_\eps$ we can avoid the singularities of the change of variables leading to the linearization.
\end{remark} 

We will make the following assumption on the linear part of the perturbed system:
\begin{assumption}\label{Asspert}
The matrix $L+\eps C$ has a pair  of complex conjugate eigenvalues 
$\lambda^{\pm}_\eps$, perturbing from the nonresonant eigenvalues in Assumption \ref{AssLSC}, such that
\begin{equation}
\lambda^\pm_\eps =
 \pm\ri\omega_0 + (- \alpha  \pm \ri \alpha_I) \eps  +  O(\eps^2),
\end{equation}
for some $\alpha>0$ and $\alpha_I\in\mathbb{R}$.
\end{assumption}

\begin{remark}
We note that the eigenvalues depend differentiably on $\eps$ at $\eps = 0$ as a consequence of the nonresonance condition. \\
If $\alpha < 0$, we obtain similar results by switching the direction of time. The content of Assumption~\ref{Asspert} is that $\alpha \ne   0$. We will see that if $\alpha = 0$, the conclusions of the main theorem may be false and there may fail to be an SSM of size 
one in an neighborhood, see Example~\ref{firstexample}. On the other hand, the quantity $\alpha_I$, i.e., the change of the (pseudo)-frequency induced by the dissipation, does not play any role in our analysis.\\
We also note that Assumption~\ref{Asspert} is a condition on the perturbation, not on the unperturbed problem which is very typical for singular perturbation problems.\\
In practical problems, verifying Assumption~\ref{Asspert} is an easy task, since it only involves checking the first order perturbation theory for eigenvalues of a finite dimensional matrix. 
\end{remark}

\subsubsection{Contraction properties of the  perturbed linear part}
\label{pertlin}

Let $X_{1}^{\eps}$ be the eigenspace of $L+\eps C$ that perturbs from $X_{1}$, i.e., $X_{1}^{\eps}\to X_{1}$ as $\eps\to 0$, and let $X_{2}^{\eps}$ be its spectral complement. Since $L+\eps C$ is semi-simple by assumption for $\eps$ small enough, we again have that $X=X_{1}^{\eps}\oplus X_{2}^{\eps}$. Since $L$ does not have repeated eigenvalues, the remaining eigenvalues $\{\mu_k\}_{1\leq k\leq n-2}$ continue to a family of eigenvalues 
$\{\mu^\eps_k\}_{1\leq k\leq n-2}$, which is differentiable at $\eps = 0$ and the corresponding spectral subspace and the spectral projection are differentiable in $\eps$, satisfying
\begin{equation}\label{Oepsspace}
\begin{split}
&X_1^\eps=X_1+\mathcal{O}(\eps),\qquad X_2^\eps=X_2+\mathcal{O}(\eps),\\
& \pi_{X_1^\eps}=\pi_{X_1}+\mathcal{O}(\eps),\qquad \pi_{X_2^\eps}=\pi_{X_2}+\mathcal{O}(\eps),
\end{split}
\end{equation}
where $X_1^\eps=\eig(\lambda^{\pm}_\eps)$, for $\eps$ small enough. All this follows from spectral perturbation theory for matrices, cf. \cite{kato1995perturbation}, \cite[p. 396 Theorem 1]{lancaster1985theory}. Indeed, due to the nonresonance condition \eqref{nonres}, the
eigenvalues $\lambda_\eps^{\pm}$ necessarily have algebraic multiplicity one, which then implies the $\mathcal{O}(\eps)$-dependence in $X_1^\eps$.

\begin{remark}
Generally, if a matrix $L$ has a repeated eigenvalue $\lambda$, only the weaker relation
\begin{equation}
\eig(\lambda_\eps)=\eig(\lambda)+\mathcal{O}(\eps^{1/p}),
\end{equation}
for some $p>0$, holds, cf. \cite{lancaster1985theory}, p.402, Theorem 1. Here, $p$ is the length of the Jordan block associated with $\lambda$. Indeed, e.g., the matrix
\begin{equation}
\left(\begin{matrix}
1 & 1\\ \eps & 1
\end{matrix}\right),
\end{equation}
has spectrum $\{1\pm\sqrt{\eps}\}$ and the corresponding eigenspaces are spanned by the vectors $(1, \pm \sqrt{\eps})$. Note that the eigenvalues move $\sqrt \eps$ in this case, i.e., much faster than $\eps$ and that the angles between the eigenspaces are also small. The later may cause problems if we need to consider the projections over these spaces, since they will have a norm that grows as $\eps$ goes to zero.
\end{remark}

Let $\beta$ be defined as in Lemma \ref{Dyeps} and let $\alpha$ be the linear contraction rate as defined in Assumption \ref{Asspert}. Then, there exists a number $d$ such that
\begin{equation}\label{defL}
\beta<d\alpha.
\end{equation}
Note that if the condition \eqref{defL} is satisfied for some $d$, it is satisfied for all larger ones. Any of those will work for our subsequent considerations. We will choose the parameter $d$ in the definition of the space $\mathcal{A}_{\delta,d}$ such that  \eqref{defL} is satisfied, i.e., depending upon the ratio between the linear contraction rate on the parametrization space of the LSM and a parameter depending on the second derivative of the flow map.\\

\subsection{Formulation of the main theorem}

In this section, we introduce some fundamental parameters that and formulate our main theorem. For the formulation and the proof, we need the following lemma on the time-$T_0$ map for $y$-values of order $\varepsilon$.
\begin{lemma}\label{Dyeps}
Assume that system \eqref{unpert}, normalized according to \eqref{suspension}, satisfies Assumptions \eqref{AssLSC} and assume that the flow is Hamiltonian, such that the normal form \eqref{time1y} holds for all $|x|<\delta$. Let $\phi_\varepsilon^t$ be the flow map of the perturbed system. Then, the Jacobian of the inverse of the time-$T_0$ map of the perturbed system \eqref{main}, which we denote as $\phi_\varepsilon^{-1}$, satisfies \begin{equation}\label{estDphi-1}
\|D\phi^{-1}_\varepsilon(x,\varepsilon y)\|=1+\beta\varepsilon+\mathcal{O}(\varepsilon^2),
\end{equation}
for $\varepsilon>0$, for all $|x|<\delta$, $|y|<\eta$, where $\eta>0$ independent on $\varepsilon$. Here, $\beta$ is such that
\begin{equation}
\|D_yD\phi_0(x,0)\|\leq \beta,
\end{equation}
for all $|x|<\delta$.
\end{lemma}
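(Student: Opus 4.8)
The plan is to compute the effect of the perturbation on the time-$T_0$ map perturbatively in $\eps$, using the variational equation. First I would write $\phi_\eps^t = \phi_0^t + \eps \psi^t + O(\eps^2)$, where $\psi^t$ solves the first variational equation around the unperturbed flow driven by the perturbation $CX + G_\eps(X)$. Since we evaluate at points of the form $(x,\eps y)$ with $|x|<\delta$ and $|y|<\eta$, and the LSM $\{y=0\}$ is invariant for the unperturbed flow, I would first observe that $\phi_0^{T_0}(x,0) = (x,0)$ by the normalization \eqref{suspension}, and that $D\phi_0^{T_0}(x,0)$ has the block structure from \eqref{time1y}, namely identity on the $x$-block (up to the coupling $B(x)$) and $A(x)$ on the $y$-block, with all eigenvalues of $A(x)$ of modulus one by Lemma~\ref{uniform}. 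Hence the norm of $D\phi_0^{T_0}$ restricted to the relevant region, and of its inverse, is $1 + O(|y|)$; evaluating at $y$ of order $\eps$ already gives $1 + O(\eps)$, and the task is to identify the coefficient of $\eps$.

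The key computation is the linearization in $y$. I would differentiate $D\phi^{T_0}_0(x,y)$ with respect to $y$ at $y=0$ and use the bound $\|D_y D\phi_0(x,0)\| \le \beta$ that defines $\beta$ in the statement. Expanding, $D\phi_0^{T_0}(x,\eps y) = D\phi_0^{T_0}(x,0) + \eps\, D_y D\phi_0^{T_0}(x,0) y + O(\eps^2)$, so $\|D\phi_0^{T_0}(x,\eps y)\| \le \|D\phi_0^{T_0}(x,0)\| + \beta\eps|y| + O(\eps^2)$. Since $\|D\phi_0^{T_0}(x,0)\|$ equals one (it is a symplectic matrix fixing a point on the invariant plane whose spectrum on the unit circle, combined with semisimplicity, forces operator norm one — or one adapts a metric in which this holds; this subtlety I would handle exactly as in Lemma~\ref{uniform}), and since passing to the inverse $\phi_\eps^{-1}$ only flips signs of the $O(\eps)$ terms and multiplies by bounded factors, one gets $\|D\phi_\eps^{-1}(x,\eps y)\| = 1 + \beta\eps + O(\eps^2)$ after also incorporating the $O(\eps)$ contribution of the perturbation term $\eps C X + \eps G_\eps$ itself into the constant (absorbed into the $O(\eps^2)$ remainder or into a redefinition consistent with Assumption~\ref{Asspert}, since that part contributes at order $\eps$ uniformly and its leading coefficient is what $\beta$ is designed to capture together with the $D_yD\phi_0$ term). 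I would then note the estimate is uniform over $|x|<\delta$ because $\beta$ is taken as a supremum there and the $O(\eps^2)$ constant depends only on bounds for $\phi_0$, $C$, and $G_\eps$ on the compact set $\BB \times \{|y|\le\eta\}$.

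Finally, I would spell out why $\eta$ can be chosen independent of $\eps$: all the expansions above are Taylor expansions of analytic functions on a fixed compact polydisc $\{|x|<\delta\} \times \{|y|<\eta\}$ that does not shrink with $\eps$, so the remainder bounds are $\eps$-independent; the only place $\eps$ enters the domain is through the substitution $y \mapsto \eps y$, which for small $\eps$ keeps us well inside the fixed polydisc.

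The main obstacle I anticipate is making rigorous the claim that $\|D\phi_0^{T_0}(x,0)\| = 1$ (not merely that its spectral radius is one): a symplectic matrix with eigenvalues on the unit circle need not have operator norm one in the Euclidean norm. The clean fix is to work throughout in an adapted inner product — built from the eigenbasis of $A(0)$ and the symplectic structure, varying smoothly with $x$ — in which the restriction of $D\phi_0^{T_0}(x,0)$ to $X_2$ is orthogonal, so that the norm is exactly one there; equivalently one accepts a harmless factor $1 + O(|x|)$ and absorbs it, which is consistent with the role this $\delta$ plays (as the footnote to Lemma~\ref{uniform} indicates, $\delta$ is already constrained so that the modulus-one structure holds). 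A secondary point requiring care is bookkeeping the two sources of the $O(\eps)$ term — the genuine perturbation $\eps(CX + G_\eps)$ and the $y$-linearization evaluated at $y=\eps(\cdot)$ — and checking that the statement's $\beta$, defined purely via $D_yD\phi_0$, is indeed the full leading coefficient; I would verify this by noting that along the LSM the perturbation's contribution to $\|D\phi_\eps^{-1}\|$ at order $\eps$ is itself governed by the same normal-direction derivative data, so the two contributions are not independent but are exactly what the single constant $\beta$ encodes in the normalized coordinates.
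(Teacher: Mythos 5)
Your proposal is correct and follows essentially the same route as the paper: both rely on the normal form \eqref{time1y} from Lemma~\ref{uniform}, resolve the operator-norm-versus-spectral-radius issue by passing to an adapted basis (the paper conjugates by $\bigl(\begin{smallmatrix}1 & Q(x)\\ 0 & 1\end{smallmatrix}\bigr)$ with $Q(x)=(A-1)^{-1}B$ to decouple the blocks and then uses the simple modulus-one eigenvalues of $A(x)$, which is the same move as your adapted inner product), then Taylor-expand in $y$, evaluate at $y=\O(\eps)$, and pass to the inverse. Your explicit bookkeeping of the two sources of the $\O(\eps)$ term is, if anything, more careful than the paper's one-line "Taylor-expanding $D\phi_\varepsilon$ in $y$."
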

\begin{proof}
The proof is an immediate consequence of Lemma \ref{uniform}. Indeed, thanks to the form of $\phi$ in \eqref{time1y} and the simplicity of the eigenvalues of $A$ for small enough $x$ (non of which is equal to one), we can find matrices $Q$, depending on $x$, such that $D\phi$ can be block-diagonalized as
\begin{equation}
D\phi(x,y)=\left(\begin{matrix}
1 & Q(x)\\ 0 & 1
\end{matrix}\right)\left(\begin{matrix}
1 & 0\\ 0 & A(x)
\end{matrix}\right)\left(\begin{matrix}
1 & Q(x)\\ 0 & 1
\end{matrix}\right)^{-1}+\mathcal{O}(|y|),
\end{equation}
just by choosing $Q(x):=(A-1)^{-1}B$. Therefore, since all eigenvalues of $A$ are simple for $|x|<\delta$, it follows from Taylor-expanding $D\phi_\varepsilon$ in $y$ that
\begin{equation}
\|D\phi_\varepsilon(x,\varepsilon y)\|=1+\beta\varepsilon+\mathcal{O}(\varepsilon^2),
\end{equation}
for all $|x|<\delta$ and $|y|<\eta$, for some $\eta$ independent on $\varepsilon$.  By the standard implicit function theorem, the claim follows. 
\end{proof}
	The set up for our main theorem involves four small parameters:
	\begin{itemize}
		\item The parameter $\delta$, which controls the domain of definition of the LSM (and also the domain of definition of the SSM in the perturbation). This parameter will be independent on the dissipation $\varepsilon$.
		\item The parameter $\th$, which controls the aperture of the cone of complex values for $\eps$, domain considered.
		\item The parameter $\tau$ which controls  the size of the complex 
		extension.
		\item The parameter $\eps_0$, which is the maximum value of  $|\eps|$ (as a complex number) for which the results are valid.
	\end{itemize}
	Along the proof, we will specify some smallness conditions on these quantities. 
	It will be important that all these parameters (in particular $\delta$) can be chosen uniformly in the value of the dissipation, $\varepsilon$, so that we obtain results for $\delta$, $\theta$ and $\tau$, which are uniform in $\varepsilon$.\\
We are now ready to formulate informally our main result. 
\begin{theorem}\label{mainThm} Consider the system 
\begin{equation}\label{maininthm}
\dot{X}=LX+N(X)+ \eps CX + \eps G_\eps(X) \equiv F_\eps(X),
\end{equation}
under Assumptions \ref{AssLSC} and Assumption \ref{Asspert}. Assume further that the unperturbed system is Hamiltonian, that the matrix $L$ does not have repeated eigenvalues and that all the eigenvalues are imaginary.\\
Then, for  $\eps$ sufficiently small, there exists an invariant, analytic, two-dimensional manifold $M_{\eps}$ around the origin for the perturbed system \eqref{maininthm}. As $\eps$ converges to zero, the manifold $M_\eps$ converges -- in the sense of analytic manifolds -- to the LSM (of the unperturbed system) in a domain which is independent of $\eps$. Moreover, we have explicit asymptotic expansions to any order in $\eps$, uniformly valid in an $\varepsilon$-independent domain. The manifold is unique among the invariant manifolds that are sufficiently differentiable at the origin. 
\end{theorem}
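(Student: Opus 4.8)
The strategy is the standard two-step scheme for singular perturbation problems based on the parameterization method: first construct a formal approximate solution of the invariance equation \eqref{CH} to high order in $\eps$ on an $\eps$-independent domain, then correct it to an exact solution by a contraction-mapping argument in a weighted analytic norm. For the first step I would write $K_\eps = K_0 + \eps K_1 + \dots + \eps^N K_N$ and $R_\eps = R_0 + \eps R_1 + \dots + \eps^N R_N$, where $(K_0, R_0)$ parameterizes the LSM of the unperturbed Hamiltonian system (normalized via \eqref{suspension} so that $R_0$ generates a rigid rotation of frequency $\omega_0$), and substitute into \eqref{CH}. Collecting powers of $\eps$ yields, at order $\eps^j$, a linear \emph{cohomological equation} for $(K_j, R_j)$ of the form $DF_0(K_0)\,K_j - DK_0\,R_j - (\text{lower-order known terms}) = 0$, i.e. a linearized invariance equation around the LSM with an explicit inhomogeneity built from $K_0, \dots, K_{j-1}$, $C$, and the Taylor expansion of $G_\eps$. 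Because the dynamics on the LSM is a rotation, this is effectively a small-divisor-free equation over the circle: one decomposes along the $X_1$ and $X_2$ directions, uses the non-resonance of the $\mu_k$ with $\ri\omega_0$ to invert the $X_2$-component, and absorbs the (finitely many) resonant Fourier modes in the $X_1$-component into the unknown $R_j$, exploiting the underdeterminacy noted in Remark~\ref{underdetermined}. Solvability here is where the Hamiltonian hypothesis and Lemma~\ref{uniform} (all eigenvalues of $A(x)$ on the unit circle for $|x| \le \delta$) enter: they guarantee that the relevant operators are invertible with bounds \emph{uniform} in $x$ over a fixed disk $B_\delta^2$, hence the $K_j$ are analytic on a fixed $\tilde B_\delta^2$, not a shrinking one.

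For the second step I would fix the approximate solution $K_\eps^{\le N}$ from step one, write the true solution as $K_\eps = K_\eps^{\le N} + \Delta$ with $\Delta \in \A_{\delta,d}$ (and, correspondingly, adjust $R_\eps$), and rephrase \eqref{CH} as a fixed-point equation $\Delta = \mathcal{T}_\eps(\Delta)$. The natural form of $\mathcal{T}_\eps$ comes from the Lyapunov--Perron / graph-transform idea adapted to the parameterization method: iterating the invariance relation one period forward, $\Delta$ is expressed in terms of $\phi_\eps^{-1}$ (the inverse time-$T_0$ map), the internal dynamics $R_\eps$ conjugated to a near-rotation $s_\eps$ on $B_\delta^2$, and the error term $E_\eps = F_\eps \circ K_\eps^{\le N} - DK_\eps^{\le N} R_\eps = \mathcal{O}(|\eps|^{N+1})$ on the fixed domain. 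The key contraction estimate is \eqref{goodcontraction}: composition with the contraction $s_\eps$ (whose Lipschitz constant is $\le 1 - c\alpha|\eps| + \mathcal{O}(|\eps|^2)$ by Assumption~\ref{Asspert}) gains a factor $\Lip(s_\eps)^d$, while the expanding amplification from $D\phi_\eps^{-1}$ is $1 + \beta|\eps| + \mathcal{O}(|\eps|^2)$ by Lemma~\ref{Dyeps}; the condition $\beta < d\alpha$ from \eqref{defL} makes the product $\le 1 - (d\alpha - \beta)|\eps| + \mathcal{O}(|\eps|^2) < 1$, so $\mathcal{T}_\eps$ is a contraction on a ball of radius $\sim C|\eps|^{N+1}/(d\alpha-\beta)|\eps| = C'|\eps|^N$ in $\A_{\delta,d}$, uniformly in $\eps \in \C_\th$ with $|\eps| < \eps_0$. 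Taking $N \ge 1$ already gives $\Delta \to 0$, so $M_\eps \to M_0$ in the analytic sense on the fixed domain; the asymptotic expansion to order $N$ is exactly $K_\eps^{\le N}$. Analyticity of $M_\eps$ follows because all operations preserve analyticity in $z$; joint analyticity in $(z,\eps)$ for $\eps \in \C_\th$ follows from the last remark in the "Spaces of functions" subsection (uniform $\A_{\delta,d}$-limits of jointly analytic functions are jointly analytic). Uniqueness among sufficiently differentiable invariant manifolds follows from the standard uniqueness statements for the parameterization method quoted in Remark~\ref{underdetermined}, applied to $DF_\eps(0)$, whose spectral gap for $\eps \ne 0$ controls the differentiability order $d$.

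\textbf{Main obstacle.} The crux is step one: showing the cohomological equations at each order are solvable with bounds uniform in $x$ over the \emph{fixed} disk $B_\delta^2$, rather than a disk shrinking with $\eps$. Naively, the linearized operator degenerates as one approaches the edge of the LSM's domain of analyticity, and the resonant $X_1$-block is only solvable after the right normalization. This is precisely what Lemma~\ref{uniform} is designed to supply — the Hamiltonian structure forces the transverse multipliers $A(x)$ to stay on the unit circle on $|x| \le \delta$, keeping the block-diagonalizing change of frame $Q(x) = (A(x)-1)^{-1}B(x)$ bounded and the $X_2$-block invertible uniformly — and Example~\ref{non-uniform} shows the estimate genuinely fails for merely energy-preserving (non-Hamiltonian) systems. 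A secondary technical point is bookkeeping the complex domains: one must ensure the compositions with $\phi_\eps^{-1}$ and $s_\eps$ in $\mathcal{T}_\eps$ map $\tilde B_\delta^2$ into itself for all $\eps \in \C_\th$, $|\eps| < \eps_0$, which dictates the admissible relation between $\delta$, $\tau$, $\th$, and $\eps_0$ and is where the cone geometry \eqref{Cdomain} (with $\Re\eps > 0$ ensuring genuine contraction) is used.
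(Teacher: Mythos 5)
Your proposal is correct and follows essentially the same two-step route as the paper: a formal expansion in $\eps$ leading to order-by-order cohomological equations solved by absorbing the energy-direction obstruction into $R_j$ (the paper's Sections~\ref{sec:cohomology}--\ref{sec:Fourier}), followed by the contraction $\mathcal{T}_\eps$ on a ball in $\A_{\delta,d}$ centered at the approximate solution, with the balance $\beta < d\alpha$ yielding a contraction factor $1-(d\alpha-\beta)|\eps|+\O(|\eps|^2)$. The one inaccuracy is attributional: the Hamiltonian hypothesis and Lemma~\ref{uniform} are not what make the formal step uniformly solvable on the fixed disk (the paper explicitly notes the formal calculations need no Hamiltonian structure, only that the Floquet multipliers stay away from $1$ by nonresonance and continuity); they are needed in the second step, where the unit-modulus transverse multipliers give $\|D\phi_\eps^{-1}\| = 1+\beta\eps+\O(\eps^2)$ via Lemma~\ref{Dyeps}.
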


The precise formulation 
of the main result is the following Theorem~\ref{mainprop}.

\begin{theorem}\label{mainprop} 
Consider the system 
\begin{equation}\label{maininthm2}
\dot{X}=LX+N(X)+ \eps CX + \eps G_\eps(X) \equiv F_\eps(X) ,
\end{equation}
under Assumptions \ref{AssLSC} and Assumption \ref{Asspert}.  Assume further that the unperturbed system is Hamiltonian, that the matrix $L$ does not have repeated eigenvalues and that all the eigenvalues are imaginary satisfying
\begin{equation}\label{no-resonances}
\mu_k - \mu_l  \ne m \, 2 \pi \ri \omega_0    \quad \forall k,l = 1,\ldots n-2, k \ne l \forall m \in \mathbb{Z}.
\end{equation}

Let $\delta$ be the maximal radius for which the flow map of the unperturbed part of system \eqref{maininthm} can be written according to Lemma \ref{uniform} and let $\beta$ be the minimal upper bound of the inequality
\begin{equation}
\|D_yD\phi_0(x,0)\|\leq \beta,
\end{equation}
for all $|x|<\delta$, as in Lemma \ref{Dyeps}. Choose any $d$ that satisfies 
\begin{equation}
\beta<d\alpha,
\end{equation}
where $\alpha$ is as in Assumption \ref{Asspert}.\\

Then, there is a sequence of functions 
$K_j: B_\delta \rightarrow \complex^n$ (extending to functions from $\BB$) and 
$R_j: B_\delta \rightarrow B_\delta$ (extending also to functions from $\BB$), such that $K_j(0)=0$, $R_j(0)=0$, $DK_0(0)$ being the embedding from $\real^2$ to the real part  $X^1_\eps$ and such that 
\begin{equation}
DR_0(0)  = \begin{pmatrix} 0 & \omega_0  \\ -\omega_0   &  0\end{pmatrix},
\end{equation}
with the following properties:
\begin{enumerate}
\item The sequences $K_j$ and $R_j$ solve the equation \eqref{CH} in the sense of 
formal power series, i.e., for any $N \in \nat$, setting 
\begin{equation}
\begin{split} 
K^{\le N}_\eps(z)=\sum_{j=0}^N K_j(z) \eps^j,\quad R^{\le N}_\eps(z)  =   \sum_{j=0}^N R_j(z) \eps^j,
\end{split}
\end{equation}
we have that
\begin{equation}\label{firstconclusion} 
\| F_\eps(K_\eps^{\le N}(\cdot ))  -  DK_\eps^{\le N} (\cdot)R_\eps^{\le N}(\cdot) \|_{\A_{\delta,d}} 
\le C_N |\eps|^{N+1},
\end{equation} 
for all $\eps$ small enough.
\item
For $N \ge d$, where again $d$ is as in \eqref{defL}, there exists a unique 
$K_\eps \in \mathcal{A}^{real}_{\delta, d}$ such that
\begin{equation}
 F_\eps(K_\eps(z)) =   DK_\eps(z)R^{\le N}_\eps(z)
\end{equation}
for all $\eps \in \C_\th$ with $\varepsilon$ and $\th$ small enough. 
\item Furthermore, for $\eps \in \mathcal{C}_\th$ (in particularly for all $\eps >0$)  small enough, we have
\begin{equation}\label{secondconclusion} 
\| K^{\le N}_\eps - K_\eps \|_{\A_{\delta,d}} \le C |\eps|^{N},
\end{equation} 
for some constant $C>0$. 
\end{enumerate}

\end{theorem}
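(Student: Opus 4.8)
The plan is to prove Theorem~\ref{mainprop} in two stages, matching the organization announced in Section~\ref{sec:organization}: first construct formal asymptotic expansions in $\eps$ order by order, then close the argument with an a-posteriori fixed-point theorem in the weighted space $\A_{\delta,d}^{real}$ that shows a true solution lies near the truncated expansion.

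\emph{Stage 1 (formal expansions, conclusion (1)).} I would first normalize the unperturbed vector field via the time-rescaling \eqref{suspension}, so that on the LSM the dynamics is rigid rotation with frequency $\omega_0$; by the footnote after \eqref{suspension} this keeps the system Hamiltonian. Using the parametrization of the LSM from Theorem~\ref{exLSC}, I set $K_0$ to be the embedding of $B_\delta^2$ onto $M_0$ (with $DK_0(0)$ mapping onto $X_1^\eps$) and $R_0$ the rotation field with $DR_0(0)=\bigl(\begin{smallmatrix}0&\omega_0\\-\omega_0&0\end{smallmatrix}\bigr)$, so that \eqref{CH} holds exactly at $\eps=0$. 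Then I expand $F_\eps = F_0 + \eps F_1 + \cdots$, plug $K_\eps^{\le N}$, $R_\eps^{\le N}$ into \eqref{CH}, and collect powers of $\eps$. At order $\eps^j$ one obtains a linear \emph{cohomological equation} for the pair $(K_j,R_j)$ of the schematic form
\begin{equation}
DF_0(K_0)\,K_j - DK_0\,(R_j\circ \mathrm{id}) - (DK_j)\,R_0 = (\text{known from }K_0,\ldots,K_{j-1}),
\end{equation}
i.e., a transport equation along the rotation flow $R_0$. Because the dynamics on $M_0$ is a rigid rotation of frequency $\omega_0$, solvability is governed by Fourier analysis in the angle variable: the Fourier modes of the right-hand side must be matched, and the resonant modes (the ones for which the rotation operator is not invertible) are absorbed into the freedom in $R_j$, exactly the underdeterminacy exploited in Remark~\ref{underdetermined}. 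The key point making every step solvable is the non-resonance condition \eqref{no-resonances} together with $\mu_k/(\ri\omega_0)\notin\integer$ from Assumption~\ref{AssLSC}: these guarantee the relevant small-divisor-free operators are boundedly invertible on the $y$-component, so each $K_j,R_j$ is analytic on $\BB$ with $D^j$-vanishing as required. A finite induction (no convergence of the series is claimed — cf.\ the footnote on $\C_\th$) then yields \eqref{firstconclusion} by Taylor's theorem with remainder in the $\A_{\delta,d}$-norm.

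\emph{Stage 2 (true invariant manifold, conclusions (2)--(3)).} Here I would work with the time-$T_0$ map $\phi_\eps$ rather than the flow, reducing invariance of $\mathrm{range}(K_\eps)$ under $F_\eps$ to a functional equation $\phi_\eps\circ K_\eps = K_\eps \circ S_\eps$, where $S_\eps$ is the time-$T_0$ map of $R_\eps^{\le N}$ on $B_\delta^2$. Write $K_\eps = K_\eps^{\le N} + \Delta$, with $\Delta \in \A_{\delta,d}^{real}$ the unknown correction, and rearrange into a fixed-point equation $\Delta = \mathcal{T}_\eps(\Delta)$. The contraction factor comes from two places: composition on the right with $S_\eps$, which (by the rotation normalization) is a near-isometry that I can arrange to be a contraction of Lipschitz constant $1-c\alpha\eps + O(\eps^2)$ on $B_\delta^2$, so by \eqref{goodcontraction} it contracts the $\A_{\delta,d}$-norm by $(1-c\alpha\eps)^d$; and the linearized normal dynamics $D\phi_\eps$, whose norm by Lemma~\ref{Dyeps} is $1+\beta\eps+O(\eps^2)$ in the $y$-directions at the relevant scale $y=O(\eps)$. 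The net Lipschitz constant of $\mathcal{T}_\eps$ is therefore of order $(1+\beta\eps)(1-c\alpha\eps)^d + O(\eps^2) = 1 + (\beta - dc\alpha)\eps + O(\eps^2)$ for an appropriate constant $c$ tied to the rotation, which is $<1$ for small $\eps>0$ precisely because of the choice $\beta < d\alpha$ in \eqref{defL} (this is where Assumption~\ref{Asspert}, $\alpha>0$, is indispensable, and the examples of Section~\ref{sec:examples} show it cannot be dropped). Since \eqref{firstconclusion} gives that $\mathcal{T}_\eps(0)$ is $O(|\eps|^{N+1})$ in $\A_{\delta,d}$, and with $N\ge d$ the contraction deficit $O(\eps)$ beats the source term, the contraction mapping theorem on a ball of radius $O(|\eps|^{N})$ in $\A_{\delta,d}^{real}$ produces a unique fixed point $K_\eps$ satisfying \eqref{secondconclusion}. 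Working over $\eps\in\C_\th$ rather than $\eps>0$ only requires checking that $\Re(\beta-dc\alpha)\eps<0$ persists, which holds for $\th$ small; joint analyticity in $(z,\eps)$ for $\eps\in\C_\th\setminus\{0\}$ follows from the last remark of Section~1.2 since the iterates are jointly analytic and the limit is taken in $\A_{\delta,d}$. Uniqueness among sufficiently differentiable invariant manifolds is the standard parametrization-method statement, cf.\ \cite{de1997invariant,Cab2003,CABRE2005444}, once $d$ is taken large enough relative to the spectrum of $DF_\eps(0)$.

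\emph{Main obstacle.} The delicate point is the second stage: because the contraction is only $1 - O(\eps)$ — the hallmark of this singular perturbation, the manifold is a \emph{slow} manifold, not a normally hyperbolic one — one cannot simply invoke standard invariant-manifold theorems, and the whole argument hinges on the bookkeeping that the gain from right-composition, amplified to the $d$-th power by the weighted norm \eqref{analyticnorm}, strictly dominates the loss $1+\beta\eps$ from the normal linearization at the scale $y=O(\eps)$. Getting the constant $c$ (the effective contraction rate of $S_\eps$ on the disk, coming from $\alpha$ through the perturbed eigenvalue) correct, uniformly in $|x|<\delta$ and uniformly for $\eps$ in the cone $\C_\th$, and verifying that the $O(\eps^2)$ error terms are genuinely uniform on the $\eps$-independent domain $\BB$, is the technical heart of the proof; this is exactly what Lemma~\ref{uniform} (uniform modulus-one spectrum of $A(x)$ for $|x|\le\delta$, valid in the Hamiltonian case) and Lemma~\ref{Dyeps} are set up to deliver.
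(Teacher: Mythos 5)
Your proposal follows essentially the same route as the paper: the same two-stage structure (order-by-order cohomology equations along the Lyapunov orbits with the energy-direction obstruction absorbed into $R_j$, then a contraction for the correction in $\A_{\delta,d}^{real}$ via the time-$T_0$ map, with the $(1+\beta\eps)(1-c\alpha\eps)^d$ bookkeeping that exploits $\beta<d\alpha$), and your Fourier treatment of the cohomology equation is the alternative the paper itself presents in Section~\ref{sec:Fourier} alongside its primary variation-of-parameters/Floquet argument. The only step you elide is the standard upgrade from invariance under the time-$T_0$ map to invariance under the full flow, which the paper handles by observing that $\phi^s_\eps\circ K_\eps\circ r^{-s}_\eps$ is another normalized fixed point and invoking uniqueness.
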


\begin{remark}
The polynomials $K^{\le N}_\eps$ and $R^{\le N}_\eps$, satisfying the invariance equation up to order $N$ in $\varepsilon$, i.e., with a small error, will be obtained through formal calculations. The approximate solutions will satisfy the invariance equations for all $\eps$, small enough, in a complex ball. In fact, the Assumption that the system is Hamiltonian is not needed for the formal calculations. If we want, however, to have formal solutions that also satisfy the invariance equation to a sufficiently high order in $x$, we have to assume a normal form of the kind \eqref{normalform}, i.e., a suitable non-resonance condition.\\
The smallness condition in $|\eps|$ needed for the second conclusion in Theorem \eqref{mainprop}, however, may depend upon $N$.  This is a reasonable limitation since we do not expect that the formal sums $\hat{K}_\eps = \sum_{j=0}^\infty K_\eps^j$ and $\hat{R}_\eps =\sum_{j=0}^\infty R_\eps^j$ to converge in the sense of series of analytic functions. In practice, one can choose an $N$ which is optimal for the goals at hand.
\end{remark}

\begin{remark} 
\label{rem:negativeeps}
The approximate solutions $K^{\le N}_\eps$ and $R^{\le N}_\eps$ solve the invariance equation up to a very small error for all complex $\eps$ small, in particular, also for $\eps< 0$ and small. 
Then, these give approximately unstable manifolds. Whether these ghost manifolds correspond to actual invariant manifolds, is not obvious. In the case that the $\mu_k$ are all imaginary, by reversing the direction of time and changing the sign of $\eps$ we can can apply Theorem~\ref{mainprop} to obtain slow unstable manifolds for $\eps \in -\mathcal{C}_\th$.\\
These unstable SSMs for $\eps< 0$  are smooth continuations of the stable SSMs for $\eps>0$. They are of physical interest, for example, in systems with active media or in periodic perturbations. 
\end{remark}

For subsequent arguments, it will be important that the approximate solutions obtained in the first conclusion of Theorem \eqref{mainprop} solve equation \eqref{CH} approximately in a neighborhood of the origin of size $\delta$, which is independent of 
$\eps$. The procedure to find the solutions will be a global perturbation argument that depends on the known solutions of \eqref{CH} for $\eps = 0$ given by the LSM.\\
In a second step, we will show that these approximate solutions can be corrected to true solutions.  We take $R^{\le N}_\eps$ as the solution, but we need to correct $K^{\le N}_\eps$. Hence, in the second step, the only unknown is $K^\eps$. Again, we note that these functions have to be defined in domains whose size is uniform as the dissipation goes to zero.\\
These two steps are achieved by different methods. The calculation of the approximate solutions in the first step is done using a formal expansion based in a global averaging method. They provide approximations on a fixed neighborhood of $x$. The correction of the approximate solution into a true solution is based on transforming equation \eqref{CH} into a fixed-point problem in a small ball in an appropriately chosen function space, centered at the approximate solution.\\
The fact that we have to divide the proof into two different stages is very typical of singular perturbation theories. In the first stage, we get some perturbation that gets us a flimsy foothold in a neighborhood of the problem and then we switch to a more effective method. 
The second stage includes hypothesis on what is the outcome of the first stage.

\begin{remark}
Note that in the second step of the argument, we need some more assumptions. Notably, we use the condition \eqref{no-resonances} and the fact that the unperturbed system is Hamiltonian. This enters because in the contraction argument, we use Lemma~\ref{uniform}.
\end{remark}

\begin{remark}
The condition \eqref{no-resonances} is equivalent to assuming that $\exp( T_0 \mu_k)$ are all different complex numbers, which is one of the conditions of Lemma~\ref{uniform}. Note that the condition \eqref{no-resonances} can be verified considering only a finite number of $m$. It suffices to verify that there are no repetitions, $|m| \le \frac{1}{\omega_0}\max_{k \ne l} | \mu_k - \mu_l|$. 
 \end{remark}

\begin{remark}\label{goodanalysis} 
Both $K_\eps, R_\eps$ are unknowns of the full problem. In the first stage, we deal with both unkowns to find approximate solutions. In the second step, we take  $R_\eps = R^{\le N}_\eps$, so that the only unknown in the second state is the $K_\eps$. This is possible because we take advantage of the underdeterminacy of the equation. See Remark~\ref{underdetermined}. The fact that the second stage -- mathematically the most delicate -- has only one unknown, is an important advantage. 
\end{remark}

\begin{remark}We note that, by the non-resonance conditions, the normal forms up to order $d$ for any solution are determined. By the linearization theorem in \cite{sternberg1957local}, 
any possible $R_\varepsilon$ can be written as $R_\varepsilon=(Dh_\varepsilon)^{-1}R_\varepsilon^{\leq N}\circ h_\varepsilon$ in a neighborhood. Hence, taking advantage of \eqref{underdetermined}, it would suffice to take $R_\varepsilon^{\leq N}$ in a neighborhood. Of course, being conjugate in a neighborhood is not enough for our purposes, since we want that the flow is defined in a neighborhood uniform in $\varepsilon$ and $h_\varepsilon$ may fail to do so. The $R_\varepsilon^{\leq N}$ is a good candidate to use since it is defined in a uniform neighborhood. In summary: There is not going to be any advantage to find $R_\varepsilon$ in small scale, but there is a global advantage to keep $R_\varepsilon^{\leq N}$ .
\end{remark}

\begin{figure}[h]

\centering \includegraphics[scale=0.4]{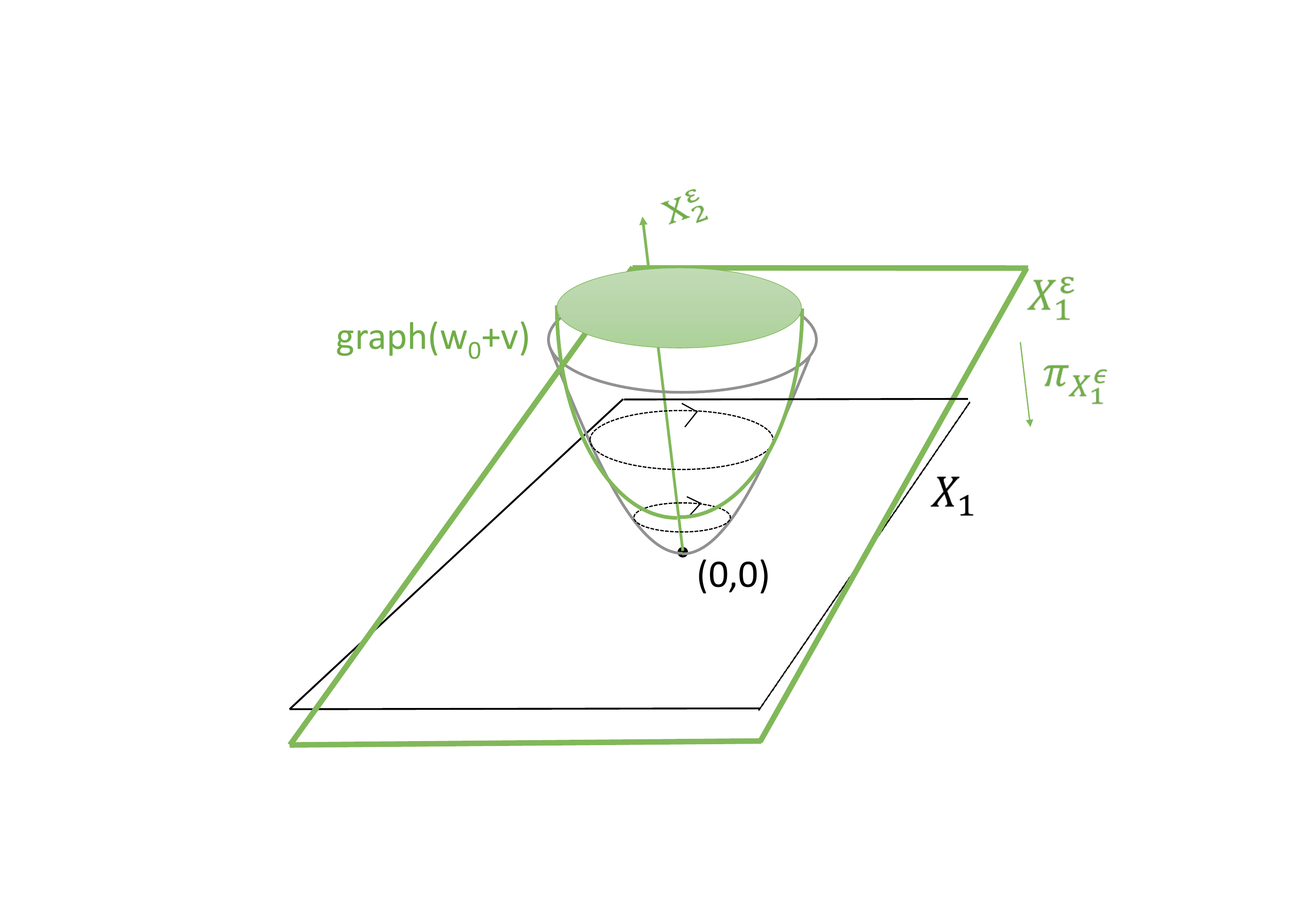}
\caption{The perturbation of the LSM (in green) tangent to the two-dimensional
perturbed subspace $X_{1}^{\eps}$, represented
as the graph of $w_{0}$ plus a small perturbation $x\protect\mapsto v(x,\eps)$.}
\label{ImgPert} 
\end{figure}

\section{Proof of the Main Theorem}
\label{sec:proof}
\subsection{Outline of the Proof}
\label{sec:outlineproof}
In Section~\ref{sec:elementary}, we will derive some immediate consequences of the Assumption ~\ref{Asspert}. After that, in Section~\ref{sec:elementary}, we perform some  preliminary transformations, such as a partial normal form, which will simplify the calculations. In Section~\ref{sec:approximate} we will construct the approximate solutions claimed in part 1 of Theorem~\ref{mainThm}. Again, we emphasize that the main difficulty is that we need to get solutions in a domain of size $\delta> 0$ which is independent of the dissipation parameter $\eps$, so, it has to be a globally defined perturbation expansion.\\
Finally in Section~\ref{sec:fixedpoint} we will reformulate the problem of existence of solutions of \eqref{CH} as a fixed-point problem for an operator defined on the $\A_{\delta,d}^{real}$ spaces introduced before and show that it is a contraction in a small neighborhood of the approximate solution.\\
One subtle point of the contraction argument is that the contraction will be rather weak. Indeed, the contraction  rate will be $ l = 1 - C |\eps| + \O(|\eps|^2)$, for some $C>0$.\\
This weak contraction nevertheless suffices because the approximate solution provided in the first conclusion of Theorem \eqref{mainprop} solves the equation with very high accuracy $O( |\eps|^{N+1})$ (in a domain independent of $\eps$). Then, applying the contraction mapping theorem, we get that the difference between the approximation and the solution is $\mathcal{O}( \frac{1}{1 -l}  |\eps|^{N+1}) = |\eps|^N)$ measured in an appropriate norm for globally defined functions.

\subsection{Domain of attraction, preliminary changes of variables and normalizations}
\label{sec:elementary} 

In this section, we collect some rather elementary results that follow from Assumption~\ref{Asspert}. It seems that these conclusions are the only uses of Assumption~\ref{Asspert} in the proof. Hence, any assumption that leads to them can be used.

\subsubsection{Global domain of attraction} 
\label{sec:global} 

The following lemma will be a consequence of Assumption~\ref{Asspert}. 
It will be important for the transformation of  the invariance equation \eqref{CH} into a fixed point problem.

\begin{lemma} \label{prop:domain} 
Fix any vector field $R_\eps$ that perturbs from a vector field $R_0$ as in the Lyapunov Subcenter Manifold Theorem \ref{exLSC}, satisfying Assumption \ref{Asspert}.\\
Then, for all $\eps \in \C_\th$ (in particular for all  $\eps>0$), we can find a complex domain 
$\BB$, which is mapped into itself by the forward flow of $R_\eps$.
\end{lemma}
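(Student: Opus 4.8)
\textbf{Plan of proof for Lemma~\ref{prop:domain}.} The goal is to produce a complex domain of the form $\BB$ (a complex $\tau$-neighborhood of the real ball $B_\delta^2$) that is forward-invariant under the flow of $R_\eps$, for every $\eps\in\C_\th$. The idea is that $R_\eps$ is a small perturbation of $R_0$, and $R_0$ -- after the normalization \eqref{suspension} -- generates the rigid rotation with frequency $\omega_0$, whose flow preserves all disks centered at the origin. Adding the dissipative perturbation turns the rotation into an inward spiral (this is exactly the content of Assumption~\ref{Asspert}: the eigenvalues of $DR_\eps(0)$ have real part $-\alpha\eps+O(\eps^2)$ with $\alpha>0$), so heuristically the real disk $B_\delta^2$ is mapped strictly into itself; the only work is to make this quantitative, uniform in $\eps$, and valid on the complex fattening $\BB$.

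\textbf{Step 1: a Lyapunov function.} I would introduce the ``energy''-type function $V(z)=|z|^2$ (equivalently, use the action coordinate $I$, which by the Remark after Assumption~\ref{AssLSC} is equivalent to $|z|^2$ near the origin) and compute its derivative along $R_\eps$. Writing $R_\eps(z)=DR_\eps(0)z+O(|z|^2)$ with $DR_\eps(0)=\begin{pmatrix}-\alpha\eps & \omega_0+\alpha_I\eps\\ -\omega_0-\alpha_I\eps & -\alpha\eps\end{pmatrix}+O(\eps^2)$, one gets $\frac{d}{dt}V = -2\alpha\eps\,|z|^2 + O(\eps^2|z|^2) + O(|z|^3)$. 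For the real problem, on the sphere $|z|=\delta$ this is $\le -\alpha\eps\delta^2<0$ once $\eps$ is small enough (the $O(|z|^3)$ term is genuinely smaller than the $O(\eps)$ term only if we also know the nonlinear terms carry a factor of $\eps$ -- and indeed in \eqref{main} the perturbation nonlinearity is $\eps G_\eps$, so $R_\eps - R_0$ is $O(\eps)$ including its nonlinear part, while $R_0$ itself generates a rotation that exactly preserves $|z|$; hence $\frac{d}{dt}V$ along $R_\eps$ equals $\eps\cdot(\text{something bounded, negative definite to leading order})$). So the real ball is strictly forward-invariant.

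\textbf{Step 2: complexification.} For $z\in\BB$ and complex $\eps\in\C_\th$, $|z|^2=\bar z\cdot z$ is no longer a first integral of the unperturbed rotation, but its drift is controlled: the generator of the complexified rotation changes $|z|^2$ only by $O(\tau)$-sized terms near $\partial B_\delta^2$ (more precisely, the imaginary parts of the eigenvalues contribute a term of size $O(\omega_0\tau\delta)$ to $\frac{d}{dt}|z|^2$). Meanwhile the dissipative part contributes $-2\Real(\alpha\eps)|z|^2 + O(|\eps|^2)$, and since $\eps\in\C_\th$ we have $\Real(\alpha\eps)\ge \alpha|\eps|/\sqrt{1+\th^2}>0$. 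Thus, by first choosing $\th$ small, then $\tau$ small relative to (a fixed multiple of) the available contraction per unit $|\eps|$ -- actually one needs $\tau$ small compared to $\delta$ and to quantities not involving $\eps$, using that both the good term and the bad complexification term scale the same way is \emph{not} true; rather the bad term is $O(\tau)$ while the good term is $O(|\eps|)$, so one must instead build the invariant domain adapted to $\eps$. The clean fix: replace the naive ball by a slightly ``tilted''/pinched complex domain, e.g. $\{z: |z|^2 + c\,\Ima(\text{something}) < \delta^2\}$, or simply note that we only need forward-invariance of the flow, and the flow lines of the complexified rotation are bounded curves, so it suffices that the dissipative inward push dominates over one period $T_0$. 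I expect the actual argument to integrate over time $T_0$: $\|\phi^{T_0}_{R_\eps}(z)\| \le (1 - c|\eps| + O(|\eps|^2))\|z\| + (\text{fattening error } O(\tau))$ and then absorb $\tau$ by shrinking $\eps_0$ so that $c|\eps_0|$ dominates... no -- $\tau$ is fixed and small, $\eps\to 0$, so this does not close.

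\textbf{The real obstacle and its resolution.} The genuine subtlety -- and the step I expect to be the crux -- is exactly this tension: the complex fattening $\tau$ is a fixed positive number (it is what gives analyticity estimates via Cauchy), while the dissipative contraction rate $\sim|\eps|$ goes to zero. So a fixed complex ball $\BB$ cannot literally be forward-invariant for the drift introduced by the imaginary part of the rotation. The resolution, which I would carry out, is that the \emph{unperturbed} complexified flow $\phi^t_{R_0}$ does map $\BB$ into itself for all $t$ -- because $R_0$ is the normalized rotation and its complexification is still a linear flow whose orbits through $z$ stay on the circle $\{e^{i\omega_0 t}z\}$, which lies in $\BB$ whenever $z$ does (the complex neighborhood of $B_\delta^2$ is rotation-invariant: $d(e^{i\theta}z, B_\delta^2) = d(z,B_\delta^2)$ since $B_\delta^2$ is a Euclidean ball, hence rotation-symmetric). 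Then $R_\eps = R_0 + \eps(\cdots)$ perturbs this, and the $O(\eps)$ perturbation can push a point out of $\BB$ by at most $O(\eps)$ per unit time; but the same $O(\eps)$ perturbation is, to leading order, strictly inward-pointing on the real part (Step 1), and on the complex part its outward component is still $O(\eps)$ while... the inward budget on $\partial\BB$ off the real axis is only $O(\tau)$ worth of room, gained for free from $R_0$-invariance. So for $|\eps|$ small the net effect stays within that fixed $O(\tau)$ margin: $\phi^{T_0}_{R_\eps}$ moves $\BB$ by $O(|\eps| T_0) = O(|\eps|)$, which is $\ll \tau$ once $\eps_0$ is small, and the real-axis part is pulled strictly in, so $\phi^{T_0}_{R_\eps}(\BB)\subset\BB$; iterating and interpolating over $t\in[0,T_0]$ gives forward-invariance for all $t\ge 0$. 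I would phrase the final estimate as: $\phi^t_{R_\eps}(\BB)\subseteq \{z: d(z,B^2_\delta)<\tau\} = \BB$ for all $t\ge 0$, $\eps\in\C_\th$ with $|\eps|<\eps_0$, choosing $\eps_0$ (and $\th$) small depending on $\tau,\delta,\omega_0$ and the $C^1$-size of $R_\eps - R_0$. The main obstacle is thus bookkeeping the two competing small scales ($\tau$ fixed, $\eps\to0$) correctly and realizing that it is the \emph{exact} rotation-invariance of $\BB$ under $R_0$, not any $\eps$-sized gain, that supplies the needed slack.
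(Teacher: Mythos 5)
Your Step~1 and your identification of the key structural facts are sound and coincide with what the paper exploits: after the normalization \eqref{suspension} the vector field $R_0$ generates a rigid rotation, so its complexification preserves $\BB$ exactly (a real orthogonal matrix is an isometry of $\complex^2$ and maps $B_\delta^2$ to itself), and the whole perturbation $R_\eps-R_0$, nonlinear terms included, carries a factor of $\eps$. The gap is in your final step. From ``$\phi^{T_0}_{R_0}$ preserves $\BB$ exactly'' and ``$\phi^{T_0}_{R_\eps}$ differs from it by $O(|\eps|)\ll\tau$'' one cannot conclude $\phi^{T_0}_{R_\eps}(\BB)\subset\BB$: a point $z$ with $d(z,B_\delta^2)=\tau-\mu$, $\mu\ll|\eps|$, is kept at distance exactly $\tau-\mu$ by the unperturbed flow and can be pushed past $\tau$ by the $O(|\eps|)$ correction. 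The ``$O(\tau)$ worth of room gained for free'' exists only for points well inside $\BB$, not near $\partial\BB$, which is precisely where forward-invariance must be checked; moreover, over times $t\gg T_0$ the accumulated drift $O(|\eps|t)$ is unbounded unless something actively contracts the imaginary directions. Your first attempt (the Lyapunov function), which you abandoned as ``not closing,'' was in fact the right track: writing $z=x+\ri\xi$ with $x,\xi\in\real^2$, the real matrix $DR_\eps(0)$ acts diagonally on real and imaginary parts, so the dissipation contracts $\xi$ at the same rate $\sim\Real(\alpha\eps)\ge(\alpha-O(\th))|\eps|$ as it contracts $x$; the competing outward terms are $O(|\eps|)(O(\delta)+O(\tau))$, so the inward push wins at every boundary point once $\delta,\tau,\th$ are small, independently of $\eps$. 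There is no mismatch of scales: both the good and the bad terms are $O(|\eps|)$, and the comparison is governed by $\delta,\tau,\th$ alone.

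The paper packages exactly this as a contraction estimate for the time-$T_0$ map $r_\eps$. Since $r_0=\mathrm{Id}$ (all Lyapunov orbits have period $T_0$ after the normalization), one has $D^2r_\eps=O(\eps)$ on $\BB$, hence
\begin{equation*}
|Dr_\eps(x)|\le|Dr_\eps(0)|+|x|\,O(\eps)\le 1-\bigl(\alpha-O(\th)-O(\delta)-O(\tau)\bigr)|\eps|+O(|\eps|^2)<1
\end{equation*}
uniformly on $\BB$, so $r_\eps$ is a strict contraction fixing the origin and maps the domain into itself, with all smallness conditions on $\delta,\th,\tau$ independent of $\eps$. The essential difference from your displacement argument is that a pointwise derivative bound degrades nowhere, in particular not on $\partial\BB$. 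Replacing your last paragraph by this derivative bound, or by the complexified Lyapunov inequality $\frac{d}{dt}(|x|^2+|\xi|^2)\le-\bigl(c-O(\delta)-O(\tau)-O(\th)\bigr)|\eps|\,(|x|^2+|\xi|^2)$, closes the proof.
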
 

\begin{proof}
Choose again coordinates $(x,y)$ such that the LSM corresponds to the plane $\{y=0\}$. Using the normalization \eqref{suspension} on the LSM, we can assume without loss of generality, that the flow on the LSM is given by a rigid rotation with period $T_0$ and let $r_0:=r_0^{T_0}$ be the corresponding time-$T_0$-map, i.e., $r_0(x)=x$. In particular, we have that $D^2r_0(x)=0$, for all $|x|< \delta$. Denoting the time-$T_0$ map of the perturbed vector field $R_\varepsilon$ as $r_\varepsilon$, it immediately follows that 
\begin{equation}\label{D2r}
D^2r_\varepsilon(x)=\mathcal{O}(\varepsilon),
\end{equation}
 for all $|x|<\delta$.\\
Let $\Lambda_\eps$ be the linear part the time-$T_0$ map of the flow generated by $R_\eps$, i.e.,
\begin{equation}
\Lambda_\eps=Dr_\varepsilon(0).
\end{equation}
By Assumption \ref{Asspert}, we obtain that
\begin{equation}
\sigma( \Lambda_\eps)=e^{\pm\ri\omega_0-\alpha\eps \pm \ri \alpha_I \eps+
\O(\eps^2)}.
\end{equation}
If $\eps \in \C_\th$, $\eps$ is a perturbation of $|\eps|$ in the sense that $\eps - |\eps| = O(\th)|\eps|$ and we can change $\eps$ into $|\eps|$ up to an error which is controlled by $\th$. It follows that
\begin{equation} \label{estLambda}
\begin{split} 
& | \Lambda_\eps| = 1 - (\alpha + \O(\th)) |\eps| +\O(|\eps|^2), \\
& | \Lambda_\eps^{-1}| = 1+ (\alpha + \O(\th))  |\eps| +\O(|\eps|^2), \\
\end{split}
\end{equation}
also because of Assumption~\ref{Asspert}.\\
We can therefore estimate
\begin{equation}
\begin{split}
|Dr_\varepsilon(x)|&\leq|Dr_\varepsilon(0)|+|Dr_\varepsilon(x)-Dr_\varepsilon(0)|\\
&\leq  1 - (\alpha + \O(\th)) |\eps| +\O(|\eps|^2)+|x|\mathcal{O}(\varepsilon)\\
&\leq 1- (\alpha + \O(\th) +\mathcal{O}(\delta))|\eps|+\O(|\eps|^2),
\end{split}
\end{equation}
for $|x|<\delta$, where we have used the (complex) mean-value theorem, as well as the estimates \eqref{estLambda} and \eqref{D2r}. In particular, if $\delta$ and $\theta$ are small enough, the time-$T_0$ map $r_\varepsilon$ is a contraction with contraction factor
\begin{equation}\label{defgamma}
\gamma:=1- (\alpha - \O(\th) -\mathcal{O}(\delta))|\eps|+\O(|\eps|^2)<1.
\end{equation}   
Therefore, the time -$T_0$ map of the flow in a ball of radius $\delta$ has a derivative which agrees with the above up to $ \O(\delta)$ in $B^2_\delta$ and up to  $\O(\delta) + \O(\tau)$ in 
$\BB$, cf. (\ref{deftau}).  Hence, we can get that the spectrum is bounded away from $1$ for all $\delta$ sufficiently small and the conditions of smallness for $\delta$ can be taken independently of $\eps$.
\end{proof}

\begin{remark}
Notice that the above argument essentially uses that the real part of the contraction factor changes with a leading order comparable with $|\eps|$. We anticipate (see Section~\ref{sec:examples}) that, if the contraction was moving more slowly and the nonlinear terms were moving still with $|\eps|$, it would be possible to find periodic orbits in arbitrary small neighborhoods, for small $|\eps|$.  In \cite{de1997invariant}, it was observed that these periodic orbits provide an obstruction to the existence of manifolds with sufficiently high differentiability, in particular, analytic manifolds.\\ 
Of course, even if the real part of the contraction changed more slowly that $|\eps|$, say $\O(|\eps|^2)$, we could recover the result of the global domain of attraction by assuming 
properties of the non-linear terms and the rest of the proof could go through.\\
In many practical problems, the dissipation is a global phenomenon that does not get stopped by the non-linear terms, hence in many practical systems, the conclusions of Proposition~\ref{prop:domain} hold even if Assumption~\ref{Asspert} does not hold (but other global assumptions do). It would be interesting to formulate other general physically meaningful assumptions that account for these phenomena. 
\end{remark} 

\subsubsection{Preliminary changes of variables}
\label{changes}

In this section, we perform some analytic changes of variables that simplify our formulas. Under Assumption \ref{no-resonances}, we have that
	\begin{equation} 
	\label{nonres} | k \lambda_\eps - \mu^\eps_j | \ge  \kappa > 0, 
	\quad  \forall k  \in \integer, \quad |\eps | \le \eps_0,
	\end{equation}
for  some $\eps_0,\kappa>0$ and $1\leq j\leq n-2$.\\
This follows because the imaginary parts for $ k \lambda_\eps - \mu^\eps_j $ differ by a constant for $|k| > d$. We can check that for $k$ small, we cannot generate any new resonances. For large $k$ they cannot be generated either, since then the imaginary parts of $k \lambda_\eps$ and $\mu^\eps_j$ are very different.
\medskip

First, we can adjust our coordinates $(x,y)\in X_1\oplus X_2$ such that the LSM invariant for $F_0$ corresponds to one of the coordinates. That is to say, in this coordinate system, we have that the embedding is just $K_0(x) = (x, 0) $. The plane $\{y=0\}$ then defines an invariant manifold for the vector field 
\begin{equation}
F_0(x, y) = ( R_0(x), \tilde{A}(x,y)),
\end{equation}
 where $(x,y)\mapsto \tilde{A}(x,y)$, the direction transversal to the $R_0$ field that satisfies $\tilde{A}(x,0) = 0$.\\
We can therefore 
arrange that 
\begin{equation}\label{normalization}
DK_\varepsilon(0) = \Pi_{X_1},
\end{equation}
is a fixed isometric embedding from $\real^2$ (or $\complex^2$) into the invariant space (which we arrange to be the first components of the space). The normalization \eqref{normalization} indicates that the embedding $K_\varepsilon$ will be in the affine space $\Pi_{X_1} + \mathcal{A}_{\delta,d}^{real}$.\\
Note that, in particular, in these coordinates, $DK_\eps(0)$ will be independent of $\eps$. In the same vein, we can arrange  that $D R_\eps(0)$ is  the constant map corresponding to the eigenvalues $\lambda^{\pm}_\eps$. In contrast with $DK_\eps(0)$, $DR_\eps(0)$ does depend on $\eps$.
\medskip

We can also make sure that the conserved quantity on the LSM is just given by $|x|^2$. From the results in \cite{kelley1969analytic,SiegelM}, we know that there exists a system of coordinates, such that $R_0$ takes the form
\begin{equation}\label{R0} 
R_0(x)=\left(\begin{matrix}
0 & \Omega(|x|^2)\\ -\Omega(|x|^2) & 0
\end{matrix}\right)x,
\end{equation}
with $\Omega(0)=\omega_0$, for $\omega_0\in\real$, being the linear frequency of the Lyapunov mode.\\
Proceeding as in the theory of normal forms \cite{murdock2003normal,takens1971partially}, for any $N\in\nat$, we can take advantage of the assumed absence of resonances, cf. \eqref{nonres}, and change variables polynomially in $x$ and analytically in $\eps$, such that, separating the variables into $x$, the tangent to the LSM and $y$, the tangent to the complementary space, we have
\begin{equation} \label{normalform}
F_\eps(x,y) = (L + \eps C)(x,y) + A_\varepsilon(x)y + \O(|y|^2|x|^{N+1}),
\end{equation}
for some matrix $A_\varepsilon$. Note that the above normal form can be done uniformly for 
all $\eps$ sufficiently small.
 
\subsection{Approximate solutions to the invariance equation \eqref{CH}}
\label{sec:approximate} 

\subsubsection{Construction of approximate solutions} 
In this section, we construct the sequences $K_j, R_j$ introduced in Theorem~\ref{mainprop}, based on perturbation theory (generalized averaging theory). We again recall that the important feature is that the size of the domain where the approximation is obtained is independent of the size of the dissipative parameter.\\
The perturbation theory we use is valid with uniform bounds in a neighborhood in the 
$x$ variables which is independent of $\eps$. The perturbation theory uses essentially the assumption that there is a LSM consisting of periodic orbits and is basically a mildly sophisticated version of the averaging method on periodic orbits.  Note that this global perturbation theory is very different from the perturbation theory based on normal forms which just matches expansions near the origin of coordinates; these local expansions are very good near the origin, but the region they describe depends on $\eps$.  For our purposes, it is crucial that we can obtain estimates in a region of $x$ which is independent of $\eps$. \\
We remark that the methods used in this section are rather elementary extensions of averaging theory and that they work just as well for finitely differentiable vector fields. 

\medskip 
By assumption, equation \eqref{CH} is satisfied for $\eps=0$ by the Lyapunov Subcenter Theorem, i.e.,
\begin{equation}
F_0(K_0(x))=DK_0(x)R_0(x).
\end{equation}
Formally expanding $F_\eps$, $K_\eps$ and $R_\eps$ in $\eps$,
\begin{equation}\label{approx}
\begin{split}
&F_\eps(x)=\sum_{j=0}^NF_j(x)\eps^j+\mathcal{O}(\eps^{N+1}),\\
&K_\eps(x)=\sum_{j=0}^NK_j(x)\eps^j+\mathcal{O}(\eps^{N+1}),\\
&R_\eps(x)=\sum_{j=0}^NR_j(x)\eps^j+\mathcal{O}(\eps^{N+1}),\\
&F_0(0)=0,\quad K_0(0)=0,\quad R_0(0)=0,
\end{split}
\end{equation}
we see
that equation \eqref{CH} at order $\eps$ becomes
\begin{equation}\label{Oeps}
DF_0(K_0(x))K_1(x)+F_1(K_0(x))=DK_0(x)R_1(x)+DK_1(x)R_0(x).
\end{equation}
The terms $F_0$ and $F_1$ are given by the right-hand side of the differential equation, while $K_0$ and $R_0$ are given by the shape and the dynamics of the LSC, respectively. The unknowns of \eqref{Oeps} are $K_1$, $R_1$ which are, respectively, the first order corrections to the shape of the invariant manifold and to the dynamics on it.\\
More generally, we claim (see the justification below) that expanding to higher order in $\eps$ and matching terms of order $\varepsilon$, we are led to 
\begin{equation}\label{Oepsn}
(DF_0)(K_0(x)) K_n(x)-(DK_n)(x)\, R_0(x)=F_n(K_0(x))+DK_0(x)R_n(x)+S_n(x),
\end{equation}
where $S_n(x)$ is a polynomial expression involving $K_1,...,K_{n-1}$ and $R_1,...,R_{n-1}$, as well as their derivatives.\\
We will study the equations \eqref{Oepsn} recursively. We will show that if  $K_1,...,K_{n-1}$ and $R_1,...,R_{n-1}$ -- and hence $S_n$ -- are known, we can find $(K_n, R_n)$ solving \eqref{Oepsn}.\\
Hence, we will consider \eqref{Oepsn} as an equation for $K_n$, $R_n$ when all the other quantities are known. We anticipate that the solutions $K_n, R_n$  will not be unique, which is consistent with the fact that the equations \eqref{CH} are underdetermined, cf. Remark~\ref{underdetermined}.

\medskip
To prove \eqref{Oepsn}, we first note that 
\begin{equation}
\begin{split}
\left.\frac{\partial^n}{\partial\eps^n}\Big(DK_\eps(x)R_\eps(x)\Big)\right|_{\eps=0}
&=\left.\sum_{j=0}^n\frac{\partial^j DK_\eps}{\partial\eps^j}(x)
\frac{\partial^{n-j}R_\eps}{\partial\eps^{n-j}}(x)\right|_{\eps=0}\\
&=DK_n(x)R_0(x)+DK_0(x)R_n(x)+\sum_{j=1}^{n-1} DK_j(x)R_{n-j}(x).
\end{split}
\end{equation}
Next, we prove inductively that
\begin{equation}
\frac{\partial^n}{\partial\eps^n}F_\eps(K_\eps)=\frac{\partial^n F_\eps}{\partial\eps^n}(K_\eps)+\frac{\partial F_\eps}{\partial x}\frac{\partial^nK_\eps}{\partial\eps^n}+P_n(K_\eps,F_\eps,...),
\end{equation}
where $P_n$ is a polynomial in $F_\eps$, $K_\eps$ and their derivatives up to order $n-1$ in $\eps$. For $n=1$, we obtain \eqref{Oeps}. Assuming the formula for $n$, we obtain
\begin{equation}
\begin{split}
\frac{\partial^{n+1}}{\partial\eps^{n+1}}F_\eps(K_\eps)&=\frac{\partial}{\partial\eps}\left(\frac{\partial^n F_\eps}{\partial\eps^n}(K_\eps)+\frac{\partial F_\eps}{\partial x}\frac{\partial^nK_\eps}{\partial\eps^n}+P_n(K_\eps,F_\eps,...)\right)\\
&=\frac{\partial^{n+1} F_\eps}{\partial\eps^{n+1}}(K_\eps)+\frac{\partial^{n+1} F_\eps}{\partial\eps^n\partial x}(K_\eps)\frac{\partial K_\eps}{\partial\eps}+\frac{\partial F_\eps}{\partial x}\frac{\partial^{n+1}K_\eps}{\partial\eps^{n+1}}\\
&+\frac{\partial^2 F_\eps}{\partial x\partial\eps}\frac{\partial^nK_\eps}{\partial\eps^n}+\frac{\partial^2 F_\eps}{\partial x^2}\frac{\partial^nK_\eps}{\partial\eps^n}\frac{\partial K_\eps}{\partial\eps}+DP_n(K_\eps,F_\eps,...)\cdot\left(\frac{\partial K_\eps}{\partial\eps},\frac{\partial F_\eps}{\partial\eps},...\right),
\end{split}
\end{equation}
where the last bracket contains derivatives in $\eps$ up to order $n$. This proves \eqref{Oepsn}.
\medskip

Now we turn to analyzing \eqref{Oepsn}. Equation \eqref{Oepsn} defines a linear, first-order system of PDEs of the form
\begin{equation}\label{cohomology}
M(x)K_n(x)-DK_n(x)R_0(x) + \eta(x) = 0,
\end{equation}
where the unknowns are $K_n$ and all the other elements, i.e., $M$, $R_0$ and $\eta$, are known with $K_n:\real^2\to\real^N$, $R_0:\real^2\to\real^2$ and $\eta:\real^2\to\real^N$
(again, all of them extend to a complex domain). 
In our case, we have that
\begin{equation}\label{etadefined}
\begin{split}
&\eta(x)= - DK_0(x)R_n(x)-F_n(K_0(x))-S_n(x),\quad M(x)=DF_0(K_0(x)).
\end{split}
\end{equation}
The coefficients in the left-hand side of equation \eqref{cohomology} are the same for all $n$, i.e., $M(x)$ and  $R_0(x)$  do not depend upon $n$.\\
We will develop a theory for general $\eta$ and discover that, to have a solution, $\eta$ has to satisfy some constraints. For our problem, $\eta$ contains the unknown $R_n$. Hence, we will determine $R_n$ so that $\eta$ satisfies the compatibility conditions for the existence of $K_n$ and hence, we can determine $K_n$. Similar procedures (one of the unknowns is determined so that compatibility conditions are met) happen in many perturbative theories in mechanics, cf. \cite{BogoliubovM,  Giacaglia}. They seem to have originated in the perturbative expansions in Celestial Mechanics. We will apply this procedure only a finite number of times (bigger than $d$ in \eqref{defL}). Our only goal is to produce an approximate solution and we do not need (indeed we do not expect) that the series converges.

\subsection{Study of the cohomology equation \eqref{cohomology}}
\label{sec:cohomology} 

In this section, we analyze the cohomology equation \eqref{cohomology}. The main result is to identify the obstructions in $\eta$ for the existence of solutions $K_n$. Later, we will study how to apply this obstructions to find $R_n$ solving \eqref{Oepsn}.\\
Notice that \eqref{cohomology} simply says that if $x(t)$ is a solution of $\dot x = R_0(x)$ then
\begin{equation}\label{periodic} 
D K_n(x(s)) R_0(x(s)) = \frac{d}{ds}  K_n(x(s))   = M(x(s)) K_n(x(s)) + \eta(x(s)) 
\end{equation} 
The solutions of $\dot x = R_0(x)$ are precisely the periodic Lyapunov orbits. Hence,
\eqref{periodic} is a linear equation with periodic coefficients and periodic forcing. If $K_n$ has to be a function of the point $x(s)$ it has to be a periodic function of time of the same period as the orbit $x(s)$.\\ 
Hence, we study the  system:
\begin{equation}\label{char}
\begin{split}
&\frac{dx}{ds}(s)=R_0(x(s)),\\
&\frac{du}{ds}(s)=  \eta(x(s))+ M(x(s))u(s).
\end{split}
\end{equation}
The first equation in \eqref{char} can be written in polar coordinates as
\begin{equation}\label{polar}
\begin{split}
&\frac{d\rho}{ds}(s)=0,\\
&\frac{d\th}{ds}=-\Omega(\rho(s)^2),
\end{split}
\end{equation}
for $x(s)= \Big(\rho(s)\cos(\th(s),\rho(s)\sin{\th(s)}\Big)$ and its solution is given by
\begin{equation}
x(s)=\rho_0\Big(\cos(\th_0-\Omega(\rho_0^2)s),\sin(\th_0-\Omega(\rho_0^2)s)\Big).
\end{equation}
Note that the equations \eqref{char} are just the equations of variation around Lyapunov orbits 
subject to some forcing. See \cite{JorbaV98,Masdemont05,Capinski12} for numerical treatments of \eqref{char} in celestial mechanics. Note also that equations at all orders are equations of the same form.\\
Since the trajectory of $s\mapsto x(s)$ is periodic of period $T(\rho_0)=\frac{2\pi}{\Omega(\rho_0^2)}$ in order for $s\mapsto u(s)$ to define a function of $x(s)$, the solution to the second equation in \eqref{char} must be periodic.\\
More precisely, for a given $\rho_0$, we have to find a condition on the function $s\mapsto\eta(x(s;\rho_0))$, such that the equation
\begin{equation}\label{charu}
\begin{split}
&\frac{du}{ds}(s;\rho_0)= \eta(x(s;\rho_0))+M(x(s;\rho_0))u(s),\\
&u(0;\rho_0)=u_0(\rho_0),
\end{split}
\end{equation}
has a $T$-periodic solution. Since the equation  is a non-homogeneous 
linear equation, the standard variation of parameters formula gives

\begin{equation}\label{perturbation}
u(t)=\Phi(t;t_0)\left[u_0 + \int_{t_0}^t \Phi(s;t_0)^{-1}\eta(s)\, ds\right],
\end{equation}
where $\Phi(t;t_0)$ is the fundamental solution of 
the non-autonomous homogeneous problem
\begin{equation}\label{homogeneous}
\frac{d}{dt} \Phi(t; t_0) = M(x(t)) \Phi(t,t_0); \quad \Phi(t_0; t_0) = \Id
\end{equation}
For typographical reasons, we omit the dependence on $\rho_0$. \\
To have a periodic solution, i.e., $u(t+T)=u(t)$, it suffices to show that $u(t_0)=u(t_0+T)$, where $T$ is the period of the Lyapunov orbit or, explicitly,
\begin{equation}\label{eqy0}
u_0 =\Phi(t_0+T;t_0)\left[u_0 + \int_{t_0}^{t_0+ T} \Phi(s;t_0)^{-1}\eta(s)\, ds\right].
\end{equation}
Rearranging \eqref{eqy0} gives
\begin{equation}\label{eqy0rearranged}
\begin{split} 
\left[ \Phi(t_0+T;t_0) - \Id \right] u_0 &= 
\Phi(t_0+T;t_0)\int_{t_0}^{t_0+ T} \Phi(s;t_0)^{-1}\eta(s)\, ds \\
& = \int_{t_0}^{t_0 + T} \Phi(t_0 +T, s) \eta(s) \, ds.
\end{split} 
\end{equation}
Since $\Phi(t_0 + T, t_0)$ is the linearization of the unperturbed flow under the unperturbed flow, we see that the spectrum of $\Phi(t_0 + T, t_0)$ will contain two eigenvalues $1$ (one of them corresponding to the direction of the flow and another one corresponding to 
the conservation of the energy).\\
To analyze the $n-2$ remaining Lyapunov exponents, we observe that, if we fix a sufficiently small neighborhood in the Lyapunov manifold, the matrix $M(x(s;\rho_0))$ will be a small perturbation of the constant matrix $L$ and that the period $T$ is close to $ 2 \pi/\omega_0$. Hence the spectrum of $\Phi(t_0 + T; t_0)$ will be close to the spectrum of $\exp( \frac{2 \pi}{\omega_0} L)$.\\
Putting the two remarks together, we conclude that in a neighborhood of the origin, the spectrum of $\Phi(t_0 + T; t_0)$  contains two eigenvalues which are exactly $1$ and the remaining $n -2$ are close to $\exp( 2 \pi \ri \frac{\mu_k}{\omega_0})$, which is bounded away from one because of the non-resonace assumptions for Lyapunov orbits.\\
Equation \eqref{eqy0rearranged}, therefore, can be solved if and only if, the right-hand side has no components over the eigenspaces corresponding to the eigenvalues $1$ (identified 
before as the direction of the flow and the gradient of the energy). This is the obstruction in the solution of $K_n$. In the following, we ill show that we can choose the $R_n$'s so that this is soluble.\\
We proceed as in the proof of the Lyapunov theorem and take a surface of section in a coordinate axis in the $X_1$ space. This eliminates the eigenvalue $1$ corresponding to 
the flow. Another way to interpret this is to observe that all the points in the periodic orbit are solutions, so that we always get a one-dimensional family of solutions.\\
On the other hand, for the existence of a solution of \eqref{eqy0rearranged}, there is a true obstruction for $\eta$. We need that the projection to the right-hand side along the direction where the energy vanishes. Note that this is a linear function in $\eta$. We will deal with this obstruction in the next paragraph.

\subsection{Algorithm for the iterative step of 
perturbative expansions} 
\label{sec:iterativestep} 

Using the theory of the cohomology equation as derived in the previous
section, we can device an algorithm to solve to solve recursively the 
equations for $K_n, R_n$. To do
so, we first determine $R_n$ so that $\eta$ in the right-hand side of
\eqref{etadefined} satisfies the constraints needed for the existence
of $K_n$. Then, we determine $K_n$ using the formulas
\eqref{cohomology}. Note that in this selection, the dependence on the periodic orbit 
becomes very important.\\
By choosing the energy $I$ as one coordinate, one coefficient of the matrix $M$ is identically zero, due to energy conservation in the unperturbed system. Also, in our system of coordinates, the matrix $DK_0$ is 
the identity, so that, using \eqref{etadefined}, the condition for the existence of a periodic orbit becomes
\begin{equation}\label{obstructionsolved} 
\int_0^{\frac{2 \pi}{\Omega(\rho_0)}} \Pi_E  R_n ( x(s, \rho_0) )\, ds   = 
-\int_0^{\frac{2 \pi}{\Omega(\rho_0)}}\Pi_E 
(F_n( x(s, \rho_0)  +  S_n(x(s; \rho_0) )\, ds,
\end{equation}

where $\Pi_E$ denotes the projection in the direction of 
the energy with respect to the eigenvalues.\\ 
Due to the underdetermined nature of the the invariance equation \eqref{CH}, we may choose several functions $R_n$. For the sake of simplicity, we choose it to be constant and obtain

\begin{equation} \label{average} 
R_n (\rho_0)  = -\frac{1}{T(\rho_0)}\int_0^{T(\rho_0)}\Pi_E 
(F_n( x(s, \rho_0)  +  S_n(x(s; \rho_0) )\, ds.
\end{equation} 

\begin{remark} 
For $n=1$, \eqref{average} recovers the results of the well known averaging method or the Melnikov theory. 
We can, therefore, interpret $R_n$ for $n>1$ as higher order extensions of Melnikov's method.\\ 
\end{remark}
\begin{remark}
The case of $n = 1$ in the above derivation can also be 
obtained by more familiar averaging arguments or fast/slow 
variables. Since these methods are more familiar in the 
mechanical systems community, we outline them here.\\
We observe that the conserved 
quantity of the unperturbed system  is an slow variable for the
perturbed system, as it evolves with a speed $O(\eps)$. 
Denoting by $x^\eps(t)$ the orbits of the perturbed system, we see that
\[
\begin{split} 
\frac{d}{dt} I( x^\eps(t) )  &= (\nabla I  )(x^\eps(t))  
\cdot F_\eps( x^\eps(t))\\
&= (\nabla I  )(x^\eps(t))  \cdot 
( L x^\eps(t) + N(x^\eps(t) )  + \eps (  C  x^\eps (t)  + G(x^\eps(t))  ) \\
&= \eps (\nabla I  )(x^\eps(t))  (  C  x^\eps (t)  + G(x^\eps(t)) ).
\end{split} 
\]
The change of energy on a cycle is then 
given by 
\[
\begin{split}
\int_0^T \frac{d}{dt} I( x^\eps(t) ) &= 
\eps \int_{0}^T(\nabla I  )(x^\eps(t))  (  C  x^\eps (t)  + G(x^\eps(t)) ) \\
&= \eps \int_{0}^T(\nabla I  )(x^0(t))  (  C  x^0 (t)  + G(x^0(t)) )  
+ O(\eps^2) .
\end{split} 
\]
where we used that, by the smooth dependence on parameters, during the 
finite interval $[0,T]$, we have 
$|x^\eps(t) - x^0(t)| = O(\eps)$.\\
Hence, we approximate, at first order,  the evolution of of the energy over 
a cycle by the average. \\ 
If we seek for the invariant manifold to be given by  selecting 
the normal variables as a function of the energy, we see that since 
this function will be of order $\eps$, the invariant manifold 
will be obtained by selecting the normal variables to be periodic. 
\end{remark}

\subsubsection{Some analytic considerations} 

Now we finish the proof of the first conclusion in Theorem~\ref{mainprop}. 
We examine carefully the formal solutions obtained in the previous section  and 
obtain the desired estimates in the appropriate function space. 
Since the procedure is going to be applied a finite number of 
times, we will not need very detailed estimates.\\
We proceed by induction, assuming that the $K_1,\ldots, K_{n-1}$ and
$R_1,\ldots, R_{n-1}$ are in the appropriate spaces
and we want to conclude the same for the
$K_n$ and $R_n$. \\
First of all, we argue that $K_n$ and $R_n$ are (complex) differentiable 
away from the origin. The differentiability of
the average is clear. The differentiability of $y_0$ --
the initial  condition in the transversal section  -- 
with respect to $\rho$ follows 
from the fact that it is a solution of the implicit equation 
\eqref{eqy0rearranged} whose
coefficients depend differentiability on  the 
radius $\rho_0$. The differentiability with respect to 
the angle is clear for $R_n$ and for $K_n$ it follows because 
it solves a differential equation.\\
This shows that the  function $R_n$ is also differentiable at $\rho_0 =0$. 
For the function $K_n$, we argue similarly. We note that the choice of 
$y_0$ is also differentiable as a function of $\rho_0$ 
(we are inverting a matrix which is clearly differentiable). 
Then, the propagation \eqref{cohomology} is also differentiable along the angle. Again, we use the assumption that the forcing terms vanish to $\O(\rho^{d }) $. This can, indeed, be achieved thanks to the normal form \eqref{normalform} and by noting that, since $F_n$ only has terms of order $d$ and higher, also the composition of $F_n$ with a function hat does not have any constant terms is of order $d$ or higher. By the same token, any algebraic function that involves terms of this form has the desired property, implying that $S_n$ vanishes up to order $\O(\rho^{d }) $.\\
Finally, to obtain the estimates claimed in Theorem \eqref{mainprop}, we 
find that the recursive solution procedure gave functions $K_1, \ldots, K_N$ as well as functions 
$R_1,\ldots R_N$, which  are uniformly differentiable. Also, 
the function
\begin{equation}  \label{residual} 
F_\eps \circ K^{\le N} - DK^{\le N}  R^{\le N}, 
\end{equation}
is differentiable in $\eps$ for fixed $x \in B_\delta^2$.\\
Since the functions $K^{\le N}$ and $R^{\le N}$ have been chosen to match the derivatives with respect to $\eps$ of the invariance equation up to order $N$, the first conclusion of Theorem~\ref{mainprop} follows. 

\begin{remark}
\label{rem:finiteregularityexpansion} 

We note that the results obtained here apply also 
to the case that $F_\eps$ is only finitely differentiable, jointly in 
$\eps$ and $x$.  If the function is $C^\ell$ jointly in $\eps$ and $x$ we see 
that the equations at order  $j$ involve $C^{\ell-j}$ functions
and the algebraic functions of the previously computed solutions. 

The solutions are obtained using only soft arguments such as implicit function 
theorems and hence, the solutions are as smooth as the right hand side. 
Therefore, by induction, we obtain that the $K_j$'s and $R_j$'s are $C^{k -j}$
and that the of the expansion up to order $N$ is $O(|\eps|^{N+1})$ small 
in the sense of $C^{\ell -N -2}$. 
\end{remark}

\subsection{An alternative approach to the theory of the cohomology equation and its analytic estimates using Fourier series}
\label{sec:Fourier} 
In several applications, it is convenient to develop an approach for \eqref{cohomology} based on Fourier series \cite{JorbaV98,Masdemont05}. We recall that a function $\phi(x)$ is analytic in a neighborhood of the origin if and only if it admits an expansion 

\begin{equation}
\phi(x) = \sum_{n_1, n_2 \in \nat} \phi_{n_1,n_2} x_1^{n_1} x_2^{n_2}.
\end{equation}
Using polar coordinates $x_1 = \rho \cos(\th), x_2 = \rho \sin(\th)$ we see that 
\begin{equation}
\phi(x) =  \sum_{n_1, n_2 \in \nat} \phi_{n_1,n_2} \rho^{n_1+ n_2} 
\cos(\th)^{n_1} \sin(\th)^{n_2} 
= :\sum_{n \in \nat} \rho_n  \sum_{k \in \integer, 
|k| \le n} e^{\ri k \th} = :
\sum_{k \in \integer} \phi_k(\rho)  e^{\ri k \th},
\end{equation}
where $\phi_k(\rho)$, $\rho_n$ and $\phi_{n_1,n_2}$ are related through the binomial theorem and finite summation.\\
As it is well known, functions are analytic in a non-trivial domain if and only if the coefficients decrease exponentially. A certain exponential rate in the decrease of the coefficients implies analyticity in a domain and analyticity in a domain implies an exponential rate of decrease of the coefficients. The conditions are not exactly symmetric, but this does not matter for us, since we will only use the procedure a finite number of times.\\
To study \eqref{cohomology}, we can observe that, for each fixed value of $\rho$, the equation \eqref{charu} is a linear periodic equation in the time variable $s$. For sufficiently small $\rho$, the matrix $M$ is a perturbation of a constant coefficient equation. It follows from Floquet theory \cite{Chicone06} that for a fixed $\rho$, we can perform a linear, $T$-periodic change of variables in such a way that the matrix becomes independent of $s$. \footnote{Of course, in the general Floquet theory, we may need to make a $2T$-periodic change of variables, but in our case, for small $\rho$ the differential equation is a perturbation of  the constant equation with the matrix $L$ as a right-hand side, so that the reducibility matrix is periodic and depends analytically on the parameter $\rho$.} Furthermore, the change of variables can be chosen in a way which depends analytically on $\rho$.\\
Hence,  the equation \eqref{cohomology} is equivalent to
\begin{equation}
2 \pi \ri \Omega(\rho) k  \phi_k(\rho)  - A(\rho) \phi_k(\rho) = \eta_k(\rho).
\end{equation}
For $k \ne 0$, the above equation can be solved  because $2\pi \ri \Omega(\rho) k \notin\sigma(A(\rho))$ for all $\rho$ in an small neighborhood. Hence, we just set
\begin{equation}
\phi_k(\rho) = 
(2 \pi \ri \Omega(\rho) k  \phi_k(\rho)  - A(\rho) \phi_k(\rho))^{-1}
\eta_k(\rho).
\end{equation}
For $k=0$, the equation amounts to 
\begin{equation}
A(\rho) \phi_0(\rho) = \eta_0(\rho).
\end{equation}
As indicated, in Section~\ref{changes} we have that there is an eigenvalue zero of $A(\rho)$ corresponding to the change of energy and, by the non-resonance assumption and the perturbation arguments, this is the only zero eigenvalue. Hence we obtain, again, that the obstruction is just that the average of the change of energy of $\eta$ vanishes.\\
There is a constant $C>0$ such that 
\begin{equation}
\|2 \pi \ri \Omega(\rho) k  \phi_k(\rho)  - A(\rho) \phi_k(\rho))^{-1}\| 
\le C.
\end{equation}
Therefore, if $\eta$ satisfies the obstruction and is an analytic function a domain, then the solution of \eqref{cohomology} is analytic in a slightly smaller domain. 

\begin{remark} 
The above Fourier analysis procedure also works for finitely differentiable functions, but the results are weaker than those obtained by the method of integral equations.  We know that if $\phi$  is $C^\ell$, then the Fourier coefficients satisfy $|\phi_n| \le C n^{-\ell}$ for some constant $C>0$. The approximate  converse is that if $|\phi_n| \le C n^{-\ell- \tau}$ for some $\tau > 1$ then $\phi \in C^\ell$.\\
Hence, by working with Fourier coefficients to analyze \eqref{cohomology}, we obtain estimates with $ 1+ \tau$ derivatives less. For the purposes of this section, this is not a fatal loss since we only need to apply it a finite number of times. On the other hand, it would be a very useless estimate for a fixed point argument. One can, however, avoid this shortcoming by working in Sobolev spaces. A comparison between Fourier methods and integral formulas  
for closely related problems appears in \cite{HuguetL13}.
\end{remark}

\subsection{The Fixed-Point Argument}
\label{sec:fixedpoint} 
Throughout this section, we will assume that the unperturbed vector field has been normalized according to \eqref{normalization} and that we have chosen coordinates $(x,y)\in\mathbb{R}^2\times\mathbb{R}^{n-2}$, such that the LSM corresponds to the invariant plane $\{y=0\}$.\\
We start by transforming equation \eqref{CH} into an equivalent form, suitable for a fixed point argument. Let $\phi^t_\eps$, either defined as a function $\phi_\eps^t:\real^n\to\real^n$ or $\phi_\eps^t:\complex^n\to\complex^n$, be the flow map associated to the vector field $F_\eps$, i.e.,
\begin{equation}
\frac{d}{dt} \phi^t_\eps = F_\eps \circ \phi^t_\eps, \qquad \phi^0_\eps =
{\rm Id}.
\end{equation}
Analogously, let $r^t_\eps$, either defined as a function $r_{\eps}^t:\real^n\to\real^n$ or $r_{\eps}^t:\complex^n\to\complex^n$, be the flow associated to the vector field $R_\eps$, i.e.,
\begin{equation}
\frac{d}{dt} r^t_{\eps} = R_\eps \circ r^t_\eps, \qquad r^0_\eps =
\text{Id}.
\end{equation}

To simplify notation, we denote the corresponding time-$T_0$ maps, cf. \eqref{normalization}, as $\phi_\eps = \phi^{T_0}_\eps$ and $r_\eps = r^{T_0}_\eps$. \\
The invariance equation \eqref{CH} is then equivalent to 
\begin{equation}\label{invflow}
\phi^t_\eps(K_\eps(x))=K_\eps(r^t_\eps(x)), 
\end{equation}
for all $t\geq 0$.\\
Consider equation \eqref{invflow} only for $t = T_0$ and rewrite it as 
\begin{equation} \label{CH2} 
K_\eps(x)=\phi_\eps^{-1} \circ K_\eps(r_\eps(x)).
\end{equation} 
Later, we will show that the solution of \eqref{CH2} also solves \eqref{invflow} and, hence \eqref{CH}. To establish existence of solution of \eqref{CH} we will show that the operator defined by the right-hand side of \eqref{CH2} is a contraction in a ball around the approximate solution produced in Part 1) of Theorem~\ref{mainThm}.\\
As we already have found an approximate solution $K_\eps^{\leq N}(x)$ in both $x$ and $\eps$ \eqref{approx}, we can reformulate \eqref{invflow} as
\begin{equation}
K_\eps^{\leq N}(x)+K^{>N}_\eps(x)=\phi_\eps^{-1}\Big(K_\eps^{\leq N}(r_\eps(x))+K_\eps^{>N}(r_\eps(x))\Big).
\end{equation}
That is to say, $K^{>N}_\eps$ should be a fixed point of the $\eps$-dependent functional
\begin{equation}\label{Taudefined} 
\mathcal{T}_{\eps}(\hK)(x)=
\phi_\eps^{-1}\circ\Big(K_\eps^{\leq N}(r_\eps(x))+ \hK(r_\eps(x))\Big)  -
K_\eps^{\leq N}(x).
\end{equation}
We emphasize that the approximate solution $K_\varepsilon^{\leq N}$ has been obtained on all $x$ in a neighborhood which is independent of $\varepsilon$ since we have just integrate along periodic orbits.\\
We will first show that for all $\eps \in \C_\th$,  $\mathcal{T}_\eps$ defined in \eqref{Taudefined}, maps a ball in $\A^{real}_{\delta, d}$ to itself and is a contraction. After that, we will study the dependence of the fixed point on $\eps$ and show that the fixed point is analytic in $\eps$ for $\eps \in \C_\th$ and that, near zero, it has an asymptotic expansion.  We also recall, see Remark~\ref{rem:negativeeps}, that, by reversing the time, we can also study the case $\eps \in -\C_\th$. \\
In particular, we will obtain that, if we consider $\eps \in \real$ -- the physically more 
interesting case -- we have that the fixed point as a function of $\eps$ is real analytic for $\eps\in \real \setminus \{0\} $ and $C^\infty$ at $\eps = 0$. 

\smallskip
First, we claim that $\mathcal{T}_\eps:B_{\mathcal{A}_{\delta,d}^{real}}^\sigma\subseteq\mathcal{A}_{\delta,d}^{real}\to\mathcal{A}_{\delta,d}^{real}$, for 
\begin{equation}
B_{\mathcal{A}_{\delta,d}^{real}}^\sigma=\{K\in\mathcal{A}_{\delta,d}^{real}:\|K\|_{\mathcal{A}_{\delta,d}}<\sigma\},
\end{equation}
is well-defined.\\
Indeed, by the second statement in Assumption \ref{Asspert}, $\BB$ is mapped into itself by $r_\eps$ for all $\eps \in \C_\th$. Therefore $K(r_\eps(x))$ is well-defined for $|x|<\delta$.
We also note that $D( \phi^{-t}_\eps \circ K^{<N} \circ  r_\eps^t)(0)  = \Id$ 
and that if $K$ vanishes to high order, we get that $ \mathcal{T}_\eps(K)$ also satisfies the normalization of the derivatives \eqref{normalization}.\\
To see that $\mathcal{T}_\eps(K(x))=\O(|x|^{d})$, it suffices to employ the coordinate system presented in Section \ref{changes} and the fact that $K_\eps^{\leq N}$ solves the invariance equation up to order $d$ in $x$. Clearly, $\mathcal{T}_\eps(K)|_{\real^2}\subseteq\real^N$.\\
Since $K_\eps^{\leq N}$ is an approximate solution to \eqref{invflow} up to oder $|\eps|^N$, it follow that 
\begin{equation}
\|\mathcal{T}_\eps(0)\|_{\mathcal{A}_{\delta,d}}=\O(|\eps|^N).
\end{equation}
We will assume that the size of the ball $B_{\mathcal{A}_{\delta,d}^{real}}^\sigma$ in function space is $\sigma=\varepsilon^M$, i.e.,
\begin{equation}
\hat{K}(x)=\mathcal{O}(\varepsilon^M),	
\end{equation}
for all $|x|<\delta$ and for some $M>1$, indicating that the correction to the formal expansions will be small. The exact value of $M$ will be determined in the course of the proof.\\
To estimate the contraction rate of the functional $\mathcal{T}_\eps$, we calculate
\begin{equation}
\begin{split}
\|\mathcal{T}_\eps(\hK_1)&-\mathcal{T}_\eps(\hK_2)\|_{\mathcal{A}_{\delta,d}} =\sup_{z\in \BB}|z|^{-d}\left|\phi_\eps^{-1}\Big(K_\eps^{\leq N}(r_\eps(z))+\hK_1(r_\eps(z))\Big)-\phi_\eps^{-1}\Big(K_\eps^{\leq N}(r_\eps(z))+\hK_2(r_\eps(z))\Big)\right|\\
&\leq \Lip(\phi_\eps^{-1})\sup_{z\in \BB}|z|^{-d}|\hK_1(r_\eps(z))-\hK_2(r_\eps(z))|\\
&\leq \Big[1+(\beta+\mathcal{O}(\theta))|\varepsilon|+\mathcal{O}(|\varepsilon|^2)\Big]\sup_{z\in \BB}|z|^{-d}|\hK_1(r_\eps(z))-\hK_2(r_\eps(z))|.
\end{split}
\end{equation}
 Here, we have used Lemma \ref{uniform} together with \eqref{estDphi-1} and the fact that, in our coordinate system,
\begin{equation}
K^{\leq N}_\varepsilon(z)+\hat{K}(z)=K_0(z)+\mathcal{O}(\varepsilon)=(z,0)+\mathcal{O}(\varepsilon).
\end{equation}
Also, we have used again that $\varepsilon=(1+\mathcal({\theta}))|\varepsilon|$.
To proceed, we estimate the contraction rate of the perturbed reduced dynamics as
\begin{equation}
\begin{split}
\|\mathcal{T}_\eps(\hK_1)&-\mathcal{T}_\eps(\hK_2)\|_{\mathcal{A}_{\delta,d}}\\
&\leq\Big[1+(\beta+\mathcal{O}(\theta))|\varepsilon|+\mathcal{O}(|\varepsilon|^2)\Big]\sup_{z\in \BB}|z|^{-d}|r_\eps(z)|^{d}|r_\eps(z)|^{-d}|\hK_1(r_\eps(z))-\hK_2(r_\eps(z))|\\
&\leq \Big[1+(\beta+\mathcal{O}(\theta))|\varepsilon|+\mathcal{O}(|\varepsilon|^2)\Big]\gamma^d\sup_{z \in \BB}|z|^{-d}|z|^{d}\sup_{z \in \BB}|r_\eps(z)|^{-d}|\hK_1(r_\eps(z))-\hK_2(r_\eps(z))|\\
&\leq \Big[1+(\beta+\mathcal{O}(\theta))|\varepsilon|+\mathcal{O}(|\varepsilon|^2)\Big]\gamma^{d}\|\hK_1-\hK_2\|_{\mathcal{A}_{\delta,d}},
\end{split}
\end{equation}
where $\gamma$  is the contraction factor introduced in 
\eqref{defgamma}.\\
Expanding the contraction factor to leading orders in 
$|\eps|$, we obtain 
\begin{equation}
\|\mathcal{T}_\eps(\hK_1)-\mathcal{T}_\eps(\hK_2)\|_{\mathcal{A}_{\delta,d}} \leq \Big( 1 +(\beta+\mathcal{O}(\theta)-d[\alpha - \O(\th) -\mathcal{O}(\delta)])|\varepsilon|+\mathcal{O}(|\varepsilon|^2 )\Big) \|\hK_1-\hK_2\|_{\mathcal{A}_{\delta,d}},
\end{equation}
which, by \eqref{defL}, implies that $\mathcal{T}_\eps$ is a contraction for $\eps$, $\th$ and $\delta$ small enough and $d$ big enough such that
\begin{equation}
\beta+\mathcal{O}(\theta)-d[\alpha - \O(\th) -\mathcal{O}(\delta)]<0.
\end{equation}

\begin{remark}\label{rem:uniformcontraction}
It will be important for future applications to observe that, after we fix the $\th$, $\delta$ and $\beta$ the contraction is uniform for all values of $\eps \in \C_\th$ that satisfy $0 <  a_- <  | \eps| <  a_+$, for some constants $a_-$ and $a_+$. Of course, these uniform rate of contraction becomes close to $1$ as $a_-$ converges to zero and we cannot obtain a uniform contraction for all $\eps \in \C_\th$.
\end{remark}

To see that $\mathcal{T}_\varepsilon\Big(B_{\mathcal{A}_{\delta,d}^{real}}^\sigma\Big)\subseteq B_{\mathcal{A}_{\delta,d}^{real}}^\sigma$, for some $\sigma=\sigma(\eps)>0$, we observe that, for $\eps$ sufficiently small,
\begin{equation}\label{sigmaest}
\begin{split}
\|\mathcal{T}_\eps(\hK)\|_{\mathcal{A}_{\delta,d}}&=\|\mathcal{T}_\eps(\hK)-\mathcal{T}_\eps(0)+\mathcal{T}_\eps(0)\|_{\mathcal{A}_{\delta,d}}\\
&\leq \|\mathcal{T}_\eps(\hK)-\mathcal{T}_\eps(0)\|_{\mathcal{A}_{\delta,d}}+\|\mathcal{T}_\eps(0)\|_{\mathcal{A}_{\delta,d}}\\
&\leq  \Big( 1 +\Big[\beta+\mathcal{O}(\theta)-d[\alpha - \O(\th) -\mathcal{O}(\delta)]\Big]|\varepsilon|+\mathcal{O}(|\varepsilon|^2 )\Big)\|\hK\|_{\mathcal{A}_{\delta,d}}+\mathcal{O}(\eps^N)\\
&\leq \sigma\Big( 1 +\Big[\beta+\mathcal{O}(\theta)-d[\alpha - \O(\th) -\mathcal{O}(\delta)]\Big]|\varepsilon|+\mathcal{O}(|\varepsilon|^2 )\Big)+C_2|\eps|^N\\
&\leq \sigma,
\end{split}
\end{equation}
for some $C_2>0$. To ensure the last inequality in \eqref{sigmaest}, it suffices to take $\eps$ sufficiently small and 
\begin{equation}
\sigma\geq\frac{C_2 |\eps|^{N-1}}{d[\alpha - \O(\th) -\mathcal{O}(\delta)]-\beta-\mathcal{O}(\theta)-\mathcal{O}(|\varepsilon|^2)}>0.
\end{equation}
To finish the proof of Theorem~\ref{mainThm}, we have to show that the solutions of \eqref{CH2} also solve \eqref{invflow}. Note that the main difference between \eqref{CH2} and \eqref{invflow} is that \eqref{invflow} is only the evolution of $r_\varepsilon$ for one time, while \eqref{CH2} involves the evolution for all times. To achieve this, we use an argument coming from \cite{Cab2003}. If $K_\eps$ is a solution of \eqref{CH2}, for any $s \in\real$ sufficiently small, we have that
\begin{equation}
\begin{split} 
\phi^s_\eps \circ  K_\eps \circ r^{-s}_\eps & =
\phi^s_\eps \circ  \phi_\eps^{-1}  \circ  K_\eps  \circ r_\eps \circ r^{-s}_\eps\\ 
& = \phi_\eps^{-1} \circ   \phi^s_\eps  \circ  K_\eps    \circ r^{-s}_\eps \circ r_\eps \\  
& = \mathcal{T}_\eps(\phi^s \circ K_\eps \circ r^{-s}_\eps),
\end{split} 
\end{equation}
by the flow property of $\phi_\eps^s$. Hence, we obtain that $\phi^s_\eps \circ K_\eps \circ r^s_\eps$ is also a fixed point of $\mathcal{T}_\eps$ and it also satisfies the normalization conditions specified in our main theorem. For $s$ small enough, $\phi^s_\eps \circ K_\eps \circ r^s_\eps$, will be in the domain of uniqueness of the fixed point theorem. Therefore, we obtain that there exists an interval of $s$ such that 
\begin{equation}
\phi^s_\eps \circ K_\eps \circ r^{-s}_\eps = K_\eps,
\end{equation}
which implies \eqref{invflow}.\\
To prove the analyticity in $\eps$ for $\eps \in \C_\th$, we recall that the contraction properties are the same for all the $\eps \in \C_\th$ such that $0 < a_- < |\eps| < a_+$, for some $a_-,a_+>0$, and that around any point, we can find a set $\U$ -- without of loss of generality inside the previous one -- so that for all values in this set, there is a ball of radius $\sigma > 0$ in $\A^{real}_{d,\delta}$ that gets mapped into itself, see Remark~\ref{rem:uniformcontraction}.\\ 
We also observe that if $\tilde K_\eps$ is analytic in $\eps$ for $\eps$ in such a domain, 
we also obtain that $\mathcal{T}_\eps( \tilde K_\eps)$ is also analytic in $\eps$ -- it suffices to apply the chain rule for derivatives. Putting these two remarks together, we obtain that $\mathcal{T}^n_\eps(0)$ is a sequence of uniformly converging analytic functions and hence their limit is analytic in the domain. Therefore, the fixed-point depends analytically on $\eps$.

\begin{remark} The contraction argument also works for the case that the $F_\eps$ is $C^\ell$ if $\ell > d$. The Lipschitz constants of the operators $\mathcal{T}_\eps$ acting on $C^r$-spaces vanishing to order $d$ are estimated in \cite[Proposition 3.2]{de1997invariant} or \cite{Cab2003}. Hence we obtain that the invariant manifolds produced in Theorem~\ref{mainprop} are locally unique under the condition that the manifolds are invariant and $C^r$ for  $r>d$. Note also that, remembering Remark~\ref{rem:finiteregularityexpansion}, we obtain, rather straightforwardly, an analogue for finite regularity of the existence results claimed in Theorem~\ref{mainprop}.\\
The results of Theorem~\ref{mainprop} on regularity with respect to $\eps$ or with respect to parameters seem to be also true, but they seem to require substantial work (which we will not undertake here).
\end{remark}

\begin{remark}
As a note for experts (which most readers may want to postpone) we note that the operator $\mathcal{T}_\eps$ is differentiable in $\hat K$ in $C^\ell$ spaces (note that this is 
not the case with the operators used in the graph transform approach to NHIHM \cite{Fenichel73}). To obtain differentiability with respect to parameters, the main problem is that, in spaces of finite differentiability, the operator $\mathcal{T}_\eps$ is not differentiable with respect to $\eps$., since the formal derivative with respect to $\eps$ would include a term
$D \phi^{-1}_\eps\circ ( K_\eps^{\leq N} + \hK) \circ r^1_\eps D ( K_\eps^{\leq N} + \hK)\circ r^1_\eps D_\eps r^1_\eps $. Indeed, this derivative involves $D\hK$, so that the derivative in a certain $C^\ell$ space would involve a term that can only be controlled in $C^{\ell +1}$.  This is a well known problem for operators involving composition in the left \cite{LlaveO99}.
As a consequence, the standard implicit function theorem does not apply and one needs to develop more sophisticated methods.\\
For the problem of non-resonant manifolds, a very detailed study of the differentiability with respect to parameters, developing specialized implicit function theorems, appears in \cite{Cab2003pam}. The results of \cite{Cab2003pam} show that the differentiable manifold will be continuously differentiable -- in a rather subtle sense -- for $\eps \in (0, \eps_0)$. Since it satisfies the asymptotic expansions, Theorem~\ref{mainprop} shows also it is differentiable at $\eps = 0$. The question of continuity of the derivative with respect to $\eps$ at $\eps = 0$ is 
significantly more subtle and, as far as we know, does not follow from the literature above. 
\end{remark}

\section{Some mathematical examples}
\label{sec:examples} 
In this section, we present some examples that show that some assumptions of Theorem \eqref{mainThm} are necessary to guarantee the existence of an invariant manifold with the specified properties and having a size independent of the dissipation parameter. \\
In particular, we show that if the eigenvalues of the linear part depend upon $\eps^p$, for some $p>1$, existence of a sufficiently differentiable invariant manifold is no longer guaranteed. Indeed, the $\eps$-dependence of the eigenvalue reflects the singular nature of
the problem. Notice that these examples show that \eqref{defL} is not enough to guarantee the existence of an SMM of size independent of the dissipation. One also seems to need some global information on the shape of the dissipation.\\
We will first describe in Example~\ref{firstexample} a very special  system.  After we understand its basic properties, we will show how to modify the system slightly so that there are obstructions for the existence of SMM of size $1$. 

\begin{example}\label{firstexample} 
Consider the following system in polar coordinates  $\rho,\th$ and $u, \psi$.
\begin{equation}\label{example1}
\begin{split}
&\rho' =  -\eps^2\rho   + \eps \rho^2,\\ 
&\th' = \omega,\\
&u' =  a  \eps^2 u, \\
&\psi' =  \gamma, 
\end{split}
\end{equation}
for 
$(\rho,\th,u, \psi)\in\real^+\times \circle^1\times\real^+\times \circle^1$
and $a \in \real_+$ a number which we will adjust to 
get obstructions to the regularity.\\
Equation \eqref{example1} corresponds to a polynomial vector field in Cartesian coordinates, being a mere rotation for $\eps=0$. The linear part of the system \eqref{example1}  changes only by $\eps^2$,  thus violating assumption \ref{Asspert}. 
If $\gamma$ is not an integer multiple of $\omega$, we can verify the hypothesis of LSM. Indeed, in this very simple example, the LSM is just the plain described by $\rho$ and $\th$. For $\varepsilon=0$, the energy is $\rho^2+u^2$, which is non-degenerate. The equation for $\psi$ plays no role and we can avoid it in much of the analysis. For simplicity, we have chosen in the first of \eqref{example1} only a perturbation which is quadratic in $\rho$, which leads to some coincidences which are not really relevant for the analysis and could be removed by adding more complicated nonlinearities. The key point is that the nonlinear term is affected by $\eps$ and takes the trajectories out of the origin while the linear term is affected by $\eps^2$ and 
takes the trajectories in, even if very slowly. For  $\eps> 0$, the origin is a stable, hyperbolic fixed-point. The local  stable manifold of the hyperbolic fixed-point is precisely the plane $u,\th$. Moreover, there exists an unstable limit cycle at $\rho=\eps$, $u = 0$, which we denote as $P$. The Lyapunov exponent of $P$ in the $\rho$-direction is given by $\eps^2$, while in the $u$-direction, it is given by $a\eps^2$ (the variable $\psi$ makes that the eigenvalues  of the return map in the periodic orbit are not just the Lyapunov exponents but that they acquire a phase.)\\
Now that we have understood the geometry of \eqref{example1}, we can construct perturbations that do not have any analytic SMMs near the LSM.  Note that the perturbations we construct now are perturbations of the function $F_\eps(x)$. That is, we will change the function of two variables $\eps$ and $x$, keeping, of course, the normalizations. 
As a concrete way to understand the perturbations of the family we can imagine adding an extra parameter $\nu$ and that we add terms containing $\nu\eps$ to the family.

\begin{remark} 
Note that the analysis so far already shows that the proofs of existence of SMM based on the graph transform will have a problem even to get started, cf. Example 5. By the unperturbed center flow, there  is no domain of size $1$ in $u$ that gets mapped into itself. In analytic regularity, we cannot cut off and extend, so that the graph transform proofs for analytic manifolds have problems even being formulated. Even for $C^2$-regularity, we cannot find domains that are mapped to themselves.\\
Of course, the failure of a method of proof does not automatically imply the failure of the conclusions (e.g. the folding of the manifold gives problems to graph methods but poses 
no problem to parameterization methods), but it certainly gives a hint of the problems. Later, we will see that one can exclude the existence of SMMs of high enough regularity. 
\end{remark} 

We start by observing that for any perturbation,  the origin will be hyperbolic and that the only invariant manifold near the origin close to the stable manifold of the unperturbed system is the unstable manifold. Hence, any SMM has to coincide with the stable manifold near the origin. We also note that any possible SSM of the perturbation has to go through the periodic orbit $P$. This is because the periodic orbit $P$ has a basin of repulsion of size $\eps$. Any invariant object that does not include $P$ has to be out of the basin of repulsion.\\ 
The key observation is that, since $P$ has eigenvalues that do not resonate with those of the complement, there is a non-resonant invariant manifold near $P$. This manifold is unique under the assumption that it is sufficiently differentiable, cf. \cite{de1997invariant, Cab2003,CABRE2005444}. We note that the space spanned by the non-resonant eigenvectors is close to the tangent of the LSM. This non-resonant manifold is persistent under small perturbations. We observe that the non-resonant manifold near $P$ is the only candidate for an invariant manifold close to the LSM for $\eps = 0$. Hence, it is the only candidate for being the SMM of size $1$ near the LSM which is sufficiently differentiable. Therefore, we have two conditions that the SSM or size of order $1$ has to satisfy: It has to agree with the stable manifold near the origin and it has to be the non-resonant manifold near $P$.\\
The coincidence of these two manifolds is an infinite codimension phenomenon in the space of 
maps. If they indeed coincided, a generic small perturbation will break this coincidence, as the perturbation theory for these two manifolds lead to very different Melnikov functions, cf. \cite{de1997invariant} for more details.  Some similar examples for parabolic manifolds with numerical computations appear in \cite{baldoma2007parameterization}. Notice also that the above examples show that finite differentiable manifolds of size of order one cannot exist for general perturbations that do not satisfy our assumptions. Note also that the obstructions cannot be easily seen by studying just the jet of the manifold at the origin. One can also consider slightly more complicated perturbations for the $u$-equation in \eqref{example1} so that one can get  several attracting/hyperbolic periodic orbits appearing with the perturbation.
\end{example}

\begin{remark}
The key to make the example \eqref{example1} work is that the non-linear terms push out of the origin with a coefficient $\eps$ while the linear terms push in with a coefficient $\eps^2$.  The possibility of the example disappears if one adds the extra assumption that all the terms push in. It would be interesting to investigate if indeed under some more strict global assumption one can recover the result of existence of SSMs. This result would be physically interesting since many models of physical interest seem to satisfy the extra assumption.
\end{remark}

\begin{example}\label{non-uniform}
Consider a system of the following form,
\begin{equation}\label{hyper}
\begin{split}
&\dot{x}=Jx-2|y|^2\text{tr}\Gamma(|x|^2)\frac{x}{|x|^2},\\
&\dot{y}=\Gamma(|x|^2)y,
\end{split}
\end{equation}
where $x\in\mathbb{R}^2$, $y\in\mathbb{R}^{2k}, k>1$, $J$ being the standard symplectic matrix and 
\begin{equation}
\Gamma(|x|^2)=\diag(\Gamma_1(|x|^2),...,\Gamma_k(|x|^2)),
\end{equation}
for $2\times 2$ matrices $\Gamma_1,...,\Gamma_k$ such that $\text{tr}(\Gamma(0))=0$ as well as $D_x\text{tr}(\Gamma(0))=0$, implying that $\frac{x\text{tr}\Gamma(|x|^2)}{|x|^2}$ is differentiable at $x=0$. By construction, the scalar function $I(x,y)=|x|^2+|y|^2$ is a conserved quantity for system \eqref{hyper}. Clearly, at the same time, system \eqref{hyper} need not be Hamiltonian. If the matrices $\Gamma_1,...,\Gamma_k$ are now chosen in such a way that $\text{tr}(\Gamma(x))\neq 0$ for $x\neq 0$, then system \eqref{hyper} can admit hyperbolic directions, showing that the assumption a symplectic map in Lemma \ref{uniform} cannot be omitted. The LSM is thus, in this example, a normally hyperbolic invariant manifold with weakening hyerbolicity as $x\to 0$. Note also that the hyperbolicity properties can change in $|x|\approx\varepsilon$ in this model.
\end{example}

\begin{remark} 
The examples above exclude not only the existence of analytic manifolds of size of order one but they also exclude the existence of $C^\ell$ manifolds of size of order $1$ for large enough $\ell$. One question that deserves more exploration is whether one can get invariant manifolds with very low regularity even in the cases where we cannot get very differentiable manifolds. 
\end{remark}

\begin{remark} \label{rem:easy}
The examples above allow us to compare the results obtained in this paper with other possible alternatives and will serve the experts to understand some of our choices.\
To study general perturbations of LSMs (even those not satisfying Assumption~\ref{Asspert}),  we could apply the theory \cite{de1997invariant, Cab2003,CABRE2005444} and, for every value of $\eps \in \complex$ leading to $\Re(\lambda_\eps) < 0$, produce an analytic invariant 
manifold. Since the coefficients can be computed recursively, it is easy to see that these manifolds converge to the LSM in the sense that each of the derivatives at zero converge. It seems that one could obtain similar results using \cite{Moulton20}. Going through the proofs in \cite{de1997invariant, Cab2003,CABRE2005444}, one can see that the fact that the contraction at the origin is weak for small $\eps$ results in the straightforward reading of the results applying only to functions defined in smaller domains as $\eps$ goes to zero.\\
The examples above show that this decrease of the domain is not an artifact of the proof.  Indeed, to obtain manifolds defined in a domain of size independent of $\eps$ we need not only to take advantage of the conserve quantity for the unperturbed case, but also of some sort of global information on the perturbations such as Lemma~\ref{prop:domain}. This seems to require that the Floquet multipliers remain elliptic in a neighborhood.\\
Proofs based only on local information of power series near the origin will only produce 
results in small domains unless one take advantage of subtle cancellations that will depend on other hypothesis. The physical meaning of the manifolds produced in this paper is clear since they serve as long paths that guide the systems to equilibria \cite{GorbanK05, Smooke91}. We do not know what could be the physical meaning of the manifolds obtained by power matching (in the cases when they do not coincide with those in the theorem here). 
\end{remark}

\section{Applications and examples}
\label{sec:applications}
Next, we compare Theorem \ref{mainThm} to the following theorem on the existence of
two-dimensional spectral submanifolds for autonomous dynamical systems
in \cite{Haller2016}:\\
\begin{theorem}\label{SSM} Consider a two-dimensional spectral
subspace $X_{1}$ of the linearization $L+\eps C$ of
equation (\ref{main}) associated
with the eigenvalues $\lambda_{\eps},\overline{\lambda}_{\eps}$.
Assume that $\text{Re}(\sigma(L+\eps C))<0$ and assume
that the non-resonance condition 
\begin{equation}
k\lambda_{\eps}+l\overline{\lambda}_{\eps}\neq\mu,\label{nonresSSM}
\end{equation}
for all $k,l\in\integer$ 
and all eigenvalues 
$\mu\in\sigma(L+\eps\tilde{L})\setminus\{\lambda_{\eps},\overline{\lambda}_{\eps}\}$,
holds. Then, there exists a two-dimensional, analytic, invariant manifold
$W_{\eps}$, tangent to the spectral subspace $X_{1}$ around
the trivial solution $X=0$.
\end{theorem}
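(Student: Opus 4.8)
This is proved in \cite{Haller2016}; here is the strategy, an instance of the parametrization method \cite{de1997invariant,Cab2003,CABRE2005444} and a technically lighter version of the scheme used below for Theorem~\ref{mainprop}. The plan is to solve an invariance equation of the form \eqref{CH}: one looks for an embedding $K_\eps$ of a complex neighborhood of a $2$-disk into $\complex^n$ with $K_\eps(0)=0$ and $DK_\eps(0)$ an isomorphism onto $X_1$, together with a reduced vector field $R_\eps$ on the disk with $R_\eps(0)=0$ and $DR_\eps(0)$ having eigenvalues $\lambda_\eps,\overline\lambda_\eps$, such that $F_\eps\circ K_\eps=DK_\eps\,R_\eps$. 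Since \eqref{CH} is underdetermined, we are free to take $R_\eps$ equal to its linear part $DR_\eps(0)$; the only remaining unknown is then $K_\eps$, and the role of the non-resonance condition \eqref{nonresSSM} is precisely to make the resulting equation for $K_\eps$ solvable.

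First I would construct $K_\eps$ as a formal power series. Matching terms of homogeneous degree $|m|=k+l\ge2$ in \eqref{CH} leads, at each order, to a cohomological equation
\[
\big((L+\eps C)-(k\lambda_\eps+l\overline\lambda_\eps)\,\Id\big)\,K_{\eps,m}=\Phi_m ,
\]
with $\Phi_m$ a polynomial expression in the coefficients already computed. Since $k\lambda_\eps+l\overline\lambda_\eps\notin\{\lambda_\eps,\overline\lambda_\eps\}$ for $k+l\ge2$ and, by \eqref{nonresSSM}, $k\lambda_\eps+l\overline\lambda_\eps\ne\mu$ for every other $\mu\in\sigma(L+\eps C)$, the operator on the left is invertible; moreover the non-resonance yields a uniform bound $|k\lambda_\eps+l\overline\lambda_\eps-\mu|\ge\kappa>0$ analogous to \eqref{nonres}. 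Hence the Taylor coefficients of $K_\eps$ are uniquely determined to all orders.

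The substantive step is convergence. I would truncate the formal solution at a finite order $d$, pass to the time-$T_0$ form $\phi_\eps\circ K_\eps=K_\eps\circ r_\eps$ with $r_\eps=e^{T_0 DR_\eps(0)}$ a linear contraction (since $\Real\lambda_\eps<0$), and recast the correction to the truncation as a fixed point of an operator of the type \eqref{Taudefined} on a small ball in $\A_{\delta,d}^{real}$. The crucial estimate is the weighted-norm contraction \eqref{goodcontraction}: precomposition with $r_\eps$ gains a factor $\|r_\eps\|^{d}$ in the norm \eqref{analyticnorm}, while postcomposition with $\phi_\eps^{-1}$ costs only a factor controlled by the slowest decay rate in $\sigma(L+\eps C)$ -- here the hypothesis $\Real(\sigma(L+\eps C))<0$ is used in an essential way -- so for the weight exponent $d$ large enough relative to the spectral gap (the analogue of \eqref{defL}) the operator is a contraction. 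The contraction mapping principle then gives a unique analytic $K_\eps$; analyticity of $W_\eps$ follows since the fixed point is a locally uniform limit of analytic functions, and the group-property argument of \cite{Cab2003} promotes a solution of the time-$T_0$ equation to a solution of \eqref{CH} for all times.

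The step I expect to be the real obstacle is this convergence argument: one must verify that the gain from composing with the linear contraction $r_\eps$ genuinely dominates the loss from $\phi_\eps^{-1}$ uniformly on a fixed complex neighborhood, which requires careful bookkeeping of the spectral gap, the domain radius $\delta$, and the complexification width $\tau$. The formal-series construction, by contrast, is purely algebraic once \eqref{nonresSSM} is available.
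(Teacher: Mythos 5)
The paper offers no proof of Theorem~\ref{SSM}: it is quoted from \cite{Haller2016} and attributed to the parametrization-method theory of \cite{Cab2003pam}, \cite{Cab2003} and \cite{CABRE2005444}, which is exactly the strategy you sketch (formal series via cohomological equations made solvable by \eqref{nonresSSM}, then a weighted-norm fixed point for the tail), so your proposal matches the intended argument. One small correction: the Lipschitz cost of postcomposing with $\phi_\eps^{-1}$ is governed by the \emph{fastest} decay rate in $\sigma(L+\eps C)$ (the most negative real part, since $D\phi_\eps^{-1}(0)=e^{-T_0(L+\eps C)}$), so the contraction condition is $d\,|\Real(\lambda_\eps)|>\max_{\mu}|\Real(\mu)|$ rather than a bound involving the slowest rate.
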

The proof of the above
theorem is based on a theory of invariant spectral manifolds in Banach
spaces derived in \cite{Cab2003pam}, \cite{Cab2003} and \cite{CABRE2005444}. Since the non-resonance condition in eq. \eqref{nonresSSM}
is weaker than the non-resonance condition \eqref{nonres}, a spectral
submanifold guaranteed by Theorem \ref{SSM} may not converge
to the LSM of Theorem \ref{exLSC} for $\eps\to 0$, as the
following example shows.
\begin{example}(Not all SSMs are perturbed LSMs.)
Consider the dynamical system 
\begin{equation}
\begin{split} & \dot{\xi}=\left(\begin{matrix}-\eps\lambda & 1\\
-1 & -\eps\lambda
\end{matrix}\right)\xi,\\
 & \dot{\eta}=\left(\begin{matrix}-\eps\mu & \alpha\\
-\alpha & -\eps\mu
\end{matrix}\right)\eta+F(\xi),
\end{split}
\label{ex1}
\end{equation}
for the functions $t\mapsto\xi(t),\eta(t)\in\real^{2}$, some
nonlinearity $F(\xi)=\mathcal{O}(|\xi|^{2})$ and the parameters
$\lambda,\mu\in\real^{+},\eps>0,$
as well as $\alpha\in\integer$. The spectrum of the linearization
is given by $\{-\eps\lambda\pm\ri\alpha,-\eps\mu\pm\ri\alpha\}$.
We assume that $\lambda$ and $\mu$ are independent over the integers,
i.e., that there is no $n\in\integer$ such that $\lambda=n\mu$.
We conclude from Theorem \ref{SSM} the existence of an analytic,
two-dimensional, invariant manifold tangent to the $\xi$-plane for any
$\eps>0$. For small enough $\xi$, we can write $\eta$ as
a function of $\xi$, i.e., 
$\eta(t)=H(\xi(t))$, for $H(\xi)=\sum_{|n|=2}^{\infty}H_{n}\xi^{n}$,
$\xi=(\xi_{1},\xi_{2}),n=(n_{1},n_{2})$. Substituting the expression
for $\eta$ as a function of $\xi$ into equation (\ref{ex1}) and
expanding $F(\xi)=\sum_{|n|=2}^{\infty}F_{n}\xi^{n}$, we obtain at
order $|\xi|^{2}$: 
\begin{equation}
\left(\begin{matrix}B_{\eps} & 0 & I_{2}\\
0 & B_{\eps} & -I_{2}\\
-2I_{2} & 2I_{2} & B_{\eps}
\end{matrix}\right)\left(\begin{array}{c}
H_{20}\\
H_{02}\\
H_{11}
\end{array}\right)=\left(\begin{array}{c}
F_{20}\\
F_{02}\\
F_{11}
\end{array}\right),\label{inverse}
\end{equation}
where we $B_{\eps}=\left(\begin{matrix}\eps(2\lambda-\mu) & \alpha\\
-\alpha & \eps(2\lambda-\mu)
\end{matrix}\right)$ and $I_{2}$ denotes the $(2\times2)$-identity matrix. For the choice
$\alpha=2$, the inverse of the matrix in the right-hand side of (\ref{inverse})
will contain terms of order $\eps^{-1}$. Therefore, assuming
that $(F_{20},F_{02},F_{11})\neq(0,0,0)$, we find that the $\mathcal{O}(|\xi|^{2})$-terms
in the expansion of $H$ will blow up as $\eps\to0$. Indeed,
we use the inversion formula for block-matrices, 
\begin{equation}
\left(\begin{matrix}A & B\\
C & D
\end{matrix}\right)^{-1}=\left(\begin{matrix}A^{-1}+A^{-1}B\Delta^{-1}CA^{-1} & -A^{-1}B\Delta^{-1}\\
-\Delta^{-1}CA^{-1} & \Delta^{-1}
\end{matrix}\right),\label{blockinv}
\end{equation}
with matrices $A,B,C,D$ of arbitrary dimensions, $A$ invertible and
$\Delta=D-CA^{-1}B$, to show that the inverse of (\ref{inverse})
contains an entry of order $\eps^{-1}$. Focusing on the lower-right
corner in (\ref{blockinv}), we obtain 
\begin{small}

\begin{equation}
\begin{split}\Delta & =\left(B_{\eps}-(-2I_{2},2I_{2})\left(\begin{matrix}B_{\eps}^{-1} & 0\\
0 & B_{\eps}^{-1}
\end{matrix}\right)\left(\begin{array}{c}
I_{2}\\
-I_{2}
\end{array}\right)\right)\\
 & =(B_{\eps}+4B_{\eps}^{-1})\\
 & =\left(\begin{matrix}\eps(2\lambda-\mu)+4\frac{\eps(2\lambda-\mu)}{\eps^{2}(2\lambda-\mu)^{2}+\alpha^{2}} & \alpha-4\frac{\alpha}{\eps^{2}(2\lambda-\mu)^{2}+\alpha^{2}}\\[0.2cm]
-\alpha+4\frac{\alpha}{\eps^{2}(2\lambda-\mu)^{2}+\alpha^{2}} & \eps(2\lambda-\mu)+4\frac{\eps(2\lambda-\mu)}{\eps^{2}(2\lambda-\mu)^{2}+\alpha^{2}}
\end{matrix}\right)\\
 & =(\eps^{2}(2\lambda-\mu)^{2}+\alpha^{2})^{2}\left(\begin{matrix}\eps(2\lambda-\mu)\Big(\eps^{2}(2\lambda-\mu)^{2}+\alpha^{2}+4\Big) & \eps^{2}(2\lambda-\mu)^{2}\alpha+(\alpha^{3}-4\alpha)\\
-\eps^{2}(2\lambda-\mu)^{2}\alpha+(4\alpha-\alpha^{3}) & \eps(2\lambda-\mu)\Big(\eps^{2}(2\lambda-\mu)^{2}+\alpha^{2}+4\Big)
\end{matrix}\right).
\end{split}
\label{Deltainv}
\end{equation} 
\end{small}
For $\alpha=2$, the $(\alpha^{3}-4\alpha)$-contribution vanishes
and $\Delta$ scales like $\eps$ in its entries, which implies
that $\Delta^{-1}$ scales like $\eps^{-1}$ in its entries.
This shows that the SSMs does not converge to an  LSM in this example
(even in the sense of convergence of Taylor coefficients). 
\end{example} 

\begin{remark}
Note that in this example, the $\eps = 0$ case is resonant and does not verify the hypothesis of the Lyapunov center theorem. Indeed, it is easy to show matching powers of a possible expansion that, indeed, we obtain some equations that have no solution and, hence, that in this case there is no LSM.\\
Therefore the SSM for small dissipation is not generated by the LSM, and the SMM are created by the dissipation and have no chance of surviving in the limit of zero dissipation. As we mentioned in the introduction, as soon as the eigenvalues move into non-resonance, the theory of \cite{Cab2003} guarantees the existence of SSMs whose Taylor coefficients converge to those of the LSM. Hence, the situation exemplified in the example -- no LSM -- is the only one when one could get failure of convergence of the Taylor coefficients.\\
One question that would be interesting to study is what is the domain of convergence of the SSM is in this example. Since the Taylor coefficients blow up, Cauchy estimates indicate that the (complex) domains when the parameterization has size $1$ have to go to zero but we do not know the rate and we do not know what is the nature of the singularities.\\
Extensions of Lyapunov theorem to resonant eigenvalues have been worked out in \cite{weinstein73, moser1976periodic,Duistermaat1972, SchmidtS73} using variational methods and averaging methods. It would also be interesting to apply the results in this case. 
\end{remark}

\subsection{Dissipative perturbations of Hamiltonian systems}

In the following, we discuss the existence of an
SSM, smoothly perturbed from an LSM, for nearly conservative
systems relevant in application to mechanics.\\
Consider the dynamical system 
\begin{equation}
M\ddot{q}+\eps C\dot{q}+Kq+\nabla V(q)=0,\label{mech1}
\end{equation}
for $t\mapsto q(t)\in\real^{n}$, $n\in\nat$, where $M$ is a positive definite
mass matrix, $K$ is a positive definite stiffness matrix, $C$ is a positive definite
damping matrix and $V:\real^{n}\to\real$ a twice continuously-differentiable
function such that $V(0)=0$ and $\nabla V(0)=0$. Introducing the
generalized momentum $p:=M\dot{q}$, we can rewrite system \eqref{mech1}
as a first-order, dissipatively perturbed Hamiltonian system
of the form
\begin{equation}
\left(\begin{array}{c}
\dot{q}\\
\dot{p}
\end{array}\right)=\left(\begin{matrix}0 & M^{-1}\\
-K & -\eps CM^{-1}
\end{matrix}\right)\left(\begin{array}{c}
q\\
p
\end{array}\right)-\left(\begin{array}{c}
0\\
\nabla V(q)
\end{array}\right).\label{mech2}
\end{equation}
For $\eps=0$, equation \eqref{mech1} admits the Hamiltonian
\begin{equation}
H(q,p)=\frac{1}{2}p\cdot M^{-1}p+\frac{1}{2}q\cdot Kq+V(q),\label{eq:hamiltonian}
\end{equation}
such that $\frac{d}{dt}H((q(t),p(t)))=0$ for any solution $t\mapsto(q(t),p(t))$
to (\ref{mech2}) with $\eps=0$. The spectrum of the linearization
of (\ref{mech2}) for $\eps=0$ is given by $\pm\ri\sqrt{\sigma(M^{-1}K)}=:\{\pm\ri\lambda_{k}\}_{1\leq k\leq n}$,
which is purely imaginary thanks to the positive-definiteness of $M$
and $K$. To apply Theorem \ref{exLSC}, we assume that the first eigenvalue of $M^{-1}K$ is non-resonant
with the other eigenvalues, i.e.,
\begin{equation}
\frac{\lambda_{k}}{\lambda_{1}}\notin\integer,
\end{equation}
for $2\leq k\leq n$. We also have to assume that $\nabla^{2}V(Y,Y)>0$
for all $Y\in\eig(\pm\ri\lambda_{1})$. With these conditions, Assumption \ref{AssLSC} is satisfied.\\
 Assumption \ref{Asspert} is satisfied for any
positive definite damping matrix $C$ for which the linear part in (\ref{mech2}) has no repeated eigenvalues. Indeed, the Hamiltonian acts
as a Lyapunov function for system \eqref{mech1}, as $H(0,0)=0$ and 
\begin{equation}
\dot{H}(q,p)=\nabla H\cdot(\dot{q},\dot{p})^{T}=-\eps M^{-1}p\cdot CM^{-1}p<0,
\end{equation}
by the positive-definiteness of $C$. Therefore, the origin is asymptotically
stable and, for $\delta'$ and $\eps$ small enough.

\begin{example}(Nonlinear Elastic Pendulum with Air Drag) Based on Duistermaat
\cite{Duistermaat1972}, consider a two-dimensional elastic pendulum
with mass $m>0$ under the influence of gravity $g>0$. At the initial
position, the mass is assumed to be located at the origin $(0,0)$
and the length of the spring is given by $l_{0}$. We assume a linear
spring constant $k>0$ and a cubic spring constant $K>0$, so that
the potential energy at the position $(q_{1},q_{2})$ is given by
\begin{equation}
U(q)=U(q_{1},q_{2})=mgq_{2}+k[q_{1}^{2}+(l_{0}-q_{2})^{2}]+K[q_{1}^{2}+(l_{0}-q_{2})^{2}]^{2}+U_{0},\label{pot}
\end{equation}
for $U_{0}=-(kl_{0}^{2}+Kl_{0}^{4})$ (cf. Fig. \ref{ImgElastic}).
\begin{figure}[h]
\centering \includegraphics[scale=0.4]{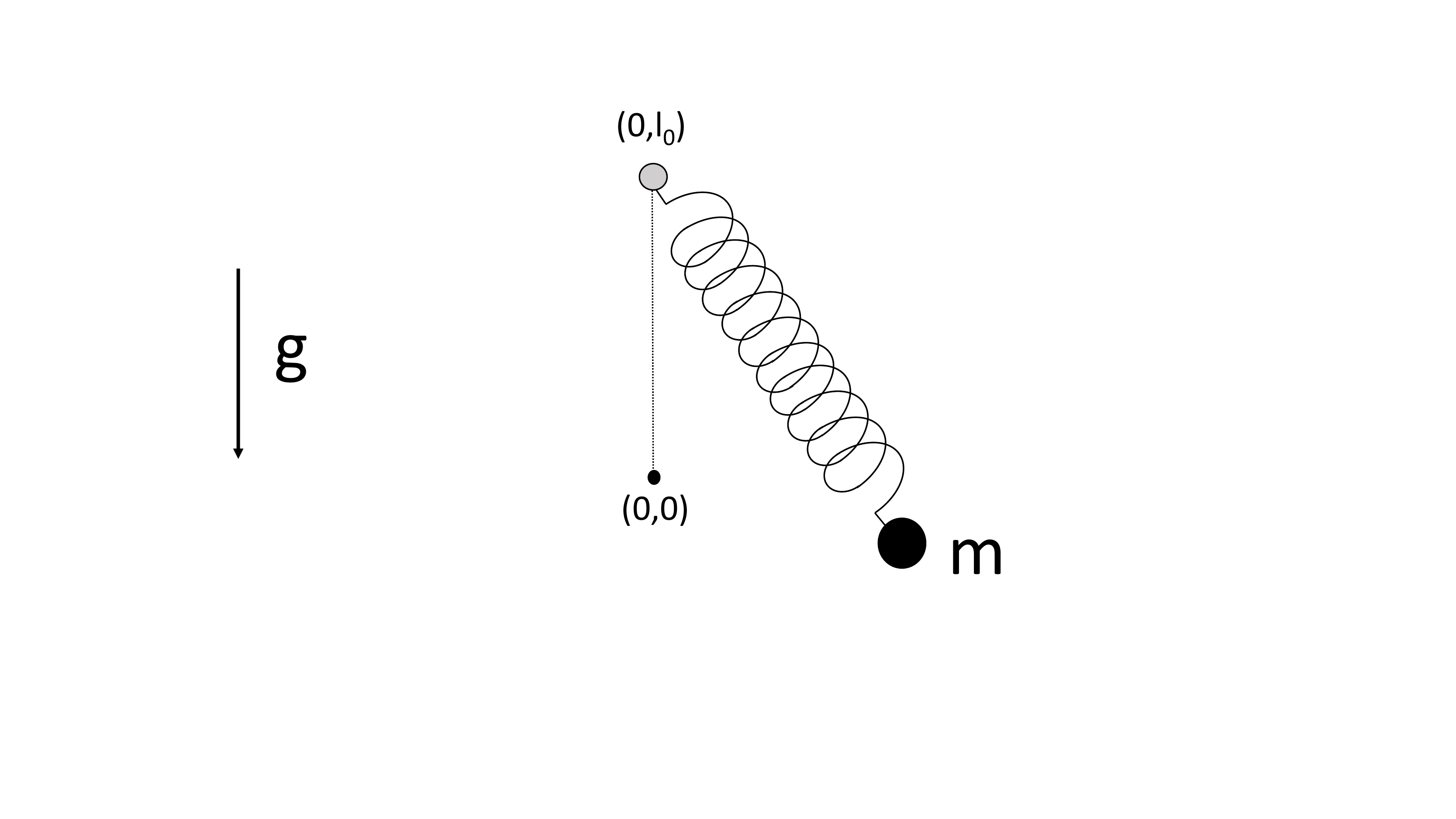}
\caption{Elastic pendulum under the influence of gravity. The initial position
is assumed to be at $(0,0)$, while the initial length is assumed
to be $l_{0}$.}
\label{ImgElastic} 
\end{figure}
The Hamiltonian \eqref{eq:hamiltonian} takes the specific form
\begin{equation}
H(q,p)=\frac{1}{2m}|p|^{2}+U(q),\label{Ham}
\end{equation}
and $H(0,0)=0$ and $\nabla H(0,0)=0$ hold when we choose our parameters
such that $2kl_{0}+4Kl_{0}^{3}=mg$. Assuming a constant, small air
drag $\eps$ (which we consider as the linearization of the general, quadratic air resistance), the equations of motions take the form 
\begin{equation}
\begin{split}\left(\begin{array}{c}
\dot{q}_{1}\\
\dot{q}_{2}\\
\dot{p}_{1}\\
\dot{p}_{2}
\end{array}\right)= & \left(\begin{matrix}0 & 0 & \frac{1}{m} & 0\\
0 & 0 & 0 & \frac{1}{m}\\
-(2k+4Kl_{0}^{2}) & 0 & -\frac{\eps}{m} & 0\\
0 & -(2k+12Kl_{0}^{2}) & 0 & -\frac{\eps}{m}
\end{matrix}\right)\left(\begin{array}{c}
q_{1}\\
q_{2}\\
p_{1}\\
p_{2}
\end{array}\right)\\
 & \quad-\left(\begin{array}{c}
0\\
0\\
4Kq_{1}(q_{1}^{2}-2l_{0}q_{2}+q_{2}^{2})\\
4Kq_{2}(q_{2}^{2}-3l_{0}q_{2}+q_{1}^{2})-4Kl_{0}q_{1}^{2}
\end{array}\right).
\end{split}
\label{pend}
\end{equation}
For $\eps$ sufficiently small, the eigenvalues of the linearization
of (\ref{pend}) are given by 
\begin{equation}
\left\{ \frac{-\eps\pm\ri\sqrt{2k+4Kl_{0}^{2}-\eps^{2}}}{2m},\frac{-\eps\pm\ri\sqrt{2k+12Kl_{0}^{2}-\eps^{2}}}{2m}\right\} ,
\end{equation}
with the $(q_{1},p_{1})$-plane and the $(q_{2},p_{2})$-plane as
corresponding eigenspaces for any $\eps\geq0$.\\
 Assumption \ref{AssLSC}/(3) is therefore satisfied for both pairs of complex conjugate
eigenvalues and their corresponding eigenspaces for $\eps=0$.
To ensure the existence of LSMs for these eigenspaces, we also require
that 
\begin{equation}
\frac{k+2l_{0}^{2}K}{k+6l_{0}^{2}K}\notin\integer,
\end{equation}
which holds for a generic choice of $k$ and $K$, assuming that $l_{0}$
is chosen according to the normalization of the Hamiltonian. We will
now construct a second-order polynomial approximation to the invariant
manifold tangent to the $(q_{1},p_{1})$-plane and its corresponding
polynomial dynamics, which are guaranteed to exists by Theorem \ref{mainThm}.\\
To this end, we expand
$w:U\subset\real^{2}\times[0,\eps_{0}]\to\real^{2}$,
for some $\eps_{0}>0$, up to order two
\begin{equation}
w(\eps,q_{1},p_{1})=w_{20}(\eps)q_{1}^{2}+w_{11}(\eps)q_{1}p_{1}+w_{02}(\eps)p_{1}^{2}+\mathcal{O}((|q|+|p|)^{3}),\label{ansatz1}
\end{equation}
for $\eps\mapsto
w_{20}(\eps),w_{11}(\eps),w_{02}(\eps)\in\real^{2}$,
and assume that
$(q_{2}(t),p_{2}(t))=w(\eps,q_{1}(t),q_{2}(t))$.  Substituting
the ansatz (\ref{ansatz1}) into equation \eqref{pend} and solving for
powers in $q_{1}$ and $p_{1}$, we obtain
\begin{equation}
\begin{split} & w_{20}(\eps)=\kappa_{\eps}\left(\begin{array}{c}
-2Kl_{0}(-\eps^{2}(k+6Kl_{0}^{2})-6k^{2}m+8kKl_{0}^{2}m+8K^{2}l_{0}^{4}m)\\
-16\eps Kl_{0}(k+2Kl_{0}^{2})^{2}m
\end{array}\right),\\[0.3cm]
 & w_{11}(\eps)=\kappa_{\eps}\left(\begin{array}{c}
16\eps Kl_{0}(k+2Kl_{0}^{2})\\
-4Kl_{0}(\eps^{2}(k-2Kl_{0}^{2})+2(3k^{2}+20kKl_{0}^{2}+12K^{2}l_{0}^{4})m)
\end{array}\right),\\[0.3cm]
 & w_{02}(\eps)=\kappa_{\eps}\left(\begin{array}{c}
8Kl_{0}(3k+2Kl_{0}^{2})\\
8\eps Kl_{0}(-k+2Kl_{0}^{2})
\end{array}\right),
\end{split}
\label{wij}
\end{equation}
where 
\begin{equation}
\kappa_{\eps}=\frac{1}{(k+6Kl_{0}^{2})(-\eps^{2}(k-2Kl_{0}^{2})+2(3k+2Kl_{0}^{2})^{2}m)}.
\end{equation}
The approximate dynamics on the SSM (i.e., perturbed LSM) are given
by 
\begin{equation}
\left(\begin{array}{c}
\dot{x}\\
\dot{y}
\end{array}\right)=\left(\begin{array}{c}
\frac{2}{m}y\\
-2(k+2Kl_{0}^{2})x-\frac{\eps y}{m}+4K\kappa_{\eps}^{2}x\left(x^{2}+l_{0}^{2}(\alpha_{\eps}+\beta_{\eps}xy+\gamma_{\eps}x^{2}+\delta_{\eps}y^{2})^{2}\right)
\end{array}\right),\label{red1}
\end{equation}
where 
\begin{equation}
\begin{split} & \alpha_{\eps}:=-(k+6Kl_{0}^{2})(-\eps^{2}(k-2Kl_{0}^{2})+2(3k+2Kl_{0}^{2})^{2}m),\\
 & \beta_{\eps}:=16\eps K(k+2Kl_{0}^{2}),\\
 & \gamma_{\eps}:=2K(\eps^{2}(k+6Kl_{0}^{2})+2(3k^{2}-4kKl_{0}^{2}-4K^{2}l_{0}^{4})m),\\
 & \delta_{\eps}:=8K(3k+2Kl_{0}^{2}).
\end{split}
\end{equation}
Typical phase portraits for the unperturbed and for the perturbed
system (\ref{red1}) are depicted in Figures \ref{Red1Fig} and \ref{Red1epsFig}.

\begin{figure}[h]
	\begin{subfigure}{.5\textwidth}
		\centering
		\includegraphics[width=.9\linewidth]{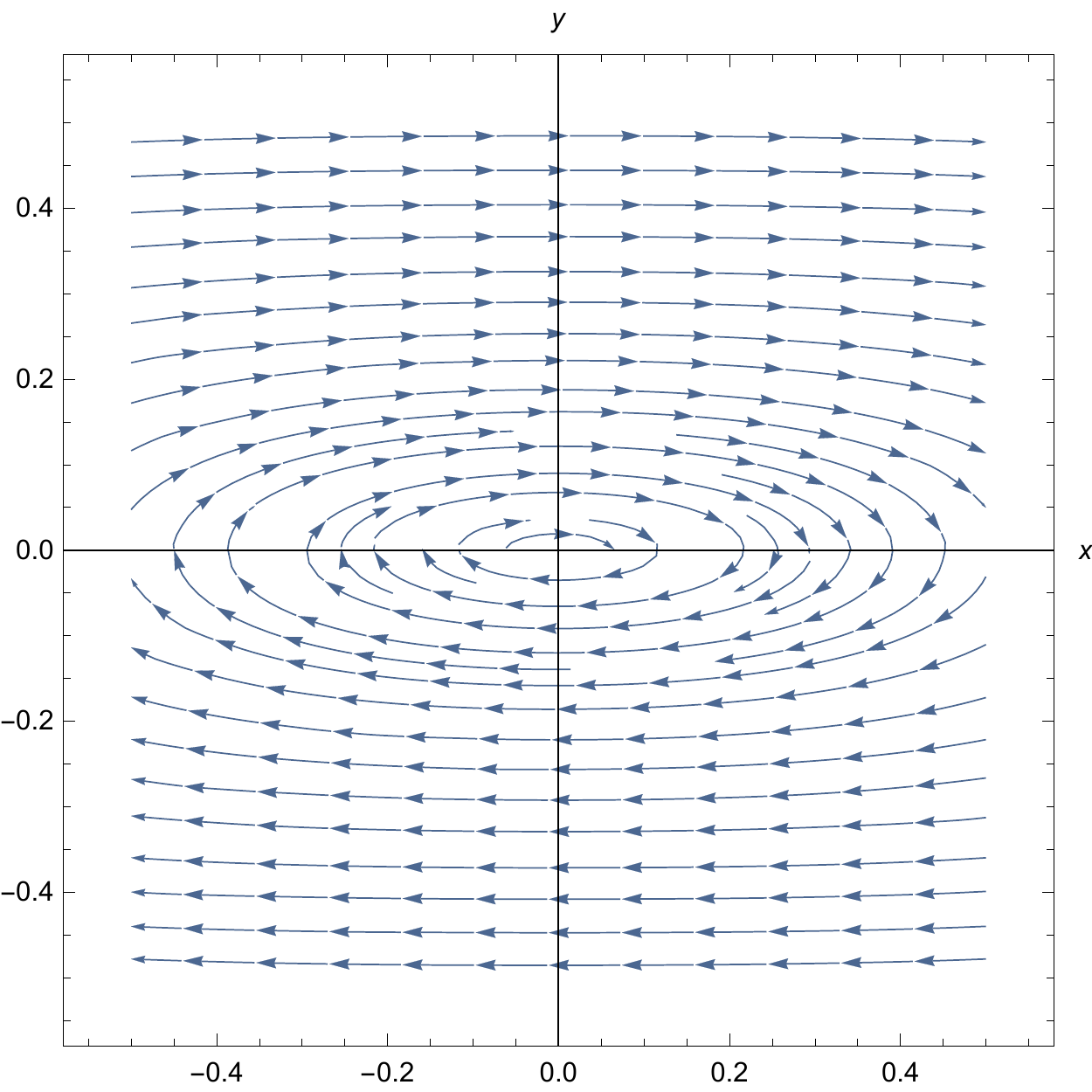}
		\caption{The unperturbed dynamics on the LSM for\\ $l_0=1$, $m=0.1$, $k=0.1414$ and $K=0.4$. }
		\label{Red1Fig}
	\end{subfigure}%
	\begin{subfigure}{.5\textwidth}
		\centering
		\includegraphics[width=.9\linewidth]{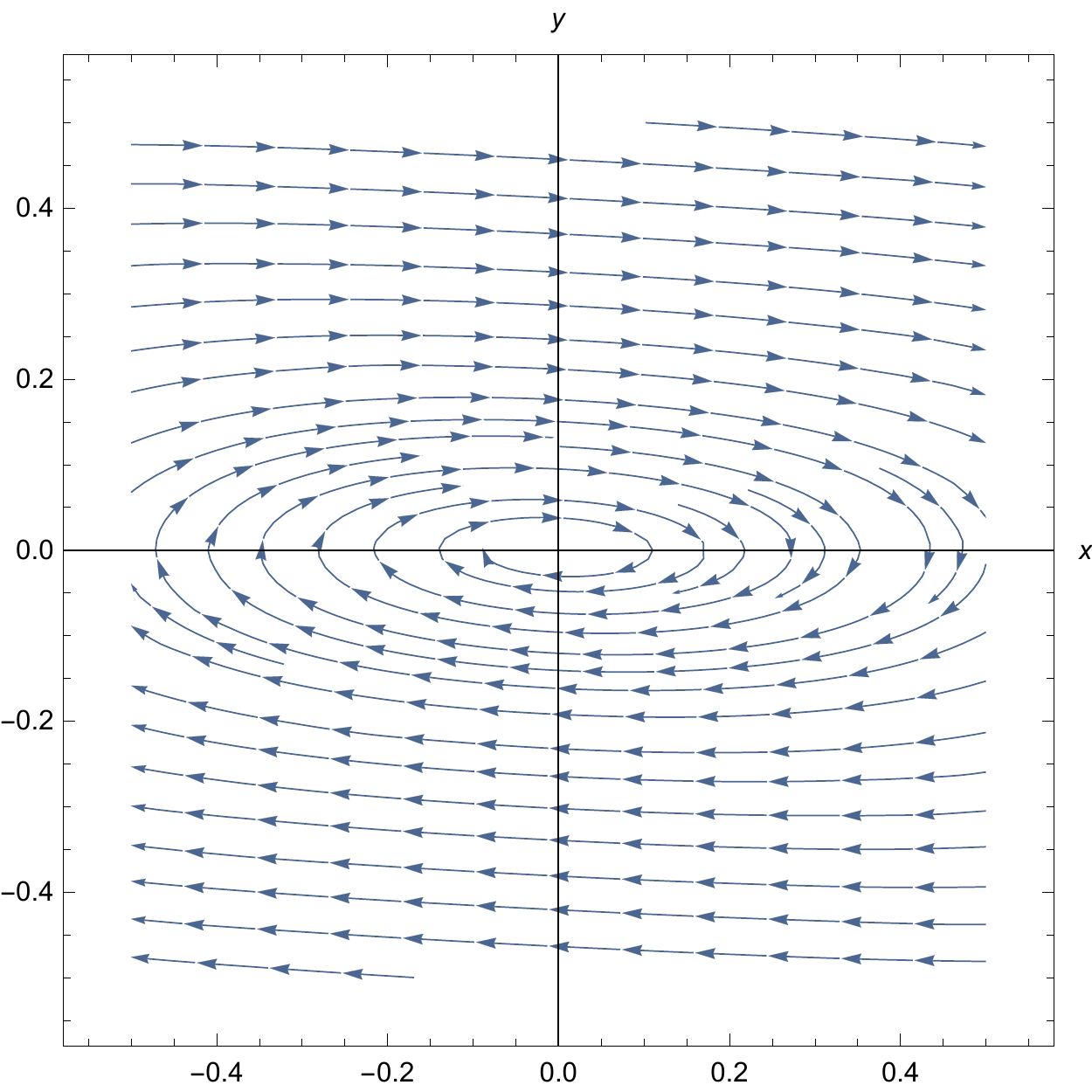}
		\caption{The perturbed dynamics on the SSM for $l_0=1$,\\ $m=0.1$, $k=0.1414$, $K=0.4$ and $\eps=0.1$.}
		\label{Red1epsFig}
	\end{subfigure}
\end{figure}

\end{example}

\subsection{Conclusion and Further Perspectives}
We have proved that, adding dissipation (satisfying some mild non-degeneracy conditions) to a Hamiltonian system, an analytic, two-dimensional Lyapunov Subcenter Manifold (LSM) perturbs to an analytic spectral submanifold (SSM) in a neighborhood of size or order one.\\
As a consequence, the corresponding reduced dynamics on the SSM are $\eps$-close to the dynamics of the LSM  in a neighborhood of size one. We have also illustrated our results on several examples.\\
It would be useful to extend the present results to time-periodic or quasi-periodic perturbations and thereby relate experimentally observed backbone curves under sinusoidal excitation to the backbone curve of the conservative, unforced limit of the system. We refer to \cite{breunung2018explicit} for theoretical and numerical discussion of backbone curve calculations based on the SSM technique for damped systems. Of course, periodic perturbations lead to new phenomena such as resonances that lead to different behaviors and will require new formulations of results.\\
In view of Example~\ref{firstexample} it may also be interesting to study the possibility of existence of $C^\ell$ manifolds of size $\O(1)$ with $\ell < d$. Indeed, in many cases, it has been observed that the manifolds that guide the convergence to equilibrium are of low regularity \cite{MaasP92, Smooke91, WarnatzMD96, GorbanK05}.\\
We also think it would be interesting to consider results in PDE adding dissipation to the results in \cite{Bambusi00} or in systems with symmetry \cite{BuonoLM05}, keeping in mind that the dissipation may also break some symmetries.\\
It seems that the analyticity domains in the dissipation established here can be improved to parabolic domains $\{\eps \in \complex \, \| |\Im(\eps)|  \le  B \Re(\eps)^2\}$. It would be interesting to characterize the optimal analyticity domains and in particular, show that they do not contain a circle centered at the origin.\\
We also refer to upcoming results by Szalai \cite{Szalai2018conservative}, in which a perturbation theory based on different techniques has been announced.

\section*{Acknowledgments} 
We thank George Haller for formulating the problem and suggesting it to us as well as for many conversations and continued interest and encouragement. We also thank Robert Szalai for conversations and for pointing out several mistakes in a previous version.\\
We would also like to thank the anonymous reviewers for several useful comments and suggestions.
\bibliographystyle{abbrv}
\bibliography{DynamicalSystems.bib}

\end{document}